\theoremstyle{plain}
\newtheorem{theorem}{Theorem}
\newtheorem{corollary}[theorem]{Corollary}
\newtheorem{lemma}[theorem]{Lemma}
\newtheorem{axiom}{Axiom}
\newtheorem{definition}{Definition}
\author{Wim Veldman}
\address{Institute for Mathematics, Astrophysics and Particle Physics, Faculty of Science, Radboud University Nijmegen,
Postbus 9010, 6500 GL Nijmegen, the Netherlands}
\email{W.Veldman@science.ru.nl}
\title{Equality and equivalence, intuitionistically}
\dedicatory{For Mohammad Ardeshir, in friendship\\Solem enim e mundo tollere videntur qui amicitiam e vita tollunt.\\They take away the sun from the world, surely, those who take away friendship from life.\\\flushright Cicero, \emph{de Amicitia, XIII 47}}
\begin{document}

 \begin{abstract}We show that the intuitionistic first-order theory of equality has continuum many complete extensions. We also study the Vitali equivalence relation and show there are many intuitionistically precise versions of it. \end{abstract}
 
 \maketitle
 \section{Introduction}
 
 We want to contribute to L.E.J.~Brouwer's program of doing mathematics \\ \textit{intuitionistically}. 
 
 \medskip
 We follow his advice to interpret the logical constants constructively. 
 
 A conjunction $A\;\wedge \;B$ is considered proven if and only if one has a proof of $A$ and also a proof of $B$.
 
 A disjunction $A\vee B$ is considered proven if and only if either $A$ or $B$ is proven.
 
 An implication $A\rightarrow B$ is considered proven if and only if there is a proof of $B$ using the assumption $A$. 
 
 A negation $\neg A$ is considered proven if and only if there is a proof of $A\rightarrow 0=1$.
 
 An existential statement $\exists x \in V[P(x)]$ is considered proven if and only an element $x_0$ is produced together with a proof of the associated statement $P(x_0)$. 
 
 A universal statement $\forall x\in V[P(x)]$ is considered proven if and only if a method is given that produces, given any $x$ in $V$, a proof of the associated statement $P(x)$.

 \medskip
 We also use some axioms  proposed by Brouwer: his \textit{Continuity Principle}, our Axiom \ref{ax:bcp}, a slightly stronger version of it, the \textit{First Axiom of Continuous Choice}, our Axiom \ref{ax:fcc}, and his \textit{Thesis on Bars in $\mathcal{N}$}, our Axiom \ref{A:barinduction}. 
 
 In some of our proofs, we use an  Axiom of Countable Choice, our Axiom \ref{ax:countable choice}.  Intuitionistic mathematicians, who accept infinite step-by-step constructions not determined by a rule, consider  this axiom a reasonable proposal.
 
 Finally, we believe that generalized inductive definitions, like our Definition \ref{D:perhapsiveextvitali}, fall within the compass of intuitionistic mathematics. 
 
 \medskip
Our subject is the (intuitionistic) first-order theory of equality.  By considering structures $(\mathcal{X}, =)$ where $\mathcal{X}$ is a subset of Baire space $\mathcal{N}=\omega^\omega$ and $=$ the usual equality relation on $\mathcal{N}$, we find that the theory has an uncountable and therefore astonishing\footnote{Classically, all infinite models of the first-order theory of equality are elementarily equivalent.} variety of elementarily different infinite models and, as a consequence, an astonishing variety of complete extensions, see Theorem \ref{T:uncountablymany}. The key observation\footnote{This observation has been made earlier in \cite[Section 5]{veldman2001}. The first part of the present paper elaborates part of \cite[Section 5]{veldman2001}.} leading to this result is the recognition that, in a \textit{spread}\footnote{Every  \textit{spread} is a closed subset of $\mathcal{N}$, see Section \ref{S:spreads}.}, an \textit{isolated} point is  the same as a \textit{decidable} point.\footnote{See Lemma \ref{L:defisol}. $\alpha\in \mathcal{X}\subseteq\mathcal{N}$ is a \textit{decidable} point of $\mathcal{X}$ if and only if $\forall \beta \in \mathcal{X}[\alpha=\beta\;\vee\;\neg(\alpha=\beta)]$.} It follows that the set of the non-isolated points of a spread is a definable subset of the  spread. In spreads that are \textit{transparent}\footnote{see Definition \ref{D:transparent}.}, the set of the non-isolated points of the spread coincides with the \textit{coherence} of the spread\footnote{The \textit{coherence} of a closed set is the set of its limit points, see Definition \ref{D:limitpoint}.},  and the coherence itself is spread.  It  may happen that the coherence of a transparent spread is transparent itself and then the  coherence of the coherence also is a definable subset of the spread.  And so on. 

\smallskip Any structure $(\mathcal{N}, R)$, where $R$ is an equivalence relation on $\mathcal{N}$, is a model of the theory of equality. We study the \textit{Vitali equivalence relation}, see Section \ref{S:vitali}, as an example.   This equivalence relation, in contrast to the equality relation on $\mathcal{N}$, is not \textit{stable}\footnote{$R\subseteq \mathcal{N}\times\mathcal{N}$ is called \textit{stable} if $\forall \alpha\forall\beta[\neg\neg\alpha R\beta \rightarrow \alpha R\beta]$, see Definition \ref{D:stable}.}, see Theorem \ref{T:vitaliunstable}. 

There is a host of binary relations on $\mathcal{N}$ that, from a classical point of view, all would be the same as the Vitali equivalence relation, see Sections \ref{S:vitalivar} and \ref{S:morevitali}, and especially Definition \ref{D:perhapsiveextvitali},  Corollary \ref{C:hierarchy} and Definition \ref{D:almost}.   It turned out to be difficult to find differences between them that are first-order expressible. We did find some such differences, however, by studying structures $(\mathcal{N}, =, R)$, where $R$ is an intuitionistic version of the  Vitali equivalence relation and $=$ the usual equality, see Section \ref{S: equequiv}. 

\medskip The paper is divided into 13 Sections and consists roughly of two parts. Sections 2-8 lead up to the result that the theory of equality has continuum many complete extensions, see Theorem  \ref{T:uncountablymany}. Sections 9-12 treat the Vitali equivalence relations. Section 13 lists some notations and conventions and may be used by the reader as a reference.  
 \section{Intuitionistic model theory} 
 
  Given a  relational structure $\mathfrak{A}=(A, R_0, R_1, \ldots, R_{n-1})$, we  construct a first-order language $\mathcal{L}$ with basic formulas $\mathsf{R}_i(\mathsf{x}_0, \mathsf{x}_1,\ldots, \mathsf{x}_{l_i-1})$, where $i<n$ and $l_i$ is the arity of $R_i$. The formulas of $\mathcal{L}$ are  obtained from the basic formulas by using $\wedge, \vee, \rightarrow, \neg, \exists, \forall$ in the usual way. 
 
 For every formula $\varphi=\varphi(\mathsf{x}_0, \mathsf{x}_1, \ldots, \mathsf{x}_{m-1})$ of $\mathcal{L}$, for all $a_0, a_1, \ldots, a_{m-1}$ in $A$, we define the statement:
$$\mathfrak{A}\models \varphi[a_0, a_1, \ldots, a_{m-1}]$$
 (\textit{$\mathfrak{A}$ realizes $\varphi$ if $\mathsf{x}_0, \mathsf{x}_1, \ldots, \mathsf{x}_{m-1}$ are interpreted by $a_0, a_1, \ldots, a_{m-1}$, respectively}), as Tarski did it, with the proviso that connectives and quantifiers are interpreted intuitionistically.
 
  A formula $\varphi$ of $\mathcal{L}$ without free variables will be called a \textit{sentence}.
  
  A \textit{theory (in $\mathcal{L}$)} is a set of sentences of $\mathcal{L}$. 
  
  Given  a theory $\Gamma$ in $\mathcal{L}$ and a structure $\mathfrak{A}$, we define: $\mathfrak{A}$ \textit{realizes} $\Gamma$ if and only if, for every $\varphi$ in $\Gamma$, $\mathfrak{A}\models \varphi$.
  
  Given a structure $\mathfrak{B}$ that has the same signature as $\mathfrak{A}$, so that the formulas of $\mathcal{L}$ may be interpreted in $\mathfrak{B}$ as well as in $\mathfrak{A}$, we let  $Th(\mathfrak{B})$, the \textit{theory of $\mathfrak{B}$},  be the set of all sentences $\varphi$ of $\mathcal{L}$ such that $\mathfrak{B}\models \varphi$. 
  
  A theory $\Gamma$ in $\mathcal{L}$ will be called a \textit{complete theory} if and only if there exists a structure $\mathfrak{B}$ such that $\Gamma=Th(\mathfrak{B})$. 
  
  This agrees with one of the uses of the expression `\textit{complete theory}' in classical, that is: usual, non-intuitionistic, model theory, see \cite[p. 43]{hodges}. Note that one may be unable to decide, for a given sentence $\varphi$ and a given structure $\mathfrak{B}$, whether or not $\mathfrak{B}\models \varphi$. Intuitionistically,  it is not true that, for every complete theory $\Gamma$ and every sentence $\varphi$,  \textit{either} $\varphi\in \Gamma$ \textit{or} $\neg\varphi\in\Gamma$.

 \smallskip Complete theories $\Gamma, \Delta$ are \textit{positively different} if one may point out a sentence $\psi$ such that $\psi \in \Gamma$ and 
 $\neg\psi \in \Delta$.\footnote{
 If $\psi \in \Gamma$ and 
 $\neg\psi \in \Delta$, then $\neg \psi \in \Delta$ and $\neg\neg\psi \in \Gamma$: the relation \textit{positively different} is symmetric.}
 
 \smallskip Structures $\mathfrak{A}, \mathfrak{B}$ are \textit{elementarily equivalent} if and only if $Th(\mathfrak{A})=Th(\mathfrak{B})$ and \textit{(positively) elementarily different} if $Th(\mathfrak{A})$ is positively different from $ Th(\mathfrak{B})$.

 \smallskip
 Let $\Gamma$ be a theory in $\mathcal{L}$. A good question is the following:
 
 \begin{quote} \textit{How many complete theories $\Delta$ can one find extending $\Gamma$?}\end{quote}
 
 We will say: \textit{$\Gamma$ admits countably many complete extensions} if and only if there exists an infinite sequence $\Delta_0, \Delta_1, \ldots$ of complete theories extending $\Gamma$ such that, for all $m,n$, if $m\neq n$, then $\Delta_m,\Delta_n$ are (positively) different, and
 
 \textit{$\Gamma$ admits continuum many complete extensions} if and only if there exists a function $\alpha\mapsto \Delta_\alpha$ associating to every element $\alpha$ of $\mathcal{C}=2^\omega$ a complete theory extending $\Gamma$ such that  for all $\alpha, \beta$, if\footnote{$\alpha\;\#\;\beta\leftrightarrow\alpha\perp\beta\leftrightarrow \exists n[\alpha(n)\neq\beta(n)]$, see Section \ref{S:notations}.} $\alpha\;\#\;\beta$, then $\Delta_\alpha,\Delta_\beta$ are (positively) different.
 
 \smallskip
 A main result of this paper is that the first-order theory of equality admits continuum many complete extensions.

\section{Equality may be undecidable}The first-order theory $EQ$ of equality consists of the following three axioms: \begin{enumerate} \item $\mathsf{\forall x[x=x]}$, \item $\mathsf{\forall x\forall y[x=y\rightarrow y=x]}$ and \item $\mathsf{\forall x\forall y\forall z[(x=y\;\wedge\;y=z)\rightarrow x=z]}$.
\end{enumerate}

A model of $EQ$ is a structure of the form $(V,R)$, where $V$ is a set and $R$ is an equivalence relation on $V$, possibly, but not necessarily, the  equality relation belonging to $V$.  

\medskip

Classically, every  complete extension of $EQ$ is realized in one of the structures from the list: $(\{0\},=)$, $(\{0,1\},=)$, $(\{0,1,2\},=)$, $\dots$ and   $(\omega,=)$. This shows that, classically, $EQ$ admits of (no more than) countably many complete extensions.

\smallskip

Intuitionistically, however, we have to observe that all structures on this  list satisfy the sentence

\begin{quote}$ \mathsf{\forall x\forall y[x=y\;\vee\;\neg(x=y)]}$, \end{quote}
that is: the equality relation, on each of these sets, is a \textit{decidable} relation. 

\medskip
It is well-known, however,  that the equality relation on the set $\mathcal{N}$ is \textit{not} a decidable relation. Let us recall why.

If we define an element $\alpha$ of $\mathcal{N}$ by stipulating: $$\forall n[\alpha(n)\neq 0\leftrightarrow \forall i<99[d(n+i)=9]],$$ where  $d:\mathbb{N}\rightarrow \{0,1,\ldots,9\}$ is the decimal expansion of $\pi$, we are unable to decide: $\alpha=\underline 0\;\vee\neg(\alpha =\underline 0)$. 

This is because, if $\alpha =\underline 0$, then $\neg\exists n\forall i<99[\alpha(n+i)=9]$, and, if $\neg(\alpha=\underline 0)$, then \\$\neg\neg\exists n\forall i<99[d(n+i)=9]$, and  we have no proof of either alternative. 

This example shows us that the statement $\forall \alpha[\alpha =\underline 0\;\vee\;\neg(\alpha =\underline 0)]$, for a constructive mathematician, who interprets the disjunction strongly, is  a \textit{reckless} statement.\footnote{A statement is \textit{reckless} if one might think it is true while the intuitionistic mathematician understands there is no proof for his constructive reading of it.}  

The following axiom, used by Brouwer\footnote{see \cite{veldman2001}.}, implies that the statement  \\$\forall \alpha[\alpha =\underline 0\;\vee\;\neg(\alpha =\underline 0)]$ even leads to a contradiction.

\begin{axiom}[Brouwer's Continuity Principle]\label{ax:bcp} $\;\;$\\ For all $R\subseteq \mathcal{N}\times\omega$, if $\forall \alpha\exists n[\alpha Rn]$, then $\forall \alpha\exists m \exists n\forall \beta[\overline\alpha m \sqsubset\beta\rightarrow \beta Rn]$. \end{axiom}

An immediate consequence is:\begin{lemma}[Brouwer's Continuity Principle, the case of disjunction]\label{L:bcpdisj} $\;\;$ \\For all $P_0, P_1\subseteq \mathcal{N}$, if $\forall \alpha [\alpha\in P_0\;\vee\;\alpha \in P_1]$, then \\$\forall \alpha\exists m[\forall \beta[\overline \alpha m\sqsubset \beta\rightarrow \beta \in P_0]\;\vee\; \forall \beta[\overline \alpha m\sqsubset \beta\rightarrow \beta \in P_1]]$. \end{lemma}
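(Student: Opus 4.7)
The plan is to reduce the disjunction case to the general form of Brouwer's Continuity Principle by coding the two alternatives into values of an auxiliary relation on $\mathcal{N}\times\omega$. Concretely, assume $\forall\alpha[\alpha\in P_0\vee\alpha\in P_1]$, and define a relation $R\subseteq\mathcal{N}\times\omega$ by stipulating, for every $\alpha$ and every $n$, that $\alpha R n$ holds if and only if either $n=0$ and $\alpha\in P_0$, or $n\neq 0$ and $\alpha\in P_1$. Since equality on $\omega$ is decidable, this definition is unproblematic.

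Next I would verify the hypothesis of Axiom \ref{ax:bcp} for this $R$. Given $\alpha$, the disjunction hypothesis hands us, constructively, either a proof that $\alpha\in P_0$, in which case we let $n=0$ and note $\alpha R 0$, or a proof that $\alpha\in P_1$, in which case we let $n=1$ and note $\alpha R 1$. Either way we produce a witness, so $\forall\alpha\exists n[\alpha R n]$.

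Applying Axiom \ref{ax:bcp}, for any $\alpha$ we obtain $m$ and $n$ such that $\forall\beta[\overline\alpha m\sqsubset\beta\rightarrow\beta R n]$. Now we use decidability of $n=0$ on $\omega$: either $n=0$, in which case the choice of $R$ gives $\forall\beta[\overline\alpha m\sqsubset\beta\rightarrow\beta\in P_0]$, or $n\neq 0$, in which case $\forall\beta[\overline\alpha m\sqsubset\beta\rightarrow\beta\in P_1]$. In either case we have produced the $m$ required by the conclusion together with a witness to the displayed disjunction.

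There is no real obstacle here; the step that deserves attention is simply making sure the coding is set up so that, after BCP has been applied and a specific $n$ is in hand, the decidable case split $n=0$ versus $n\neq 0$ recovers the original disjunction \emph{intuitionistically} rather than requiring any non-constructive choice. The definition above achieves this by a clean partition of $\omega$ into the decidable pieces $\{0\}$ and $\{n\mid n\neq 0\}$.
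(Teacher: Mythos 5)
Your proof is correct and follows essentially the same route as the paper, which simply defines $R:=\{(\alpha,n)\mid \alpha\in P_n\}$ and applies Axiom \ref{ax:bcp}; your variant of letting every $n\neq 0$ code membership in $P_1$ changes nothing essential. The extra care you take in spelling out that the constructive reading of the hypothesis yields a witness $n$ and that the final case split on $n$ is decidable is exactly what the paper leaves implicit.
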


\begin{proof} Define $R:=\{(\alpha, n)\mid \alpha \in P_n]$ and apply Axiom \ref{ax:bcp}. \end{proof}

\begin{theorem} \begin{enumerate}[\upshape (i)] \item $(\mathcal{N}, =)\models \mathsf{\forall x\neg\forall y[x=y\;\vee\;\neg(x=y)]}$. \item $(\mathcal{N}, =)\models \mathsf{\neg\forall x\forall y[x=y\;\vee\;\neg(x=y)]}$. \end{enumerate} \end{theorem}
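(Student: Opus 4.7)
The plan is to derive both statements from Brouwer's Continuity Principle in its disjunctive form, Lemma \ref{L:bcpdisj}, essentially by upgrading the $\pi$-example sketched in the paragraphs above into a formal refutation.

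For part (i), I fix an arbitrary $\alpha\in\mathcal{N}$ and assume, towards a contradiction, that $\forall\beta\in\mathcal{N}[\alpha=\beta\;\vee\;\neg(\alpha=\beta)]$. Setting $P_0:=\{\beta\mid\alpha=\beta\}$ and $P_1:=\{\beta\mid\neg(\alpha=\beta)\}$, the assumption says $\forall\beta[\beta\in P_0\;\vee\;\beta\in P_1]$, so Lemma \ref{L:bcpdisj} applies. Instantiating it at the point $\alpha$ itself, I obtain a number $m$ such that \emph{either} every $\beta$ extending $\overline\alpha m$ satisfies $\alpha=\beta$, \emph{or} every such $\beta$ satisfies $\neg(\alpha=\beta)$. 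In the first case, I produce a counterexample by defining $\beta$ to agree with $\alpha$ on the first $m$ values and to differ from $\alpha$ at position $m$ (e.g.\ $\beta(m):=\alpha(m)+1$): then $\overline\alpha m\sqsubset\beta$ while evidently $\neg(\alpha=\beta)$, contradicting $\alpha=\beta$. In the second case, I simply take $\beta:=\alpha$: then $\overline\alpha m\sqsubset\alpha$, and so the conclusion would give $\neg(\alpha=\alpha)$, contradicting reflexivity. Either way we reach a contradiction, so $\neg\forall\beta[\alpha=\beta\;\vee\;\neg(\alpha=\beta)]$, which is what (i) asserts.

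For part (ii), the statement follows from (i) by pure logic once we note that $\mathcal{N}$ is inhabited: take any $\alpha\in\mathcal{N}$ (e.g.\ $\underline 0$) and assume, for contradiction, $\forall x\forall y[x=y\;\vee\;\neg(x=y)]$. Instantiating $x$ with $\alpha$ yields $\forall y[\alpha=y\;\vee\;\neg(\alpha=y)]$, which is exactly what (i) has just refuted.

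I do not expect a real obstacle here: the only subtlety is the constructive bookkeeping in the two cases produced by Lemma \ref{L:bcpdisj}. The crucial point to stress in the write-up is that both cases are refuted \emph{without} appealing to decidability of equality; the first case uses only that one can concretely modify $\alpha$ past position $m$, and the second case uses only the reflexivity axiom $\forall x[x=x]$ of $EQ$.
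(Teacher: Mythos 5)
Your proof is correct and is essentially identical to the paper's: the same application of Lemma \ref{L:bcpdisj} at the point $\alpha$, the same counterexample $\beta=\overline\alpha m\ast\langle\alpha(m)+1\rangle\ast\underline 0$ for the first alternative and $\beta=\alpha$ for the second, and (ii) deduced from (i) by instantiation (the paper calls it an immediate consequence; your remark that inhabitedness of $\mathcal{N}$ is what makes the instantiation possible is a correct, if minor, addition).
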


\begin{proof} (i) Let $\alpha$ be given and assume: $\forall \beta[\alpha=\beta\;\vee\;\neg(\alpha=\beta)]$.\\ Using Lemma \ref{L:bcpdisj}, find $m$ such that \\\textit{either} $\forall \beta[\overline \alpha m\sqsubset\beta \rightarrow \alpha = \beta]$ \textit{or} $\forall \beta[\overline \alpha m\sqsubset\beta \rightarrow \neg(\alpha = \beta)]$. 
\\Consider $\beta:=\overline\alpha m\ast\langle \alpha(m)+1\rangle\ast\underline 0$ (for the first alternative) and $\beta:=\alpha$ (for the second one)  and conclude that both alternatives are false.

\smallskip
 
(ii) This is an immediate consequence of (i). \end{proof}

\begin{definition}\label{D:Tinf}$\;$

 For each $n$,  we let  $\psi_n$ be the sentence $\mathsf{\exists x_0\exists x_1\ldots \exists x_n}[\bigwedge_{i<j<n}\neg (\mathsf{x}_i=\mathsf{x}_j)]$. 

 $T_{inf}:=EQ\cup\{\psi_n\mid n\in \omega\}$.\end{definition}
$\psi_n$ expresses that a set has at least $n+1$ elements.

Note that, in classical mathematics, $T_{inf}$ has only one complete extension.  

Intuitionistically, however, $T_{inf}$ has  (at least) two positively different complete extensions, $Th\bigl((\mathcal{N},=)\bigr)$ and $Th\bigl((\omega, =)\bigr)$.

The next Theorem reflects the fact that, in classical model theory, all  models of $T_{inf}$ are elementarily equivalent. 

\begin{theorem}\label{T:classicalcompleteness} The theory $T_{inf}\cup \{\forall\mathsf{x\forall y[x=y\;\vee\;\neg(x=y)]}\}$ has only one complete extension. \end{theorem}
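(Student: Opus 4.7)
The plan is to prove that any two models $\mathfrak{A}, \mathfrak{B}$ of $T_{inf}\cup\{\forall\mathsf{x\forall y[x=y\;\vee\;\neg(x=y)]}\}$ are elementarily equivalent, via a constructive back-and-forth argument. The main technical claim, proved by induction on the complexity of formulas of $\mathcal{L}$, is: for every formula $\varphi(\mathsf{x}_0,\ldots,\mathsf{x}_{n-1})$ and all tuples $(a_0,\ldots,a_{n-1})$ in $\mathfrak{A}$ and $(b_0,\ldots,b_{n-1})$ in $\mathfrak{B}$ satisfying $a_i=a_j\leftrightarrow b_i=b_j$ for all $i,j<n$, one has $\mathfrak{A}\models \varphi[\vec a]$ if and only if $\mathfrak{B}\models \varphi[\vec b]$. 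Taking $n=0$ yields $Th(\mathfrak{A})=Th(\mathfrak{B})$. The atomic case is exactly the hypothesis, and the propositional connectives $\wedge,\vee,\rightarrow,\neg$ pass through routinely under the BHK reading of the induction hypothesis.

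The essential case is the quantifier step, and since the claim is an iff, genuine back-and-forth is required. For the forward direction of $\exists\mathsf{x}_n\varphi$, suppose $\mathfrak{A}\models \exists\mathsf{x}_n\varphi[\vec a]$ via some $a_n\in \mathfrak{A}$. Using decidable equality in $\mathfrak{A}$, I decide for each $i<n$ whether $a_n=a_i$. If $a_n=a_i$ for some $i$, set $b_n:=b_i$. Otherwise $\neg(a_n=a_i)$ for all $i<n$; I then apply $\psi_{n+1}$ in $\mathfrak{B}$ to produce $n+2$ pairwise distinct elements $c_0,\ldots,c_{n+1}$, and use decidable equality in $\mathfrak{B}$ together with a simple counting argument---the $c$'s being pairwise distinct, at most $n$ of them can coincide with some $b_i$---to find a $c_k$ distinct from every $b_i$, setting $b_n:=c_k$. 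The extended tuples then satisfy the matching equality condition, so by induction $\mathfrak{B}\models \varphi[\vec b, b_n]$, hence $\mathfrak{B}\models \exists\mathsf{x}_n\varphi[\vec b]$. The reverse direction swaps the roles of the two models, and $\forall$ is handled dually.

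The main obstacle is precisely the constructive production of the fresh witness $b_n$ in the ``new'' subcase. This is where the two ingredients of the extended theory become essential: decidable equality permits the case-split on whether $a_n$ collapses onto an earlier $a_i$, and the axioms $\psi_n$ guarantee that each model has enough room to supply a genuinely fresh element whenever one is required.
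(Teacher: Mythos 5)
Your proof is correct and follows essentially the same route as the paper's: an induction on the complexity of $\varphi$ with the invariant that tuples realizing the same equality type satisfy the same formulas (the paper states this invariant syntactically, as the transfer of the sentences $\forall\mathsf{x}_0\ldots\forall\mathsf{x}_{n-1}[\varepsilon\rightarrow\varphi]$ for each equality type $\varepsilon$). The paper leaves the quantifier step to the reader; your use of decidable equality to case-split on whether the witness collapses onto an earlier coordinate, and of $\psi_{n+1}$ plus a counting argument to produce a fresh witness, is exactly the detail needed there.
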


\begin{proof} For each $n$,  consider the first $n$ variables of  our language: $\mathsf{x}_0, \mathsf{x}_1, \ldots ,\mathsf{x}_{n-1}$.  A formula $\varepsilon=\varepsilon(\mathsf{x}_0, \mathsf{x}_1, \ldots ,\mathsf{x}_{n-1})$ is called an \textit{equality type} if and only if it is of the form $\bigwedge_{i<j<n} \sigma_{ij}$ where each $\sigma_{ij}$ either is the formula $\mathsf{x}_i=\mathsf{x}_j$ or the formula $\neg(\mathsf{x}_i=\mathsf{x}_j)$.\footnote{\textit{Inconsistent}  equality types may be annoying but do not cause difficulties.}
One may prove: for all structures $(V_0, R_0), (V_1, R_1)$, both realizing $T_{inf}\cup \{\forall\mathsf{x\forall y[x=y\;\vee\;\neg(x=y)]}\}$, for each formula $\varphi=\varphi(\mathsf{x}_0, \mathsf{x}_1, \ldots ,\mathsf{x}_{n-1})$, for each equality type $\varepsilon=\varepsilon(\mathsf{x}_0, \mathsf{x}_1, \ldots ,\mathsf{x}_{n-1})$, $(V_0, R_0)\models \forall \mathsf{x}_0\forall\mathsf{x}_1 \ldots\forall\mathsf{x}_{n-1}[\varepsilon \rightarrow \varphi]$ if and only if  $(V_1, R_1)\models \forall \mathsf{x}_0\forall\mathsf{x}_1 \ldots\forall\mathsf{x}_{n-1}[\varepsilon \rightarrow \varphi]$. The proof is by induction on the complexity of the formula $\varphi$. 

It follows that any two models $(V_0, R_0), (V_1, R_1)$, both realizing \\$T_{inf}\cup \{\forall\mathsf{x\forall y[x=y\;\vee\;\neg(x=y)]}\}$, are elementarily equivalent. \end{proof}

From here on, we restrict attention to infinite models of $EQ$, that is, to models of $T_{inf}$. The hackneyed question to make a survey of models that are \textit{finite}, or at least \textit{not infinite}, and of models for which one can not decide if they are finite or infinite, is left for another occasion. That the job is not an easy one will be clear to readers of \cite{veldman1995}. 
\section{Spreads}\label{S:spreads}

\begin{definition}Let $\beta$ be given. $\beta$ is called a \emph{spread-law}, $Spr(\beta)$, if and only if
$\forall s[\beta(s)=0\leftrightarrow \exists n[\beta(s\ast\langle n \rangle)=0]]$.

For every $\beta$, we define: $\mathcal{F}_\beta:=\{\alpha\mid\forall n[\beta(\overline \alpha n)=0]\}$. 

$\mathcal{X}\subseteq \mathcal{N}$ is  \emph{closed}  if and only if $\exists \beta[\mathcal{X}=\mathcal{F}_\beta]$. 

$\mathcal{X}\subseteq \mathcal{N}$ is a \emph{spread}  if and only if $\exists \beta[Spr(\beta)\;\wedge\;\mathcal{X}=\mathcal{F}_\beta]$. \end{definition}

If $Spr(\beta)$ and $\beta(\langle \;\rangle)\neq 0$, then $\mathcal{F}_\beta=\emptyset$.

 If $Spr(\beta)$ and $\beta(\langle\;\rangle)=0$, then $\mathcal{F}_\beta$ is inhabited\footnote{$\mathcal{X}\subseteq\mathcal{N}$ is \textit{inhabited} if and only if $\exists \alpha[\alpha\in \mathcal{X}]$.}.  One may define $\alpha$ such that \\$\forall n[\alpha(n)=\mu p[\beta(\overline\alpha n\ast \langle p \rangle)=0]]$ and observe: $\forall n[\beta(\overline \alpha n)=0]$, that is: $\alpha \in \mathcal{F}_\beta$.

\medskip
Is every closed set  a spread?

Define $\beta$ such that $\forall s[\beta(s) =0\leftrightarrow \neg\forall i<99[d(n+i)=9]],$ where \\$d:\mathbb{N}\rightarrow \{0,1,\ldots, 9\}$ is the decimal expansion of $\pi$. 

If $\mathcal{F}_\beta$ is a spread, that is $\exists\gamma[Spr(\gamma)\;\wedge\;\mathcal{F}_\gamma=\mathcal{F}_\beta]$,  then \textit{either} $\mathcal{F}_\beta$ is inhabited and $\neg \exists s\forall i<99[d(s+i)=9]$ \textit{or} $\mathcal{F}_\beta = \emptyset$ and $\neg \neg\exists s\forall i<99[d(s+i)=9]$.

 For this $\beta$,  the statement `$\mathcal{F}_\beta$ is a spread' thus turns out to be reckless.
 
 \smallskip
 Brouwer's Continuity Principle  enables one to obtain a stronger conclusion.

\begin{theorem} $\neg\forall \beta\exists \gamma[Spr(\gamma)\;\wedge\;\mathcal{F}_\gamma=\mathcal{F}_\beta]$. \end{theorem}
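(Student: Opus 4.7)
The plan is to reduce to the previous theorem, which shows $(\mathcal{N},=)\models \forall \mathsf{x}\neg\forall \mathsf{y}[\mathsf{x}=\mathsf{y}\;\vee\;\neg(\mathsf{x}=\mathsf{y})]$, and in particular that $\forall \alpha[\alpha=\underline 0\;\vee\;\neg(\alpha=\underline 0)]$ is contradictory. So I assume, for contradiction, $\forall \beta\exists \gamma[Spr(\gamma)\;\wedge\;\mathcal{F}_\gamma=\mathcal{F}_\beta]$ and aim to derive the decidability of the statement $\alpha=\underline 0$ for an arbitrary $\alpha$.

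For each $\alpha\in\mathcal{N}$ I define $\beta_\alpha$ by setting $\beta_\alpha(s)=0$ if and only if $\forall n<\mathrm{length}(s)[\alpha(n)=0]$. Then $\mathcal{F}_{\beta_\alpha}=\mathcal{N}$ provided $\alpha=\underline 0$, and $\mathcal{F}_{\beta_\alpha}=\emptyset$ provided $\neg(\alpha=\underline 0)$. Now invoke the assumption to get a spread-law $\gamma$ with $\mathcal{F}_\gamma=\mathcal{F}_{\beta_\alpha}$. Since $\gamma(\langle\;\rangle)$ is a natural number, one decidably has $\gamma(\langle\;\rangle)=0$ or $\gamma(\langle\;\rangle)\neq 0$. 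In the first case $\mathcal{F}_\gamma$ is inhabited, as noted just before the theorem; picking $\delta\in\mathcal{F}_{\beta_\alpha}$ and unfolding the definition of $\beta_\alpha$ at the initial segments $\overline\delta n$ yields $\forall n[\alpha(n)=0]$, i.e. $\alpha=\underline 0$. In the second case $\mathcal{F}_\gamma=\emptyset$, so $\mathcal{F}_{\beta_\alpha}=\emptyset$; but $\alpha=\underline 0$ would put $\underline 0$ in $\mathcal{F}_{\beta_\alpha}$, so $\neg(\alpha=\underline 0)$.

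Thus, for every $\alpha$, $\alpha=\underline 0\;\vee\;\neg(\alpha=\underline 0)$, and in particular $\forall \beta[\underline 0=\beta\;\vee\;\neg(\underline 0=\beta)]$, contradicting part (i) of the preceding theorem. The only delicate point is the bookkeeping in the first case: one must check that the definition of $\beta_\alpha$ really does force every element of $\mathcal{F}_{\beta_\alpha}$ to witness $\alpha=\underline 0$ (which it does, since membership in $\mathcal{F}_{\beta_\alpha}$ is determined entirely by $\alpha$, not by the particular $\delta$ chosen). No appeal to countable choice is required, because the existential in the hypothesis is discharged inside the proof for a single, generic $\alpha$.
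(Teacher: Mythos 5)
Your proof is correct, but it takes a different route from the paper's. The paper applies Brouwer's Continuity Principle (via Lemma \ref{L:bcpdisj}) directly to the hypothesis: from $\forall \beta\exists \gamma[Spr(\gamma)\wedge\mathcal{F}_\gamma=\mathcal{F}_\beta]$ it extracts the decidable disjunction $\forall\beta[\exists\alpha[\alpha\in\mathcal{F}_\beta]\vee\neg\exists\alpha[\alpha\in\mathcal{F}_\beta]]$, then continuity at $\underline 0$ yields an $m$ such that one of the two alternatives holds on the whole neighbourhood $\overline{\underline 0}m$, and both alternatives are refuted by the witnesses $\overline{\underline 0}m\ast\underline 1$ and $\underline 0$. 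You instead parametrize the ``reckless'' example preceding the theorem, defining $\beta_\alpha$ so that $\mathcal{F}_{\beta_\alpha}$ is inhabited exactly when $\alpha=\underline 0$, and use the hypothesis together with the decidability of $\gamma(\langle\;\rangle)=0$ to conclude $\forall\alpha[\alpha=\underline 0\;\vee\;\neg(\alpha=\underline 0)]$, which contradicts the earlier theorem on the undecidability of equality on $\mathcal{N}$. Your bookkeeping is right: $\delta\in\mathcal{F}_{\beta_\alpha}$ unfolds to $\forall n\forall k<n[\alpha(k)=0]$ independently of $\delta$, the inhabitedness of $\mathcal{F}_\gamma$ when $\gamma(\langle\;\rangle)=0$ is exactly the remark preceding the theorem, and no choice is needed since only a $\gamma$-free disjunction is extracted from the existential. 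Both arguments ultimately rest on the Continuity Principle --- yours only through the cited theorem --- so yours is a clean reduction that makes the single application of continuity reusable, while the paper's is self-contained and exhibits explicitly the two spread-laws on which uniform decidability of inhabitedness must fail.
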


\begin{proof} Assume: $\forall \beta\exists \gamma[Spr(\gamma)\;\wedge\;\mathcal{F}_\gamma=\mathcal{F}_\beta]$.
Then $\forall \beta[\exists\alpha[\alpha \in \mathcal{F}_\beta]\;\vee\;\neg\exists\alpha[\alpha\in \mathcal{F}_\beta]]$.
Using Lemma \ref{L:bcpdisj}, find $m$ such that \textit{either} $\forall \beta[\overline{\underline 0}m\sqsubset \beta\rightarrow \exists\alpha[\alpha\in \mathcal{F}_\beta]]$ \textit{or}\\ $\forall \beta[\overline{\underline 0}m\sqsubset \beta\rightarrow \neg\exists\alpha[\alpha\in \mathcal{F}_\beta]]$. 
Both alternatives are false, as we see by considering $\beta = \underline{\overline0}m\ast\underline 1$ (for the first alternative), and $\beta=\underline 0$ (for the second one).\end{proof}

\begin{lemma}[Brouwer's Continuity Principle extends to spreads]\label{L:bcpspreads} $\;\;$\\Let $\beta$ be given such that $Spr(\beta)$. Then, for all $R\subseteq \mathcal{N}\times\omega$,\\ if $\forall \alpha\in\mathcal{F}_\beta\exists n[\alpha Rn]$, then $\forall \alpha\in\mathcal{F}_\beta\exists m \exists n\forall \gamma\in\mathcal{F}_\beta[\overline\alpha m \sqsubset\gamma\rightarrow \gamma Rn]$.\end{lemma}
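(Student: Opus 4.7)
The plan is to reduce the spread version of the Continuity Principle to the full-space Axiom \ref{ax:bcp} by constructing a continuous retraction $\rho:\mathcal{N}\rightarrow \mathcal{F}_\beta$ which acts as the identity on $\mathcal{F}_\beta$. First, I would dispose of the trivial case $\beta(\langle\;\rangle)\neq 0$, in which $\mathcal{F}_\beta = \emptyset$ and the statement holds vacuously. Thus I may assume $\beta(\langle\;\rangle)=0$.

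Next I would define $\rho(\alpha)$ recursively, by stipulating, for each $n$,
\[
\rho(\alpha)(n) = \begin{cases} \alpha(n) & \text{if } \beta\bigl(\overline{\rho(\alpha)}n\ast\langle \alpha(n)\rangle\bigr)=0,\\ \mu p\bigl[\beta\bigl(\overline{\rho(\alpha)}n\ast\langle p\rangle\bigr)=0\bigr] & \text{otherwise.}\end{cases}
\]
A simple induction using the spread-law guarantees that $\beta\bigl(\overline{\rho(\alpha)}n\bigr)=0$ for every $n$, so the minimum above always exists, and therefore $\rho(\alpha)\in \mathcal{F}_\beta$ for every $\alpha\in\mathcal{N}$. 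The definition is built so that $\overline{\rho(\alpha)}n$ is determined entirely by $\overline{\alpha}n$; this is the continuity of $\rho$ that I shall need. Moreover, if $\alpha\in\mathcal{F}_\beta$, then at each step $n$ one has $\beta\bigl(\overline{\alpha}n\ast\langle \alpha(n)\rangle\bigr)=\beta\bigl(\overline{\alpha}(n+1)\bigr)=0$, so inductively $\rho(\alpha)(n)=\alpha(n)$, and $\rho$ restricts to the identity on $\mathcal{F}_\beta$.

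Now, given $R\subseteq\mathcal{N}\times\omega$ with $\forall \alpha\in \mathcal{F}_\beta\exists n[\alpha Rn]$, I would define $R':=\{(\alpha,n)\mid \rho(\alpha)Rn\}\subseteq\mathcal{N}\times\omega$. Since $\rho(\alpha)\in \mathcal{F}_\beta$ for every $\alpha\in \mathcal{N}$, the hypothesis gives $\forall\alpha\exists n[\alpha R'n]$. Axiom \ref{ax:bcp} applied to $R'$ produces, for each given $\alpha\in\mathcal{F}_\beta$, natural numbers $m,n$ with $\forall\delta[\overline\alpha m\sqsubset\delta\rightarrow \rho(\delta)Rn]$. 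For $\gamma\in\mathcal{F}_\beta$ with $\overline\alpha m\sqsubset\gamma$ we have $\rho(\gamma)=\gamma$, hence $\gamma Rn$, which is exactly what is required.

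I expect the only technical point to watch is the well-definedness and continuity of $\rho$: I must make sure that the recursion defining $\rho(\alpha)(n)$ only consults values of $\alpha$ on $\{0,\ldots,n\}$ and previously computed values of $\rho(\alpha)$, so that the map $\alpha\mapsto \overline{\rho(\alpha)}n$ is a function of $\overline\alpha n$ alone. This is what allows the pull-back from $R'$ back to $R$ on the spread to go through; everything else is a direct appeal to the already established Axiom \ref{ax:bcp}.
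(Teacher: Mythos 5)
Your proposal is correct and follows essentially the same route as the paper: both construct the canonical retraction $\rho$ of $\mathcal{N}$ onto $\mathcal{F}_\beta$ (the paper via an auxiliary map $\sigma$ on finite sequences, you by direct recursion on $n$, which amounts to the same repair procedure) and then pull the relation back along $\rho$ so that Axiom \ref{ax:bcp} applies. The only cosmetic difference is that the continuity of $\rho$ you flag as the delicate point is not actually needed for the argument --- only that $\rho$ is a legitimate (rule-given) map with $\rho(\alpha)\in\mathcal{F}_\beta$ everywhere and $\rho(\gamma)=\gamma$ on $\mathcal{F}_\beta$.
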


\begin{proof} Assume: $Spr(\beta)$. If $\beta(\langle\;\rangle)\neq 0$, then $\mathcal{F}_\beta=\emptyset$ and there is nothing to prove.

 Assume   $\beta(\langle\;\rangle)=0$.
Define $\sigma$ such that $\sigma(\langle\;\rangle)=\langle\;\rangle$ and, for all $s$, for all $n$, \begin{enumerate}\item if $\beta(s\ast\langle n\rangle)=0$, then $\sigma(s\ast\langle n\rangle)=s\ast\langle n\rangle$, and, \item if $\beta(s\ast\langle n \rangle)\neq 0$, then $\sigma(s\ast\langle n \rangle)=\sigma(s)\ast\langle \mu p[\beta\bigl(\sigma(s)\ast\langle p\rangle\bigr)=0]\rangle$.\end{enumerate} Note: $\forall s[\beta\bigl(\sigma(s)\bigr)=0]$ and $\forall s\forall t[s\sqsubset t\rightarrow \sigma(s)\sqsubset\sigma(t)]$.
 
 Define $\rho:\mathcal{N}\rightarrow\mathcal{N}$ such that $\forall \alpha\forall n[ \sigma(\overline \alpha n)\sqsubset \rho|\alpha]$. 
 
 Note: $\forall \alpha[\rho|\alpha\in\mathcal{F}_\beta]\;\wedge\; \forall \alpha \in \mathcal{F}_\beta[\rho|\alpha=\alpha]$.
 
 The function $\rho$ is called a \textit{retraction} of $\mathcal{N}$ onto $\mathcal{F}_\beta$. 
 
 \medskip
 Now assume: $\forall \alpha\in\mathcal{F}_\beta\exists n[\alpha Rn]$. Conclude: $\forall \alpha\exists n[(\rho|\alpha)R n]$. 
 
 Let $\alpha$ in $\mathcal{F}_\beta$ be given.  
  Using Axiom \ref{ax:bcp},  find $m,n$ such that \\$\forall \gamma[\overline \alpha m\sqsubset \gamma\rightarrow (\rho|\gamma)Rn]$.
 Conclude: $\forall \gamma\in\mathcal{F}_\beta[\overline \alpha m\sqsubset \gamma\rightarrow \gamma R n]$.
 
 We thus see: $\forall \alpha \in \mathcal{F}_\beta\exists m\exists n\forall \gamma \in \mathcal{F}_\beta[\overline \gamma m\sqsubset \alpha\rightarrow \gamma Rn]$.
 \end{proof}
 
 Recall that, for all $\alpha, \beta$, $\alpha\;\#\;\beta\leftrightarrow \alpha\perp\beta\leftrightarrow \exists n[\alpha(n)\neq \beta(n)],$
 and  \\$\alpha=\beta\leftrightarrow \forall n[\alpha(n)=\beta(n)]\leftrightarrow \neg(\alpha\;\#\;\beta)$, and $\alpha\neq \beta \leftrightarrow \neg \forall n[\alpha(n)=\beta(n)].$

 The constructive \textit{apartness relation} $\#$ is more useful than the negative \textit{inequality relation} $\neq$.
 
  \textit{Markov's Principle}, in the form: $\forall \alpha[\neg\neg\exists n[\alpha(n)=0]\rightarrow\exists n[\alpha(n)=0]]$\footnote{A.A.~Markov enuntiated this principle for \textit{primitive recursive} $\alpha$ only.},  is equivalent to the statement  that the two relations coincide: $\forall \alpha\forall \beta[\alpha\neq \beta\rightarrow \alpha\;\#\;\beta]$.
   
    The intuitionistic mathematician does not accept Markov's Principle. 
 \begin{definition} We let 
$AP=AP(\mathsf{x,y})$ be the formula $\mathsf{\forall z[ \neg(z=x)\;\vee\;\neg(z=y)]}$.\end{definition}
 The following theorem reformulates a well-known fact.
\begin{theorem}[Apartness is definable]\label{T:apartdef} For all $ \beta$ such that $Spr(\beta)$, \\for all $\alpha, \delta$ in $\mathcal{F}_\beta$, 
$\alpha\;\#\;\delta$ if and only if $(\mathcal{F}_\beta,=)\models AP[\alpha, \delta]$. \end{theorem}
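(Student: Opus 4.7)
The plan is to prove the two implications separately, with the forward direction being elementary and the backward direction being the place where Brouwer's Continuity Principle (in its spread form, Lemma \ref{L:bcpspreads}) must enter.

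For the forward direction, suppose $\alpha\;\#\;\delta$, so I can produce an explicit $n$ with $\alpha(n)\neq\delta(n)$. I need to show that for every $\gamma\in\mathcal{F}_\beta$, either $\neg(\gamma=\alpha)$ or $\neg(\gamma=\delta)$. Since equality of natural numbers is decidable, I can decide whether $\gamma(n)=\alpha(n)$ or not. If $\gamma(n)\neq\alpha(n)$, then $\neg(\gamma=\alpha)$. If $\gamma(n)=\alpha(n)$, then $\gamma(n)\neq \delta(n)$ (as $\alpha(n)\neq\delta(n)$), whence $\neg(\gamma=\delta)$. No continuity is needed here.

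For the converse, assume $\forall\gamma\in\mathcal{F}_\beta[\neg(\gamma=\alpha)\;\vee\;\neg(\gamma=\delta)]$. I would recast this as $\forall\gamma\in\mathcal{F}_\beta\exists n[\gamma R n]$, where $\gamma R 0 \leftrightarrow \neg(\gamma=\alpha)$ and $\gamma R 1\leftrightarrow\neg(\gamma=\delta)$, and apply Lemma \ref{L:bcpspreads} at the point $\alpha\in\mathcal{F}_\beta$. This yields an $m$ and an $n\in\{0,1\}$ such that $\forall\gamma\in\mathcal{F}_\beta[\overline\alpha m\sqsubset\gamma\rightarrow \gamma R n]$. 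The case $n=0$ is immediately refuted by taking $\gamma:=\alpha$, since then $\overline\alpha m\sqsubset\alpha$ would force $\neg(\alpha=\alpha)$. Hence $n=1$, and we have $\forall\gamma\in\mathcal{F}_\beta[\overline\alpha m\sqsubset\gamma\rightarrow \neg(\gamma=\delta)]$.

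Now I compare the finite sequences $\overline\alpha m$ and $\overline\delta m$, whose equality is decidable. If $\overline\alpha m=\overline\delta m$, then $\overline\alpha m\sqsubset\delta$ and, applying the universally quantified conclusion to $\gamma:=\delta\in\mathcal{F}_\beta$, I get $\neg(\delta=\delta)$, which is absurd. Therefore $\overline\alpha m\neq\overline\delta m$, and I exhibit an explicit $i<m$ with $\alpha(i)\neq\delta(i)$, proving $\alpha\;\#\;\delta$. The only real obstacle is recognizing the right way to package the binary disjunction as a $\mathcal{N}\times\omega$ relation so that Lemma \ref{L:bcpspreads} applies cleanly; once that is done, the rest is a matter of eliminating the wrong alternative by plugging in $\alpha$ and $\delta$ themselves.
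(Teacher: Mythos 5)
Your proposal is correct and follows essentially the same route as the paper: the forward direction by decidably comparing values at the witnessing index, and the converse by applying Lemma \ref{L:bcpspreads} at the point $\alpha$, eliminating the first alternative with $\gamma:=\alpha$, and then using decidability of equality of finite sequences to conclude $\overline\alpha m\perp\delta$ (hence $\alpha\;\#\;\delta$) from the second alternative via $\gamma:=\delta$. The only cosmetic difference is that the paper compares initial segments $\overline\gamma n$ rather than single values in the forward direction.
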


\begin{proof}First, assume $\alpha\;\#\;\delta$. Find $n$ such that $\overline \alpha n \neq\overline \delta n$. Note: for every $\gamma$ in $\mathcal{F}_\beta$, \textit{either}: $\overline \gamma n \neq \overline \alpha n$ and $\gamma\;\#\;\alpha$, \textit{or}: $\overline \gamma n\neq\delta n$ and $\gamma\;\#\;\delta$.  Conclude: $(\mathcal{F}_\beta,=)\models AP[\alpha, \delta]$. 

\smallskip

Next, assume $(\mathcal{F}_\beta,=)\models AP[\alpha, \delta]$, that is $\forall \gamma\in \mathcal{F}_\beta[\gamma \neq \alpha\;\vee\;\gamma\neq \delta]$. 

Applying Lemma \ref{L:bcpspreads}, find $m$ such that \textit{either} $\forall \gamma \in \mathcal{F}_\beta[\overline \alpha m\sqsubset \gamma \rightarrow \gamma \neq \alpha]$ \textit{or} $\forall \gamma \in \mathcal{F}_\beta[\overline \alpha m\sqsubset \gamma \rightarrow \gamma \neq \delta]$. The first alternative is clearly wrong (take $\gamma:=\alpha$). The second alternative implies:
$\overline\alpha m \perp \delta$ (if  $\overline \alpha m \sqsubset \delta$, one could take $\gamma:=\delta$), and thus: $\alpha\;\#\;\delta$.    \end{proof}

\begin{definition}\label{D:Tinfplus} $\;$

For each $n$, we let $\psi_n^+$ be the sentence $\mathsf{\exists x_0\exists x_1\ldots \exists x_n}[\bigwedge_{i<j<n}AP(\mathsf{x}_i,\mathsf{x}_j)]$.

 $T_{inf}^+:=EQ\cup\{\psi_n^+\mid n\in \omega\}$.\end{definition}

$\psi_n^+$ expresses that a set has at least $n+1$ elements that are mutually apart.

Every model of $T^+_{inf}$ realizes $T_{inf}$. In the second part of the paper we will meet a structure that realizes $T_{inf}$ but not $T^+_{inf}$, see Theorem \ref{T:vitalinoapartness} in Section
\ref{S:vitali}. 

The theory  $T^+_{inf}\cup \{\forall\mathsf{x\forall y[x=y\;\vee\;\neg(x=y)]}\}$ has only one complete extension, the  same as the one and only complete extension of $T_{inf}\cup \{\forall\mathsf{x\forall y[x=y\;\vee\;\neg(x=y)]}\}$, see Theorem \ref{T:classicalcompleteness}. 

\section{Spreads with a decidable equality}

 \begin{definition} We let $D=D(\mathsf{x})$ be the formula: $\mathsf{\forall y[x=y\;\vee\;\neg(x=y)]}$. 
\end{definition}

\begin{definition} Assume $Spr(\beta)$ and  
$\alpha\in \mathcal{F}_\beta$. 

$\alpha$ is an \emph{isolated} point of $\mathcal{F}_\beta$ if and only if $\exists n\forall \gamma \in \mathcal{F}_\beta[\overline \alpha n \sqsubset \gamma\rightarrow \alpha = \gamma]$, or, equivalently, $\exists n\forall s[\bigl(\overline \alpha n\sqsubset s\;\wedge\;\beta(s)=0\bigr)\rightarrow s\sqsubset \alpha]$.   

$\alpha$ is a \emph{decidable} point of $\mathcal{F}_\beta$ if and only if $\forall \gamma\in\mathcal{F}_\beta[\alpha =\gamma\;\vee\;\neg(\alpha =\gamma)]$, or, equivalently, $(\mathcal{F}_\beta,=)\models D[\alpha]$.

 $\mathcal{I}(\mathcal{F}_\beta)$ is the set of
  the isolated points of $\mathcal{F}_\beta$. \end{definition}
  
  Cantor called $\mathcal{I}(\mathcal{F}_\beta)$ the \textit{adherence} of $\mathcal{F}_\beta$.
 
\begin{lemma}\label{L:defisol} Assume $Spr(\beta)$. \begin{enumerate}[\upshape (i)] \item For each $\alpha$ in $\mathcal{F}_\beta$, $\alpha$ is an isolated point of $\mathcal{F}_\beta$ if and only if $\alpha$ is a decidable point of $\mathcal{F}_\beta$. 

\item $\mathcal{I}(\mathcal{F}_\beta)$ is a definable subset of $\mathcal{F}_\beta$. \end{enumerate} \end{lemma}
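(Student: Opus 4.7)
The plan is to prove the two directions of (i) separately and then deduce (ii) immediately. For (i), the easy direction is ``isolated $\Rightarrow$ decidable'', which requires no appeal to continuity: if $\alpha\in\mathcal{F}_\beta$ is isolated, fix $n$ witnessing this, and let $\gamma\in\mathcal{F}_\beta$ be arbitrary. Since $\overline\alpha n$ and $\overline\gamma n$ are finite sequences, we may decide whether they agree. If $\overline\alpha n=\overline\gamma n$, then $\overline\alpha n\sqsubset \gamma$ and the isolation hypothesis gives $\alpha=\gamma$. If $\overline\alpha n\neq \overline\gamma n$, we witness $\alpha\;\#\;\gamma$, hence in particular $\neg(\alpha=\gamma)$.

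The interesting direction is ``decidable $\Rightarrow$ isolated'', and here I would use the spread version of Brouwer's Continuity Principle, Lemma \ref{L:bcpspreads}. Assume $\alpha$ is a decidable point of $\mathcal{F}_\beta$, i.e.\ $\forall\gamma\in\mathcal{F}_\beta[\alpha=\gamma\;\vee\;\neg(\alpha=\gamma)]$. Define $R\subseteq\mathcal{N}\times\omega$ by putting $\gamma R 0$ if $\alpha=\gamma$ and $\gamma R 1$ if $\neg(\alpha=\gamma)$. The decidability hypothesis yields $\forall\gamma\in\mathcal{F}_\beta\exists n[\gamma Rn]$, so Lemma \ref{L:bcpspreads} applied at $\alpha\in\mathcal{F}_\beta$ furnishes $m$ and $n$ such that $\forall \gamma \in\mathcal{F}_\beta[\overline\alpha m\sqsubset\gamma\rightarrow \gamma Rn]$. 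Since $\alpha\in\mathcal{F}_\beta$ and $\overline\alpha m\sqsubset \alpha$, we have $\alpha R n$; the value $n=1$ would force $\neg(\alpha=\alpha)$, which is absurd, so $n=0$. Therefore $\forall \gamma\in\mathcal{F}_\beta[\overline\alpha m\sqsubset\gamma\rightarrow \alpha=\gamma]$, which is exactly what it means for $\alpha$ to be isolated.

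For (ii), part (i) says that $\mathcal{I}(\mathcal{F}_\beta)=\{\alpha\in\mathcal{F}_\beta\mid(\mathcal{F}_\beta,=)\models D[\alpha]\}$, and the right-hand side is, by definition, the subset of $\mathcal{F}_\beta$ defined by the formula $D(\mathsf{x})$ of our language.

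The main obstacle, and indeed the whole intuitionistic content of the lemma, is the second direction: classically ``decidable $\Rightarrow$ isolated'' is \emph{false} (every point is trivially decidable but need not be isolated), and so the argument must essentially use Brouwer's Continuity Principle in the spread form (Lemma \ref{L:bcpspreads}). The only subtlety is setting up the binary relation $R$ correctly and noting that evaluating the continuous choice at $\alpha$ itself rules out the branch $n=1$.
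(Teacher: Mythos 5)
Your proposal is correct and follows essentially the same route as the paper: the easy direction decides $\overline\alpha n$ against $\overline\gamma n$, the hard direction applies Lemma \ref{L:bcpspreads} to the decidability disjunction and rules out the negative branch by evaluating at $\gamma:=\alpha$, and (ii) is read off from (i). Your explicit construction of the relation $R$ is just a slightly more spelled-out way of invoking the continuity principle than the paper's appeal to its disjunctive form.
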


\begin{proof} (i)  
Let $\alpha$ be an isolated point of $\mathcal{F}_\beta$. 

Find $n$ such that $\forall \gamma \in \mathcal{F}_\beta[\overline \alpha n \sqsubset \gamma\rightarrow \alpha = \gamma]$.

Note: for each $\gamma$ in $\mathcal{F}_\beta$, \textit{either} $\overline \alpha n\sqsubset \gamma$ and  $\alpha =\gamma$, \textit{or} $\overline \alpha n\perp \gamma$ and $\alpha\neq \gamma$.  
 
  Conclude:  $\forall \gamma \in \mathcal{F}_\beta[\alpha=\gamma\;\vee\;\neg(\alpha=\gamma)]$, that is: $\alpha$ is a decidable point of $\mathcal{F}_\beta$. 

\smallskip
Now assume: $\alpha$ is a decidable point of $\mathcal{F}_\beta$, that is: $\forall \gamma \in \mathcal{F}_\beta[\alpha=\gamma\;\vee\;\neg(\alpha=\gamma)]$. 

Apply Lemma \ref{L:bcpspreads} and find $m$ such that \textit{either} $\forall \gamma \in \mathcal{F}_\beta[\overline \alpha m\sqsubset \gamma\rightarrow \alpha =\gamma]$ \textit{or} $\forall \gamma \in \mathcal{F}_\beta[\overline \alpha m\sqsubset \gamma\rightarrow \neg( \alpha =\gamma)]$. As the second alternative does not hold (take $\gamma =\alpha$), conclude: $\forall \gamma \in \mathcal{F}_\beta[\overline \alpha m\sqsubset \gamma\rightarrow \alpha =\gamma]$, and: $\alpha$ is an isolated point of $ \mathcal{F}_\beta$. 

\smallskip (ii) Using (i), note: $\mathcal{I}(\mathcal{F}_\beta) =\{\alpha\in \mathcal{F}_\beta\mid (\mathcal{F}_\beta, =)\models D[\alpha]\}$.  \end{proof}

\begin{definition}\label{D:limitpoint} Assume $Spr(\beta)$ and 
$\alpha \in \mathcal{F}_\beta$.

$\alpha$ is a \emph{limit point} of $\mathcal{F}_\beta$ if and only if $\forall n\exists \delta \in \mathcal{F}_\beta[ \overline \alpha n\sqsubset \delta\;\wedge\;\alpha\perp\delta]$, or, equivalently, $\forall n\exists s[\overline \alpha n \sqsubset s\;\wedge\; \beta(s) =0\;\wedge\;\overline \alpha n\perp s]$. 

$\mathcal{L}(\mathcal{F}_\beta)$ is the set of the limit points of $\mathcal{F}_\beta$. \end{definition}

Cantor called $\mathcal{L}(\mathcal{F}_\beta)$ the \textit{coherence} of $\mathcal{F}_\beta$.

 \begin{lemma}\label{L:limit} $\forall \beta[Spr(\beta)\rightarrow \mathcal{L}(\mathcal{F}_\beta)\subseteq\mathcal{F}_\beta\setminus \mathcal{I}(\mathcal{F}_\beta) ]$, that is:\\
   in all spreads,  every limit point is a non-isolated point.  
  \end{lemma}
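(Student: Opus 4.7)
The plan is to unfold both definitions and derive a direct contradiction from the assumption that a limit point could simultaneously be isolated. Fix $\beta$ with $Spr(\beta)$ and let $\alpha\in\mathcal{L}(\mathcal{F}_\beta)$. The inclusion $\mathcal{L}(\mathcal{F}_\beta)\subseteq\mathcal{F}_\beta$ is immediate from Definition \ref{D:limitpoint}, so the whole content of the lemma is the assertion $\neg(\alpha\in\mathcal{I}(\mathcal{F}_\beta))$. I would prove this by assuming $\alpha\in\mathcal{I}(\mathcal{F}_\beta)$ and producing $0=1$.

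So assume, toward a contradiction, that $\alpha$ is isolated. By definition, pick $n$ with $\forall\gamma\in\mathcal{F}_\beta[\overline\alpha n\sqsubset\gamma\rightarrow\alpha=\gamma]$. Now apply the limit-point assumption to this same $n$: there exists $\delta\in\mathcal{F}_\beta$ with $\overline\alpha n\sqsubset\delta$ and $\alpha\perp\delta$. The first clause together with isolation forces $\alpha=\delta$, i.e.\ $\forall k[\alpha(k)=\delta(k)]$, while the second clause gives $\exists k[\alpha(k)\neq\delta(k)]$. Specialising the universal statement to the witness $k$ of the existential yields $\alpha(k)=\delta(k)$ and $\alpha(k)\neq\delta(k)$ simultaneously, contradicting the decidability of equality on $\omega$. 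Hence $\neg(\alpha\in\mathcal{I}(\mathcal{F}_\beta))$, completing the proof.

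There is essentially no obstacle here: the proof is a pure definition-unfolding and does not require Brouwer's Continuity Principle or any of the continuity lemmas established earlier. The only point to watch is that one uses the \emph{positive} apartness $\alpha\perp\delta$ built into Definition \ref{D:limitpoint} rather than a mere negation $\alpha\neq\delta$; it is this positivity that allows the clean constructive contradiction with $\alpha=\delta$ without appealing to Markov's Principle.
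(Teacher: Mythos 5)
Your proof is correct and is precisely the definition-unfolding argument the paper intends: the paper's own proof is just the single word ``Obvious''. Your remark that the positive apartness $\alpha\perp\delta$ in Definition \ref{D:limitpoint} is what makes the contradiction with $\alpha=\delta$ constructively immediate is exactly the right point to flag.
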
 
 
 \begin{proof} Obvious. \end{proof}

 \begin{theorem}\label{T:equivmarkov} The following are equivalent: \begin{enumerate}[\upshape (i)] \item Markov's Principle: $\forall\alpha[\neg\neg\exists n[\alpha(n)=0]\rightarrow \exists n[\alpha(n)=0]]$. \item $\forall \beta[Spr(\beta)\rightarrow \mathcal{F}_\beta\setminus \mathcal{I}(\mathcal{F}_\beta)\subseteq \mathcal{L}(\mathcal{F}_\beta)]$, that is:\\
   in all spreads,  every non-isolated point is a limit point.  \end{enumerate} \end{theorem}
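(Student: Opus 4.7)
The plan is to prove the two directions separately; the more delicate one is the reverse direction, where the right spread needs to be identified.

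For $(i) \Rightarrow (ii)$, I would assume Markov's Principle, take $\beta$ with $Spr(\beta)$, and consider $\alpha \in \mathcal{F}_\beta \setminus \mathcal{I}(\mathcal{F}_\beta)$. Using the equivalent formulation of isolation, ``$\alpha$ is not isolated'' reads $\forall n\,\neg \forall s[(\overline\alpha n \sqsubset s \wedge \beta(s) = 0) \rightarrow s \sqsubset \alpha]$. For each fixed $n$ I would exploit that $s \sqsubset \alpha$ is a decidable property of the finite sequence $s$ to rewrite this, by the usual manoeuvre valid when the consequent is decidable, as $\neg\neg \exists s[\overline\alpha n \sqsubset s \wedge \beta(s) = 0 \wedge \neg(s \sqsubset \alpha)]$, a doubly-negated $\Sigma^0_1$ claim to which Markov's Principle applies after coding the search for $s$ as the search for a zero of a function in $\omega^\omega$. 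The resulting $s$ disagrees with $\alpha$ at some position $k \geq n$; extending $s$ to some $\delta \in \mathcal{F}_\beta$ by the minimal-witness procedure recalled after the definition of spreads produces a $\delta$ that extends $\overline\alpha n$ and is apart from $\alpha$, witnessing $\alpha$ as a limit point at level $n$.

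For $(ii) \Rightarrow (i)$, I would take $\alpha \in \mathcal{N}$ with $\neg\neg \exists n[\alpha(n) = 0]$ and first preprocess it to $\alpha'$ defined by $\alpha'(n) = 0 \leftrightarrow \exists k \leq n[\alpha(k) = 0]$, which preserves the existence of a zero and satisfies the ``zeros propagate'' property that once $\alpha'$ takes value $0$ it remains $0$ forever. I would then define $\beta$ by $\beta(s) = 0$ iff $s$ is a $\{0,1\}$-valued finite sequence such that $s(i) = 1$ implies $\alpha'(i) = 0$ at every position $i$ of $s$; the spread law is immediate. Propagation then yields two clean equivalences: $\underline 0$ is isolated in $\mathcal{F}_\beta$ iff $\neg\exists k[\alpha'(k) = 0]$, while $\underline 0$ is a limit point iff $\exists k[\alpha'(k) = 0]$. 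Hence the hypothesis places $\underline 0$ in $\mathcal{F}_\beta \setminus \mathcal{I}(\mathcal{F}_\beta)$, and (ii) delivers that $\underline 0$ is a limit point, giving $\exists n[\alpha(n) = 0]$.

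The main obstacle is locating the right spread for the reverse direction. Without the ``zeros propagate'' preprocessing, isolation of $\underline 0$ would correspond to $\exists n\,\forall k \geq n[\alpha(k) \neq 0]$ and limit-point-hood to ``$\alpha$ has infinitely many zeros'', so (ii) would deliver only a strictly weaker principle than Markov's. Replacing $\alpha$ by $\alpha'$ collapses both conditions to $\exists k[\alpha(k) = 0]$ and its negation, which is exactly what Markov's Principle as stated demands.
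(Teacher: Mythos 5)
Your proposal is correct and follows essentially the same route as the paper: for (i)$\Rightarrow$(ii) it applies Markov's Principle to the doubly negated existence of a node of the spread that extends $\overline\alpha n$ and deviates from $\alpha$ (the paper packages this as the decidable predicate $\delta(s)=0 \leftrightarrow (\overline\alpha n\sqsubset s\;\wedge\;\beta(s)=0\;\wedge\;s\perp\alpha)$), and for (ii)$\Rightarrow$(i) it builds the same spread the paper does, since your condition $s(i)=1\rightarrow\alpha'(i)=0$ is literally the paper's $s(m)\neq 0\rightarrow\exists n\le m[\alpha(n)=0]$, with $\underline 0$ isolated iff $\neg\exists n[\alpha(n)=0]$ and a limit point iff $\exists n[\alpha(n)=0]$. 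The remaining differences (restricting to a binary spread, extending the finite witness $s$ to an infinite $\delta\in\mathcal{F}_\beta$ rather than using the equivalent finite-sequence formulation of \emph{limit point}) are cosmetic.
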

 
 \begin{proof} (i) $\Rightarrow$ (ii). Let $\beta$ be given such that $Spr(\beta)$. Assume $\alpha$ is not an isolated point of $\mathcal{F}_\beta$, that is:  $\neg \exists n\forall s[\bigl(\overline\alpha n\sqsubset s\;\wedge\;\beta(s)=0\bigr)\rightarrow s\sqsubset\alpha]$.  
 
 Let $n$ be given. Define $\delta$ such that $\forall s[\delta(s)=0 \leftrightarrow (\overline \alpha n\sqsubset s\;\wedge\;\beta(s) =0 \;\wedge\; s\perp \alpha)]$. Then $\neg\forall s[\delta(s)\neq 0]$ and: $\neg\neg\exists s[\delta(s)=0]$. 
 
 Using \textit{Markov's Principle}, we  conclude: $\exists s[\delta(s)=0]$. 
 
 We thus see: 
  $\forall n \exists s[\overline \alpha s\sqsubset s \;\wedge\;\beta(s)=0 \;\wedge\; s\perp\alpha]$,  and: $\alpha$ is a limit point of $\mathcal{F}_\beta$.

  \smallskip 
  
 (ii) $\Rightarrow$ (i).   Let us assume: $\forall \beta[Spr(\beta)\rightarrow \mathcal{F}_\beta\setminus \mathcal{I}(\mathcal{F}_\beta)\subseteq \mathcal{L}(\mathcal{F}_\beta)]$, 
  
  Let $\alpha$ be given such that $\neg\neg\exists n[\alpha(n)=0]$.
  \\Define $\beta$ such that 
  $\forall s[\beta(s)=0\leftrightarrow \forall m<length(s)[s(m)\neq 0\rightarrow \exists n\le m[\alpha(n)=0]]]$.  
  
  Note: $Spr(\beta)$ and $\underline  0 \in \mathcal{F}_\beta$, and:  if $\exists n[\alpha(n)=0]$, then $\underline 0$ is a limit point of $\mathcal{F}_\beta$.
  
  Conclude: 
  if $\underline 0$ is an isolated point of $\mathcal{F}_\beta$, then $\neg\exists n[\alpha(n)=0]$.
  
 As $\neg\neg\exists n[\alpha(n)=0]$, 
 conclude: $\underline 0$ is not an isolated point of $\mathcal{F}_\beta$. 
 
 By our assumption,  $\underline 0$ thus is a limit point of $\mathcal{F}_\beta$.
 
  Find $s$ such that $\beta(s)=0$ and $s \perp \underline 0$. Conclude: $\exists n\le length(s)[\alpha(n)=0]$.
  
  Conclude: $\forall\alpha[\neg\neg\exists n[\alpha(n)=0]\rightarrow\exists n[\alpha(n)=0]]$, that is: Markov's Principle. 
  \end{proof}
  \smallskip
  We thus see that the converse of Lemma \ref{L:limit}, being equivalent to Markov's Principle, is not an intuitionistic theorem. 
  
  \smallskip We could not answer the question if, in general, $\mathcal{L}(\mathcal{F}_\beta)$ is a definable subset of $(\mathcal{F}_\beta, =)$. In some special cases, however, it is, and the following definition is useful.
  
  \begin{definition}\label{D:transparent} Assume $Spr(\beta)$. $\mathcal{F}_\beta$ is called \emph{transparent} if and only if there exists $\gamma$ such that $Spr(\gamma)$ and $\mathcal{F}_\gamma = \mathcal{L}(\mathcal{F}_\beta)$ and $\forall \alpha \in \mathcal{F}_\beta[\exists n[\gamma(\overline \alpha n)\neq 0]\rightarrow \alpha \in \mathcal{I}(\mathcal{F}_\beta)]$.
   \end{definition}
   
   Note that, for each $\beta$ such that $Spr(\beta)$, if $\mathcal{F}_\beta$ is transparent, then \\$\mathcal{F}_\beta\setminus \mathcal{I}(\mathcal{F}_\beta)\subseteq \mathcal{L}(\mathcal{F}_\beta)$. 
    The statement that every spread $\mathcal{F}_\beta$ is transparent thus is seen to imply Markov's Principle. 
    
    In Section \ref{S:moreundec} we will see many examples of transparent spreads.
    
    The fact that not every spread is a transparent spread is one of the reasons that Brouwer did not succeed in finding a nice intuitionistic version of Cantor's Main Theorem\footnote{Cantor's Main Theorem nowadays is called the Perfect Set Theorem: \textit{every closed subset of $\mathcal{N}$ is the union of a perfect set and an at most countable set.}}, see \cite{brouwer19}. 
  
\begin{definition}\label{D:functionspreads} Let $\beta$ satisfy $Spr(\beta)$ and let $\varphi$ be given. 

\smallskip
We define: $\varphi:\mathcal{F}_\beta\rightarrow \omega$ if and only if $\forall \alpha \in \mathcal{F}_\beta \exists p[\varphi(\overline\alpha p)\neq 0]$.

If $\varphi:\mathcal{F}_\beta \rightarrow \omega$, then we define, for each $\alpha$ in $\mathcal{F}_\beta$, $\varphi(\alpha)$ as the number $z$ such that $\varphi(\overline \alpha q)=z+1$, where $q=\mu p[\varphi(\overline \alpha p)\neq 0]$. 

\smallskip
We define: $\varphi$ is an injective map from $\mathcal{F}_\beta$ into $\omega$, notation: $\varphi:\mathcal{F}_\beta \hookrightarrow\omega$,\\ if and only if $\varphi:\mathcal{F}_\alpha\rightarrow \omega$ and $\forall \alpha\in\mathcal{F}_\beta\forall \delta\in\mathcal{F}_\beta[\alpha\;\#\;\delta\rightarrow \varphi(\alpha)\neq\varphi(\delta)]$.

\smallskip
We define: $\varphi: \mathcal{F}_\beta\rightarrow \mathcal{N}$ if and only if $\forall n[\varphi^n:\mathcal{F}_\beta\rightarrow \omega]$.

If $\varphi:\mathcal{F}_\beta\rightarrow \mathcal{N}$, then we define, for each $\alpha$ in $\mathcal{F}_\beta$, $\varphi|\alpha$ as the element $\delta$ of $\mathcal{N}$ such that $\forall n[\delta(n)=\varphi^n(\alpha)]$.

\smallskip
We define: $\varphi$ is an injective map from $\mathcal{F}_\beta$ into $\mathcal{N}$, notation: $\varphi:\mathcal{F}_\beta \hookrightarrow\mathcal{N}$,\\ if and only if $\varphi:\mathcal{F}_\alpha\rightarrow \mathcal{N}$ and $\forall \alpha\in\mathcal{F}_\beta\forall \delta\in\mathcal{F}_\beta[\alpha\;\#\;\delta\rightarrow \varphi|\alpha\;\#\;\varphi|\delta]$.

\smallskip For every $\mathcal{X}\subseteq \mathcal{N}$, $\mathcal{F}_\beta$ \emph{embeds into} $\mathcal{X}$ if and only if there exists an injective map from $\mathcal{F}_\beta$ into $\mathcal{X}$. 
\end{definition}

The following axiom is, at least at first sight,  a little bit stronger than Brouwer's Continuity Principle.

\begin{axiom}[First Axiom of Continuous Choice]\label{ax:fcc} For all $R\subseteq \mathcal{N}\times\omega$,\\ if $\forall \alpha \exists n[\alpha Rn]$, then $\exists \varphi:\mathcal{N}\rightarrow\omega\forall \alpha[\alpha R\varphi(\alpha)]$. \end{axiom}

\begin{lemma}[The First Axiom of Continuous Choice extends to spreads]\label{L:fccspreads}$\;\;$\\ Let $\beta$ be given such that $Spr(\beta)$. Then, for all $R\subseteq \mathcal{F}_\beta\times \omega$, \\if $\forall \alpha \in \mathcal{F}_\beta\exists n[\alpha R n]$, then $\exists\varphi: \mathcal{F}_\beta\rightarrow \omega\forall\alpha\in\mathcal{F}_\beta[\alpha R\varphi(\alpha)]$. \end{lemma}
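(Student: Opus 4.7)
The plan is to reduce the problem on the spread $\mathcal{F}_\beta$ to the First Axiom of Continuous Choice on all of $\mathcal{N}$ by using the retraction construction already carried out in the proof of Lemma \ref{L:bcpspreads}.

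First I would dispose of the trivial case: if $\beta(\langle\;\rangle)\neq 0$, then $\mathcal{F}_\beta=\emptyset$ and there is nothing to show, so we may assume $\beta(\langle\;\rangle)=0$. Then I would invoke (or re-construct) the retraction $\rho:\mathcal{N}\rightarrow \mathcal{F}_\beta$ with $\forall \alpha[\rho|\alpha\in\mathcal{F}_\beta]$ and $\forall\alpha\in\mathcal{F}_\beta[\rho|\alpha=\alpha]$, exactly as in Lemma \ref{L:bcpspreads}.

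Next I would define $R'\subseteq \mathcal{N}\times\omega$ by stipulating $\alpha R' n\leftrightarrow (\rho|\alpha)R n$. Assuming $\forall \alpha \in \mathcal{F}_\beta \exists n[\alpha R n]$, and using that $\rho|\alpha\in\mathcal{F}_\beta$ for every $\alpha$, I obtain $\forall \alpha\exists n[\alpha R' n]$. Axiom \ref{ax:fcc} then yields $\psi:\mathcal{N}\rightarrow \omega$ with $\forall \alpha[\alpha R'\psi(\alpha)]$, that is, $\forall \alpha[(\rho|\alpha)R\psi(\alpha)]$.

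Finally, the function $\psi$ already satisfies $\forall \alpha\exists p[\psi(\overline \alpha p)\neq 0]$ on all of $\mathcal{N}$, hence in particular on $\mathcal{F}_\beta$, so setting $\varphi:=\psi$ gives a function with $\varphi:\mathcal{F}_\beta\rightarrow\omega$ in the sense of Definition \ref{D:functionspreads}; and since $\rho|\alpha=\alpha$ for $\alpha\in\mathcal{F}_\beta$, we get $\alpha R\varphi(\alpha)$ for every $\alpha$ in $\mathcal{F}_\beta$. There is no real obstacle here — the one place to be slightly careful is to keep the notational distinction between $\varphi$ as an element of $\mathcal{N}$ coding a partial continuous functional and $\varphi(\alpha)$ as the value it yields, but the transition from $\mathcal{N}$ to $\mathcal{F}_\beta$ is painless because the retraction sends every input in $\mathcal{N}$ into $\mathcal{F}_\beta$ and fixes the points of $\mathcal{F}_\beta$.
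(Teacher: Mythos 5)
Your proof is correct and is essentially identical to the paper's own argument: both handle the empty case, use the retraction $\rho$ from the proof of Lemma \ref{L:bcpspreads} to transfer the hypothesis to all of $\mathcal{N}$, apply Axiom \ref{ax:fcc} there, and then restrict the resulting $\varphi$ back to $\mathcal{F}_\beta$ using the fact that $\rho$ fixes $\mathcal{F}_\beta$ pointwise. The explicit introduction of $R'$ is only a notational elaboration of what the paper does implicitly.
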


\begin{proof}Assume: $Spr(\beta)$ and   $\beta(\langle\;\rangle)=0$.
As in the proof of Lemma \ref{L:bcpspreads}, define $\rho:\mathcal{N}\rightarrow\mathcal{F}_\beta$ such that  $\forall \alpha[\rho|\alpha\in\mathcal{F}_\beta]\;\wedge\; \forall \alpha \in \mathcal{F}_\beta[\rho|\alpha=\alpha]$.

 \medskip
 Now assume $\forall \alpha\in\mathcal{F}_\beta\exists n[\alpha Rn]$. Conclude: $\forall \alpha\exists n[(\rho|\alpha)R n]$.

  Applying Axiom \ref{ax:fcc}, find $\varphi:\mathcal{N}\rightarrow\omega$ such that $\forall \gamma[ (\rho|\gamma)R\varphi(\gamma)]$.
 
 Conclude: $\varphi:\mathcal{F}_\beta\rightarrow \omega$ and $\forall \gamma\in\mathcal{F}_\beta[\gamma R\varphi(\gamma)]$.
 \end{proof}
\begin{theorem}\label{T:decsprintoomega} Assume $Spr(\beta)$. $(\mathcal{F}_\beta,=)\models \mathsf{\forall x}[D(\mathsf{x})]$ if and only if $\exists \varphi[\varphi:\mathcal{F}_\beta\hookrightarrow \omega]$. \end{theorem}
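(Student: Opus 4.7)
The plan is to prove both directions of the equivalence separately, with the left-to-right direction carrying all the content.

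For the easier right-to-left direction, suppose $\varphi:\mathcal{F}_\beta\hookrightarrow\omega$ is given, and let $\alpha,\delta$ in $\mathcal{F}_\beta$ be arbitrary. Since $\varphi(\alpha),\varphi(\delta)$ are natural numbers, I simply compare them. If $\varphi(\alpha)=\varphi(\delta)$, then $\alpha\;\#\;\delta$ would contradict injectivity, so $\neg(\alpha\;\#\;\delta)$, i.e.\ $\alpha=\delta$. If $\varphi(\alpha)\neq\varphi(\delta)$, then $\alpha=\delta$ would entail $\varphi(\alpha)=\varphi(\delta)$, because $\varphi(\alpha)$ is read off from a finite initial segment of $\alpha$ which would coincide with the corresponding initial segment of $\delta$; hence $\neg(\alpha=\delta)$. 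In either case $(\mathcal{F}_\beta,=)\models D[\alpha]$ for this $\alpha$.

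For the left-to-right direction, assume $(\mathcal{F}_\beta,=)\models\mathsf{\forall x}[D(\mathsf{x})]$. By Lemma \ref{L:defisol}(i), every $\alpha\in\mathcal{F}_\beta$ is isolated, so $\forall\alpha\in\mathcal{F}_\beta\exists n\forall\gamma\in\mathcal{F}_\beta[\overline\alpha n\sqsubset\gamma\rightarrow\alpha=\gamma]$. Applying Lemma \ref{L:fccspreads} (the First Axiom of Continuous Choice for spreads) to the relation pairing each $\alpha\in\mathcal{F}_\beta$ with such an $n$, I obtain a function $\mu:\mathcal{F}_\beta\rightarrow\omega$ such that $\forall\alpha\in\mathcal{F}_\beta\forall\gamma\in\mathcal{F}_\beta[\overline\alpha\mu(\alpha)\sqsubset\gamma\rightarrow\alpha=\gamma]$.

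I then define $\varphi:\mathcal{F}_\beta\rightarrow\omega$ so that $\varphi(\alpha)$ is a fixed injective code of the finite sequence $\overline\alpha\mu(\alpha)$. For injectivity, suppose $\alpha,\delta\in\mathcal{F}_\beta$ satisfy $\varphi(\alpha)=\varphi(\delta)$; then $\overline\alpha\mu(\alpha)=\overline\delta\mu(\delta)$ as finite sequences, so $\mu(\alpha)=\mu(\delta)=:n$ and $\overline\alpha n=\overline\delta n$, whence $\overline\alpha n\sqsubset\delta$ and therefore $\alpha=\delta$ by the isolating property of $\mu$. Contrapositively, $\alpha\;\#\;\delta$ implies $\varphi(\alpha)\neq\varphi(\delta)$, so $\varphi:\mathcal{F}_\beta\hookrightarrow\omega$.

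The main obstacle is the bookkeeping needed to exhibit $\varphi$ as an honest member of Definition \ref{D:functionspreads}: given $\mu$ as a map on finite sequences, $\varphi$ must wait long enough along $\alpha$ to compute both the value $\mu(\alpha)$ and the relevant initial segment $\overline\alpha\mu(\alpha)$ before committing to an output. Since both quantities are decided at a finite stage, this is routine rather than delicate; the real content of the proof lies entirely in invoking Lemma \ref{L:defisol}(i) and Lemma \ref{L:fccspreads} to produce $\mu$.
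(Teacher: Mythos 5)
Your proof is correct and follows essentially the same route as the paper's: the right-to-left direction exploits decidability of equality on $\omega$ together with injectivity exactly as in the text, and the left-to-right direction combines Lemma \ref{L:defisol}(i) with Lemma \ref{L:fccspreads} and then codes the isolating initial segment $\overline\alpha\mu(\alpha)$ as the value of the injection, which is precisely the paper's $\psi(\alpha)=\overline\alpha\varphi(\alpha)$. The only difference is that you spell out the injectivity check and the neighbourhood-function bookkeeping that the paper dismisses with ``Clearly''.
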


\begin{proof} First assume: $(\mathcal{F}_\beta,=)\models\mathsf{\forall x}[D(\mathsf{x})]$. Then, by Lemma \ref{L:defisol},\\ $\forall \alpha\in\mathcal{F}_\beta\exists n\forall \gamma\in\mathcal{F}_\beta[\overline\alpha n\sqsubset \gamma\rightarrow \alpha =\gamma]$. Using Lemma \ref{L:fccspreads}, find $\varphi:\mathcal{F}_\beta\rightarrow \omega$ such that $\forall \alpha\in\mathcal{F}_\beta\forall \gamma\in\mathcal{F}_\beta[\overline\alpha \varphi(\alpha)\sqsubset \gamma\rightarrow \alpha =\gamma]$. Define $\psi:\mathcal{F}_\beta\rightarrow \omega$ such that $\forall \alpha \in \mathcal{F}_\beta[\psi(\alpha)=\overline \alpha \varphi(\alpha)]$. Clearly, $\psi:\mathcal{F_\beta}\hookrightarrow \omega$.

\medskip Now assume: $\varphi:\mathcal{F}_\beta\hookrightarrow \omega$. Note: $\forall \alpha\in\mathcal{F}_\beta\forall \delta\in \mathcal{F}_\beta[\alpha = \delta\leftrightarrow \varphi(\alpha)=\varphi(\delta)]$. \\Also: $\forall \alpha\in\mathcal{F}_\beta\forall \delta\in \mathcal{F}_\beta[\varphi(\alpha)=\varphi(\delta)\;\vee\;\neg\bigl(\varphi(\alpha)=\varphi(\delta)\bigr)]$. \\Therefore: $\forall \alpha\in\mathcal{F}_\beta\forall \delta\in \mathcal{F}_\beta[\alpha = \delta\;\vee\; \neg(\alpha =\delta)]$.
Conclude:   $(\mathcal{F}_\beta,=)\models \mathsf{\forall x}[D(\mathsf{x})]$.
\end{proof} 

\begin{definition} Assume $Spr(\beta)$. $\mathcal{F}_\beta$ is \emph{enumerable} if and only if either $\mathcal{F}_\beta=\emptyset$ or $\exists \delta[\forall n[\delta^n \in \mathcal{F}_\beta]\;\wedge\;\forall \alpha \in \mathcal{F}_\beta\exists n[\alpha =\delta^n]]$. \end{definition}

\begin{lemma}\label{L:enumer} Assume $Spr(\beta)$. $\mathcal{F}_\beta$ is enumerable if and only if $\exists \varphi[\varphi:\mathcal{F}_\beta\hookrightarrow \omega]$. \end{lemma}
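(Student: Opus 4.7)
The plan is to prove both implications, using Lemma \ref{L:fccspreads} for the forward direction and Theorem \ref{T:decsprintoomega} together with Lemma \ref{L:defisol} for the backward direction.

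$(\Rightarrow)$ If $\mathcal{F}_\beta=\emptyset$, then any $\varphi$ satisfies the (vacuously quantified) clauses of $\varphi:\mathcal{F}_\beta\hookrightarrow\omega$. Otherwise, an enumeration $\delta$ furnishes the premise $\forall\alpha\in\mathcal{F}_\beta\exists n[\alpha=\delta^n]$ of Lemma \ref{L:fccspreads}, yielding a continuous $\varphi:\mathcal{F}_\beta\rightarrow\omega$ with $\alpha=\delta^{\varphi(\alpha)}$ for every $\alpha\in\mathcal{F}_\beta$. For injectivity, if $\alpha\;\#\;\gamma$ with both in $\mathcal{F}_\beta$, then the equality $\varphi(\alpha)=\varphi(\gamma)$ would imply $\alpha=\delta^{\varphi(\alpha)}=\delta^{\varphi(\gamma)}=\gamma$, contradicting $\alpha\;\#\;\gamma$; hence $\varphi(\alpha)\neq\varphi(\gamma)$.

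$(\Leftarrow)$ First decide $\beta(\langle\;\rangle)=0$ or $\beta(\langle\;\rangle)\neq 0$. In the second case $\mathcal{F}_\beta=\emptyset$ and enumerability holds by definition. Assume the first case. Theorem \ref{T:decsprintoomega} gives $(\mathcal{F}_\beta,=)\models\mathsf{\forall x}[D(\mathsf{x})]$, and Lemma \ref{L:defisol}(i) then says every point of $\mathcal{F}_\beta$ is isolated. For each finite sequence $s$ with $\beta(s)=0$, define $\alpha_s\in\mathcal{F}_\beta$ extending $s$ by setting $\alpha_s(i)=s(i)$ for $i<length(s)$ and $\alpha_s(i)=\mu p[\beta(\overline{\alpha_s}i\ast\langle p\rangle)=0]$ for $i\geq length(s)$; this is well defined thanks to $Spr(\beta)$, exactly as in the discussion following the definition of spread. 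Fix an enumeration $(s_n)_{n\in\omega}$ of all finite sequences and set $\delta^n:=\alpha_{s_n}$ whenever $\beta(s_n)=0$, and $\delta^n:=\alpha_{\langle\;\rangle}$ otherwise. Then $\forall n[\delta^n\in\mathcal{F}_\beta]$. Given $\alpha\in\mathcal{F}_\beta$, pick $n$ with $\forall\gamma\in\mathcal{F}_\beta[\overline\alpha n\sqsubset\gamma\rightarrow\alpha=\gamma]$; taking $s:=\overline\alpha n$ we have $\beta(s)=0$, and $\alpha_s\in\mathcal{F}_\beta$ extends $s$, so $\alpha_s=\alpha$ by isolation, so $\alpha$ appears among the $\delta^m$.

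I do not expect a truly hard step: the spread machinery already in place --- decidable points are isolated, continuous choice extends to spreads, and the standard $\mu p$-construction gives an element of $\mathcal{F}_\beta$ extending any admissible node --- combines cleanly to give both directions. The most delicate observation is that, in the backward direction, decidability of equality alone would not suffice to enumerate $\mathcal{F}_\beta$; the upgrade from decidability to isolation (Lemma \ref{L:defisol}) is what pins each point of $\mathcal{F}_\beta$ down to some finite node of the spread, so that enumerating those nodes enumerates all of $\mathcal{F}_\beta$.
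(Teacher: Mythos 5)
Your proof is correct, and the forward direction coincides with the paper's (Lemma \ref{L:fccspreads} applied to $\forall \alpha \in \mathcal{F}_\beta\exists n[\alpha=\delta^n]$, then injectivity read off from $\alpha=\delta^{\varphi(\alpha)}$). The backward direction, however, takes a genuinely different route. The paper works directly with the neighbourhood-function structure of $\varphi$: for each minimal node $s$ with $\beta(s)=0$, $\varphi(s)\neq 0$ and $\forall t\sqsubset s[\varphi(t)=0]$, every $\alpha$ in $\mathcal{F}_\beta$ passing through $s$ receives the same value $\varphi(\alpha)$, so by injectivity the whole of $\mathcal{F}_\beta\cap s$ collapses to a single point, and one enumerates $\mathcal{F}_\beta$ by choosing one path above each such $s$. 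You instead route through Theorem \ref{T:decsprintoomega} (easy direction) to get $(\mathcal{F}_\beta,=)\models\mathsf{\forall x}[D(\mathsf{x})]$, then through Lemma \ref{L:defisol}(i) to conclude that every point is isolated, and then enumerate the leftmost paths above \emph{all} admissible nodes; isolation guarantees each $\alpha$ equals the leftmost path above $\overline\alpha n$ for suitable $n$. Both arguments are sound, and neither is circular (Theorem \ref{T:decsprintoomega} precedes this lemma and does not depend on it). The trade-off is that your detour through Lemma \ref{L:defisol}(i) imports an application of Brouwer's Continuity Principle (via Lemma \ref{L:bcpspreads}), whereas the paper's backward direction is continuity-free: the needed uniformity is already encoded in the hypothesis that $\varphi$ is given as a neighbourhood function. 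Your version is shorter on the page because it reuses established results; the paper's version shows the implication holds on weaker grounds. Your closing observation --- that decidability must be upgraded to isolation to pin each point to a finite node --- is accurate for your route, but note that the paper sidesteps this entirely by reading the isolating node off $\varphi$ itself.
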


\begin{proof} Assume $\mathcal{F}_\beta$ is enumerable and $\beta(\langle\;\rangle)=0$.\\ 
Find $\delta$ such that $\forall n[\delta^n\in \mathcal{F}_\beta]$ and $\forall \alpha \in \mathcal{F}_\beta\exists n[\alpha =\delta^n]$. \\Using Lemma \ref{L:fccspreads}, find $\varphi:\mathcal{F}_\beta\rightarrow \omega$ such that $\forall \alpha \in \mathcal{F}_\beta[\alpha=\delta^{\varphi(\alpha)}]$.\\ Note: $\varphi:\mathcal{F}_\beta \hookrightarrow \omega$. 

\medskip Now assume: $\varphi:\mathcal{F}_\beta \hookrightarrow \omega$. 

We make a preliminary observation.

Let $s, n$ be given such that  $\beta(s) =0$ and $\varphi(s)=n+1$ and $\forall t\sqsubset s[\varphi(t)=0]$. \\Note: $\forall \alpha \in \mathcal{F}_\beta[s\sqsubset \alpha\rightarrow \varphi(\alpha)=n]$ and, therefore: \\$\forall \alpha \in \mathcal{F}_\beta\forall \delta \in \mathcal{F}_\beta[(s\sqsubset \alpha\;\wedge\; s\sqsubset \delta)\rightarrow \alpha =\delta]$. 

\smallskip Now let $\gamma$ be the element of $\mathcal{F}_\beta$ satisfying $\forall n[\gamma(n):=\mu p[\beta(\overline \gamma n\ast \langle p\rangle)=0]]$.
\\ Define $\delta$ such that, for all $s$, \textit{if} $\beta(s) =0$ and $\varphi(s)\neq 0$ and $\forall t\sqsubset s[\varphi(t)=0]$, then $s\sqsubset \delta^s$ and $\delta^s \in \mathcal{F}_\beta$, and \textit{if not}, then $\delta^s=\gamma$. 

Note: $\forall s[\delta^s \in \mathcal{F}_\beta]$ and $\forall \alpha \in \mathcal{F}_\beta\exists s[\alpha =\delta^s]$. 
\end{proof}

\begin{corollary}\label{C:decenum} Assume $Spr(\beta)$. \\$(\mathcal{F}_\beta, =)\models \forall\mathsf{x}[D(\mathsf{x})]$ if and only if $\mathcal{F}_\beta$ is enumerable. \end{corollary}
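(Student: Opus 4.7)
The plan is to derive this directly by chaining the two preceding results, which between them have already done all the work. Theorem \ref{T:decsprintoomega} tells us that $(\mathcal{F}_\beta,=)\models \forall \mathsf{x}[D(\mathsf{x})]$ if and only if there exists an injective map $\varphi:\mathcal{F}_\beta\hookrightarrow\omega$, while Lemma \ref{L:enumer} tells us that $\mathcal{F}_\beta$ is enumerable if and only if there exists an injective map $\varphi:\mathcal{F}_\beta\hookrightarrow \omega$. Thus both conditions are equivalent to the same intermediate statement, and transitivity of the biconditional gives the corollary.

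Concretely, I would write: first assume $(\mathcal{F}_\beta,=)\models \forall\mathsf{x}[D(\mathsf{x})]$; by Theorem \ref{T:decsprintoomega} pick $\varphi:\mathcal{F}_\beta\hookrightarrow\omega$; then by Lemma \ref{L:enumer} conclude that $\mathcal{F}_\beta$ is enumerable. Conversely, if $\mathcal{F}_\beta$ is enumerable, Lemma \ref{L:enumer} produces some $\varphi:\mathcal{F}_\beta\hookrightarrow\omega$, and Theorem \ref{T:decsprintoomega} then delivers $(\mathcal{F}_\beta,=)\models\forall\mathsf{x}[D(\mathsf{x})]$.

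There is essentially no obstacle here; the corollary is a pure bookkeeping step. The only mildly delicate point is the empty case, where $\beta(\langle\;\rangle)\neq 0$ and $\mathcal{F}_\beta=\emptyset$: then both sides hold vacuously (enumerability by the first clause of its definition, decidability because there are no elements to test), and the quoted Theorem and Lemma handle this case silently as well, so the chain of equivalences remains intact.
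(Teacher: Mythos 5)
Your proposal is correct and is exactly the paper's own proof: the paper's proof of this corollary reads, in full, ``Use Theorem \ref{T:decsprintoomega} and Lemma \ref{L:enumer}.'' Your remark on the empty case is a harmless (and accurate) elaboration.
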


\begin{proof} Use Theorem \ref{T:decsprintoomega} and Lemma \ref{L:enumer}. \end{proof}
\section{Spreads with exactly one undecidable point}
\begin{definition} We let $\tau_2$ be the element of $\mathcal{C}$ satisfying: \\$\forall s[\tau_2(s)=0\leftrightarrow \forall i<length(s)[s(i)<2\;\wedge\; \bigl(i+1<length(s)\rightarrow s(i)\le s(i+1)\bigr)]]$. \\We  define: $\mathcal{T}_2:=\mathcal{F}_{\tau_2}$.

 \end{definition}
 
 Note: $\tau_2$ is a spread-law and $\mathcal{T}_2$ is a spread.

 \smallskip
 Let us take a closer look at $\mathcal{T}_2$.

 Observe: $\forall \alpha[\alpha\in\mathcal{T}_2\leftrightarrow \forall i[\alpha(i)\le\alpha(i+1)<2]]$.
 
 For each $n$, we define $n^\ast:= \underline{\overline 0}n\ast\underline 1$. 
 
   \smallskip The infinite sequence $\underline 0, 0^\ast, 1^\ast, 2^\ast, \ldots$ is a  list of elements of $\mathcal{T}_2$ and a classical mathematician might think it is the list of all elements of $\mathcal{T}_2$. The intuitionistic mathematician knows better. He  defines $\alpha$ in $\mathcal{T}_2$ such that 
   $$\forall n[\alpha(n) =1\leftrightarrow \exists k\le n\forall i<99[d(k+i)=9]],$$ where $d:\mathbb{N}\rightarrow \{0,1,\ldots,9\}$ is the decimal expansion of $\pi$. As yet, one has  no proof of the statement `$\alpha =\underline 0$', as this statement implies: $\forall k\exists i<99]d(k+i)=9]$. As yet,  one also has  no proof of the statement: `$\exists n[\alpha= n^\ast]$' as this statement implies: $\exists n\forall i<99[d(n+i)=9]$. The statement that $\alpha$ occurs in the above list is a reckless one.

  For each $n$, $n^\ast$ is an isolated and a decidable point of $\mathcal{T}_2$, and   $\underline 0$ is a non-isolated and an undecidable point of $\mathcal{T}_2$.
   It follows, by Lemma \ref{L:defisol} and Corollary \ref{C:decenum}, that $\mathcal{T}_2$ is not an enumerable spread.
  In particular, the statement that the list $\underline 0, 0^\ast, 1^\ast, 2^\ast, \ldots$ is a complete list of the elements of $\mathcal{T}_2$, leads to a contradiction, as appears again from the following Theorem.

  \begin{theorem}\label{T:tau2} \begin{enumerate}[\upshape (i)] \item $\neg\forall\alpha\in\mathcal{T}_2[\alpha =\underline 0\;\vee\;\exists n[\alpha = n^\ast]]$. \item $\forall\alpha\in\mathcal{T}_2[\alpha \;\#\;\underline 0\rightarrow \exists n[\alpha = n^\ast]]$. \end{enumerate} \end{theorem}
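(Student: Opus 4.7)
For part (i), the plan is to derive a contradiction from $\forall \alpha\in\mathcal{T}_2[\alpha=\underline 0\;\vee\;\exists n[\alpha=n^\ast]]$ by applying the continuity principle at the point $\underline 0$. Define $R\subseteq \mathcal{T}_2\times\omega$ by $\alpha R k$ if and only if either $k=0$ and $\alpha=\underline 0$, or $k\geq 1$ and $\alpha=(k-1)^\ast$. The assumption yields $\forall\alpha\in\mathcal{T}_2\exists k[\alpha R k]$, so by Lemma \ref{L:bcpspreads} applied to $\underline 0$, we can find $m$ and $k$ such that $\forall\gamma\in\mathcal{T}_2[\overline{\underline 0}m\sqsubset\gamma\rightarrow \gamma R k]$. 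Now split on $k$: if $k=0$, then every $\gamma\in\mathcal{T}_2$ extending $\overline{\underline 0}m$ equals $\underline 0$, which is refuted by $\gamma:=m^\ast$ (which is in $\mathcal{T}_2$, extends $\overline{\underline 0}m$, and is apart from $\underline 0$); if $k\geq 1$, then every such $\gamma$ equals $(k-1)^\ast$, which is refuted by $\gamma:=\underline 0$ (which lies in $\mathcal{T}_2$ and extends $\overline{\underline 0}m$ but differs from $(k-1)^\ast$ at position $k-1$).

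For part (ii), the approach is direct and uses the very restrictive shape of elements of $\mathcal{T}_2$. Assume $\alpha\in\mathcal{T}_2$ and $\alpha\;\#\;\underline 0$. By definition of $\#$, we obtain some $k$ with $\alpha(k)\neq 0$, and since $\alpha(k)<2$, we have $\alpha(k)=1$. Using the fact that equality of natural numbers is decidable, let $n:=\mu k[\alpha(k)=1]$, so that $\alpha(i)=0$ for all $i<n$ and $\alpha(n)=1$. The monotonicity constraint $\alpha(j)\leq\alpha(j+1)<2$ forces $\alpha(j)=1$ for every $j\geq n$ by induction on $j$. Hence $\alpha=\underline{\overline 0}n\ast\underline 1=n^\ast$, as required.

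I expect neither part to present a real obstacle: (i) is a standard application of the continuity principle for spreads, and (ii) reduces to a trivial structural observation about binary non-decreasing sequences together with the constructive extraction of a witness from an apartness. The only subtle point is choosing in (i) to apply continuity at the specific point $\underline 0$, because that is the unique non-isolated element of $\mathcal{T}_2$ and is precisely the place where both disjuncts must be simultaneously ruled out by the two natural test elements $m^\ast$ and $\underline 0$ itself.
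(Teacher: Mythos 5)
Your proposal is correct and follows essentially the same route as the paper: part (i) is the paper's application of the continuity principle for spreads at $\underline 0$ (you merely make explicit the encoding of the disjunction-plus-existential into a single numerical witness $k$, which is implicit in the paper's ``find $m,n$ such that either \ldots or \ldots''), and part (ii) is the paper's argument of extracting $n:=\mu m[\overline\alpha(m+1)\perp\underline 0]$ and using the monotonicity constraint of $\mathcal{T}_2$.
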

  
  \begin{proof} (i) Assume $\forall\alpha\in\mathcal{T}_2[\alpha =\underline 0\;\vee\;\exists n[\alpha = n^\ast]]$. Using Lemma \ref{L:bcpspreads}, find $m,n$ such that \textit{either} $\forall\alpha\in \mathcal{T}_2[\underline{\overline 0}m\sqsubset\alpha\rightarrow \alpha=\underline 0]$ \textit{or} $\forall\alpha\in \mathcal{T}_2[\underline{\overline 0}m\sqsubset\alpha\rightarrow \alpha=n^\ast]$. Note that both alternatives are false.
  
  Conclude: $\neg\forall\alpha\in\mathcal{T}_2[\alpha =\underline 0\;\vee\;\exists n[\alpha = n^\ast]]$.
  
  \smallskip 
  
  (ii) Let $\alpha$ in $\mathcal{T}_2$ be given such that $\alpha\;\#\;\underline 0$. Define $n:=\mu m[\overline \alpha(m+1)\perp \underline 0]$. Note: $\overline \alpha(n+1)=\overline{\underline 0}n\ast\langle 1 \rangle$ and $\alpha =n^\ast$. \end{proof}
  
\begin{definition} Assume $Spr(\beta)$. $\mathcal{F}_\beta$ is \emph{almost-enumerable} if and only if either $\mathcal{F}_\beta=\emptyset$ or $\exists \delta[\forall n[\delta^n\in\mathcal{F}_\beta]\;\wedge\;\forall \alpha\in \mathcal{F}_\beta\forall \varepsilon \exists n[\overline \alpha \varepsilon(n) = \overline{\delta^n}\varepsilon(n)]]$. \end{definition}

This definition deserves some explanation. If $\mathcal{F}_\beta$ is almost-enumerable and inhabited, we are able to come forward with an infinite sequence $\delta^0, \delta^1, \ldots$ of elements of $\mathcal{F}_\beta$ such that, for every $\alpha$ in $\mathcal{F}_\beta$, every attempt $\varepsilon$ to prove that $\alpha$ is apart from all elements of the infinite sequence $\delta^0, \delta^1, \ldots$, ($\varepsilon$  expresses the guess: $\forall n[\overline \alpha\varepsilon(n)\perp \overline{\delta^n}\varepsilon(n)]$), will positively fail.

Almost-enumerable spreads are studied in \cite[Section 9]{veldman2019}, where they are called \textit{almost-countable located and closed subsets of $\mathcal{N}$.}

\begin{theorem} $\mathcal{T}_2$ is almost-enumerable. \end{theorem}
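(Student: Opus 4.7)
The plan is to use the obvious candidate enumeration suggested by the discussion preceding Theorem~\ref{T:tau2}. I would define $\delta$ such that $\delta^0 = \underline 0$ and, for each $n$, $\delta^{n+1} = n^\ast = \underline{\overline 0}n\ast\underline 1$. Then $\forall n[\delta^n\in\mathcal{T}_2]$ is immediate from the definitions. It remains to verify the second clause of almost-enumerability: given $\alpha\in\mathcal{T}_2$ and $\varepsilon\in\mathcal{N}$, one must produce some $n$ with $\overline\alpha \varepsilon(n)=\overline{\delta^n}\varepsilon(n)$.

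The key observation is that, although equality on $\mathcal{T}_2$ is undecidable, equality of finite sequences is decidable, and $\mathcal{T}_2$ consists of weakly increasing $0,1$-sequences. Given $\alpha$ and $\varepsilon$, I would set $k:=\varepsilon(0)$ and consider the finite sequence $\overline\alpha k$.  Either $\overline\alpha k = \underline{\overline 0}k$, in which case $\overline\alpha \varepsilon(0) = \overline{\delta^0}\varepsilon(0)$ and one may take $n:=0$; or there exists $i<k$ with $\alpha(i)=1$.  Let $i_0$ be the least such $i$; then $\alpha(j)=0$ for $j<i_0$ and, because $\alpha\in\mathcal{T}_2$ forces $\alpha(j)\le \alpha(j+1)<2$, also $\alpha(j)=1$ for all $j\ge i_0$.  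Hence $\alpha=i_0^\ast = \delta^{i_0+1}$ in the literal sense that $\forall j[\alpha(j)=\delta^{i_0+1}(j)]$, and one may take $n:=i_0+1$, for which $\overline\alpha \varepsilon(n) = \overline{\delta^n}\varepsilon(n)$ holds trivially (indeed for any prefix length).

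There is no real obstacle here; the only subtlety, which is the whole point of the \emph{almost}-enumerable notion, is that one does \emph{not} need to decide whether $\alpha=\underline 0$ or $\alpha=m^\ast$ for some $m$. One only needs, relative to the finite amount of information $\varepsilon(0)$ asks about, to exhibit one $\delta^n$ that agrees with $\alpha$ that far, and the decidable finite inspection of $\overline\alpha\varepsilon(0)$ suffices. So the proof amounts to the case split above, producing $n$ uniformly from $\alpha$ and $\varepsilon$ by a primitive-recursive procedure.
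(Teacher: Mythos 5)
Your proof is correct and follows essentially the same route as the paper's: the same enumeration $\delta^0=\underline 0$, $\delta^{n+1}=n^\ast$, followed by the decidable inspection of $\overline\alpha\varepsilon(0)$, taking $n=0$ if $\alpha$ agrees with $\underline 0$ that far and otherwise locating the least $i_0$ with $\alpha(i_0)=1$ so that $\alpha=\delta^{i_0+1}$ outright. Your write-up is in fact slightly more explicit than the paper's about why the second case yields literal equality with some $\delta^{n+1}$, which is a welcome clarification rather than a deviation.
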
 \begin{proof} Define $\delta$ such that $\delta^0=\underline 0$ and, for each $n$. $\delta^{n+1}=n^\ast =\underline{\overline 0}n\ast\underline 1$. Note: $\forall n[\delta^n \in \mathcal{T}_2]$. Let $\varepsilon$ be given. If $\overline\alpha\varepsilon(0)=\overline{\delta^0}\varepsilon(0)$, we are done. If not, then $\alpha\perp\underline 0$ and we may determine $n$ such that $\alpha=\delta^{n+1}$ and $\overline \alpha \varepsilon(n+1)=\overline{\delta^{n+1}}\varepsilon(n+1)$. \end{proof}

\begin{axiom}[Second Axiom of Countable Choice]\label{ax:countable choice}$\;\;$ \\ For every $R\subseteq \mathbb{N}\times \mathcal{N}$, if $\forall n\exists \alpha[nR\alpha]$, then $\exists \alpha\forall n[n R \alpha^n]$. \end{axiom}
\begin{theorem} \begin{enumerate}[\upshape (i)] \item $(\mathcal{T}_2,=)\models\exists\mathsf{x}[\neg D(\mathsf{x})\;\wedge\;\forall \mathsf{y}[AP(\mathsf{x, y})\rightarrow D(\mathsf{y})]]$. 
\item For all $\beta$ such that  $Spr(\beta)$,\\ if $(\mathcal{F}_\beta,=)\models \exists\mathsf{x}[\neg D(\mathsf{x})\;\wedge\;\forall \mathsf{y}[AP(\mathsf{x, y})\rightarrow D(\mathsf{y})]]$, then $\mathcal{F}_\beta$ embeds into $\mathcal{T}_2$. \end{enumerate} \end{theorem}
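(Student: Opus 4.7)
Both parts of the statement turn on the fact that $\mathcal{T}_2$ consists of the single non-isolated point $\underline 0$ together with the decidable `fringe' $\{n^\ast\mid n\in\omega\}$. For (i), I would take $\mathsf{x}:=\underline 0$. To see $\neg D(\underline 0)$ in $(\mathcal{T}_2,=)$, assume $\forall\gamma\in\mathcal{T}_2[\underline 0=\gamma\;\vee\;\neg(\underline 0=\gamma)]$ and apply Lemma \ref{L:bcpspreads}: one obtains $m$ such that either $\forall\gamma\in\mathcal{T}_2[\underline{\overline 0}m\sqsubset\gamma\rightarrow \underline 0=\gamma]$, refuted by $\gamma:=m^\ast$, or $\forall\gamma\in\mathcal{T}_2[\underline{\overline 0}m\sqsubset\gamma\rightarrow \neg(\underline 0=\gamma)]$, refuted by $\gamma:=\underline 0$. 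For the second conjunct, if $\gamma\in\mathcal{T}_2$ satisfies $AP(\underline 0,\gamma)$, then by Theorem \ref{T:apartdef} $\gamma\;\#\;\underline 0$, so by Theorem \ref{T:tau2}(ii) $\gamma=n^\ast$ for some $n$; any $\delta\in\mathcal{T}_2$ extending $\overline{n^\ast}(n+2)$ equals $n^\ast$, so $n^\ast$ is isolated and therefore decidable by Lemma \ref{L:defisol}.

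For (ii), fix $\alpha_0\in\mathcal{F}_\beta$ witnessing the existential. By Theorem \ref{T:apartdef} and Lemma \ref{L:defisol}, every $\gamma\in\mathcal{F}_\beta$ with $\gamma\;\#\;\alpha_0$ is an isolated point of $\mathcal{F}_\beta$. Let $S$ be the decidable set of minimal `divergence nodes' $s$ with $\beta(s)=0$, $s\perp\overline{\alpha_0}\bigl(length(s)\bigr)$, and $\overline s\bigl(length(s)-1\bigr)\sqsubset\alpha_0$, enumerated as $s_0,s_1,\ldots$. For each $i$ the subspread $\mathcal{F}^i:=\mathcal{F}_\beta\cap\{\alpha\mid s_i\sqsubset\alpha\}$ consists solely of points apart from $\alpha_0$, so $(\mathcal{F}^i,=)\models\forall\mathsf{x}[D(\mathsf{x})]$; by Corollary \ref{C:decenum} and Theorem \ref{T:decsprintoomega} it embeds into $\omega$. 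Using Axiom \ref{ax:countable choice} I pick, uniformly in $i$, an injection $\psi_i:\mathcal{F}^i\hookrightarrow\omega$, arranged (following the proof of Theorem \ref{T:decsprintoomega}) so that $\psi_i(\alpha)$ codes an initial segment of $\alpha$ long enough both to determine $\alpha$ within $\mathcal{F}^i$ and to exceed $length(s_i)$.

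With these data I define $\varphi:\mathcal{F}_\beta\rightarrow\mathcal{N}$ by declaring $(\varphi|\alpha)(n)=1$ iff there exist $i,j$ with $s_i\sqsubset\overline\alpha(n+1)$, with $\psi_i(\alpha)=j$ already determined by $\overline\alpha(n+1)$, and with $\langle i,j\rangle\le n$, where $\langle\cdot,\cdot\rangle$ is a fixed injective pairing. Since this condition is monotone in $n$, $\varphi|\alpha$ is a nondecreasing $\{0,1\}$-sequence in $\mathcal{T}_2$; if $\alpha=\alpha_0$ no $s_i$ is a prefix and $\varphi|\alpha=\underline 0$, whereas if $\alpha\;\#\;\alpha_0$ there is a unique $i$ with $s_i\sqsubset\alpha$ and one checks $\varphi|\alpha=\langle i,\psi_i(\alpha)\rangle^\ast$. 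For injectivity, let $\alpha\;\#\;\delta$ in $\mathcal{F}_\beta$; by cotransitivity of $\;\#\;$ in $\mathcal{N}$ with respect to $\alpha_0$ I may assume $\alpha\;\#\;\alpha_0$, so $\varphi|\alpha=k_\alpha^\ast$ with $k_\alpha$ computable from $\alpha$. Applying cotransitivity once more to $\varphi|\alpha\;\#\;\underline 0$ against $\varphi|\delta$, either $\varphi|\delta\;\#\;\varphi|\alpha$, which is what we want, or $\varphi|\delta\;\#\;\underline 0$; in the latter case $\delta\;\#\;\alpha_0$ and $\varphi|\delta=k_\delta^\ast$, with $k_\alpha\neq k_\delta$ forced either by distinct branch-indices or by injectivity of $\psi_i$ on a common branch.

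The principal obstacle is that the definition of $\varphi$ must be uniform on $\mathcal{F}_\beta$: we cannot decide, for a given $\alpha$, whether $\alpha=\alpha_0$ or $\alpha\;\#\;\alpha_0$, so the construction must continuously postpone committing to a value $k^\ast$ until both a divergence prefix $s_i$ and enough of $\alpha$ to compute $\psi_i(\alpha)$ have been observed. Making $\psi_i(\alpha)$ itself the dominant initial-segment code is the technical device that locates the switch from $0$ to $1$ in $\varphi|\alpha$ exactly at height $\langle i,\psi_i(\alpha)\rangle$, which is what secures injectivity.
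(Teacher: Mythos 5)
Your proof is correct and follows essentially the same route as the paper: for (i) the witness $\underline 0$ with Theorem \ref{T:apartdef}, Theorem \ref{T:tau2}(ii) and Lemma \ref{L:defisol}; for (ii) the decomposition of $\mathcal{F}_\beta$ into the non-isolated witness plus the subspreads above the divergence nodes, each embedded into $\omega$ via Theorem \ref{T:decsprintoomega} and Axiom \ref{ax:countable choice}, and then a map into $\mathcal{T}_2$ sending $\alpha_0$ to $\underline 0$ and each other point to $k^\ast$ for $k$ a pair of branch index and injection value. Your explicit handling of the uniformity of $\varphi$ (postponing the switch from $0$ to $1$ until both the divergence node and the value $\psi_i(\alpha)$ are visible) is a welcome elaboration of a point the paper's definition of $\psi$ leaves implicit.
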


\begin{proof} (i)  $\underline 0$ is not an isolated point of $\mathcal{T}_2$, and, therefore, not a decidable point of $\mathcal{T}_2$. 
Also, by Theorem \ref{T:tau2}(ii), $\forall \alpha \in \mathcal{T}_2[\alpha\;\#\;\underline 0\rightarrow \exists n[\alpha= n^\ast]]$, and, 
 for each $n$, for each $\alpha$ in $\mathcal{T}_2$, $\alpha = n^\ast\leftrightarrow \overline{\underline 0}n\ast\langle 1\rangle\sqsubset \alpha$, so one may decide: $\alpha = n^\ast$ or $\neg(\alpha = n^\ast)$, and: $n^\ast$ is a decidable point of $\mathcal{T}_2$.  We thus see:
 $(\mathcal{T}_2,=)\models\neg D(\mathsf{x})\;\wedge\;\forall \mathsf{y}[AP(\mathsf{x, y})\rightarrow D(\mathsf{y})][\underline 0]$, and are done. 
 
 \smallskip (ii)  Assume: $Spr(\beta)$ and $(\mathcal{F}_\beta,=)\models \exists\mathsf{x}[\neg D(\mathsf{x})\;\wedge\;\forall \mathsf{y}[AP(\mathsf{x, y})\rightarrow D(\mathsf{y})]]$.
 
 Find $\alpha$ in $\mathcal{F}_\beta$ such that $\alpha$ is not an isolated point of $\mathcal{F}_\beta$. 
 
 Note: for each $s$ such that $\beta(s)=0$, the set $\mathcal{F}_\beta\cap s:=\{\delta \in \mathcal{F}_\beta\mid s\sqsubset \delta\}$ is a spread, and, if $s\perp\alpha$, then $\mathcal{F}_\beta\cap s$ consists of isolated points of $\mathcal{F}_\beta\cap s$ only,  and thus, by Theorem \ref{T:decsprintoomega}, embeds into $\omega$.  
 
 Using Axiom \ref{ax:countable choice}, we find $\varphi$ such that, for each $s$, if $\beta(s)=0$ and there exist $n, i$ such that $s=\overline \alpha n\ast\langle i\rangle$ and $i\neq \alpha(n)$, then $\varphi^s:\mathcal{F}_\beta\cap s\hookrightarrow \omega$.  
 
 We now  define $\psi:\mathcal{F}_\beta\rightarrow \mathcal{T}_2$ such that $\psi|\alpha =\underline 0$ and, for each $\delta$ in $\mathcal{F}_\beta$, if $\delta\;\#\;\alpha$, then $\psi|\delta=\overline{\underline 0}\bigl(\overline \delta n, \varphi^{\overline \delta n}(\delta)\bigr)\ast \underline 1$ where $n:=\mu i[\overline \delta i \perp \alpha]$. \end{proof}
 
 \section{More and more undecidable points: the toy spreads}\label{S:moreundec}\begin{definition} For each $n$, we let $\tau_n$ be the element of $\mathcal{C}$ satisfying: \\$\forall s[\tau_n(s)=0\leftrightarrow \forall i<length(s)[s(i)<n\;\wedge\; \bigl(i+1<length(s)\rightarrow s(i)\le s(i+1)\bigr)]]$. \\We also define: $\mathcal{T}_n:=\mathcal{F}_{\tau_n}$.

 \end{definition}
 
 For each $n$, $\tau_n$ is a spread-law and $\mathcal{T}_n$  and 
 $\mathcal{T}_n=\{\alpha\mid \forall i[\alpha(i)\le\alpha(i+1)<n]\}$ is a spread. 
 
 \smallskip In this paper, the spreads $\mathcal{T}_0, \mathcal{T}_1, \ldots$ will be called the \textit{toy spreads}.  
 
 Note: $\mathcal{T}_0=\emptyset$ and $\mathcal{T}_1=\{\underline 0\}$.

 \begin{definition} For each $s\neq \langle\;\rangle$, we let $s^\dag$ be the element of $\mathcal{N}$ satisfying $s\sqsubset s^\dag$ and $\forall i\ge 
 length(s)[s^\dag(i)=s^\dag(i-1)]$.
  \end{definition}
  
  Note that, for each $n$, for each $s$, if $s\neq\langle\;\rangle$ and $\tau_n(s)=0$, then $s^\dag \in \mathcal{T}_n$.

 \begin{theorem} For each $n>0$, $\mathcal{T}_n$ is almost-enumerable. \end{theorem}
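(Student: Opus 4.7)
The plan is to prove the theorem by induction on $n$, generalizing the construction used for $\mathcal{T}_2$. The base case $n=1$ is immediate: since $\mathcal{T}_1 = \{\underline 0\}$, setting $\delta^k := \underline 0$ for every $k$ trivially verifies the almost-enumerability clause, because for the unique $\alpha \in \mathcal{T}_1$ and every $\varepsilon$ the equation $\overline\alpha \varepsilon(0) = \overline{\delta^0}\varepsilon(0)$ holds.

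For the inductive step, I would suppose $\mathcal{T}_n$ is almost-enumerable via some enumeration $\delta'$ and build an enumeration $\delta$ for $\mathcal{T}_{n+1}$. The key observation is the following recursive description: every $\alpha \in \mathcal{T}_{n+1}$ is either $\underline 0$ or eventually takes a value $\geq 1$, in which case, letting $k$ be the first index with $\alpha(k) \geq 1$, the shifted-and-decremented sequence $\alpha''(i) := \alpha(i+k) - 1$ belongs to $\mathcal{T}_n$. Using a standard pairing $\langle \cdot, \cdot \rangle$ of $\omega \times \omega$ onto $\omega$, I would set $\delta^0 := \underline 0$ and, for each pair $(k, m)$, let $\delta^{\langle k, m\rangle + 1}$ be the element of $\mathcal{N}$ taking the value $0$ on positions $0, \ldots, k-1$ and the value $(\delta')^m(i - k) + 1$ on position $i \geq k$. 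A short check shows every such $\delta^j$ is non-decreasing with values bounded by $n$, and therefore lies in $\mathcal{T}_{n+1}$.

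To verify the almost-enumerability clause, fix $\alpha \in \mathcal{T}_{n+1}$ and $\varepsilon$: decidable equality of finite strings lets one decide whether $\overline\alpha \varepsilon(0) = \overline{\underline 0}\varepsilon(0)$; if so, index $0$ works. Otherwise, one extracts the least $k < \varepsilon(0)$ with $\alpha(k) \geq 1$, forms $\alpha''$ as above, and sets $\varepsilon''(m) := \max\bigl(\varepsilon(\langle k, m \rangle + 1) - k,\, 0\bigr)$. Applying the induction hypothesis to $\alpha''$ and $\varepsilon''$ yields some $m$ with $\overline{\alpha''}\varepsilon''(m) = \overline{(\delta')^m}\varepsilon''(m)$, and unpacking the definitions of $\alpha''$ and $\delta^{\langle k, m\rangle + 1}$ gives the required agreement $\overline\alpha \varepsilon(\langle k, m \rangle + 1) = \overline{\delta^{\langle k, m \rangle + 1}}\varepsilon(\langle k, m \rangle + 1)$. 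The main obstacle I anticipate is purely bookkeeping --- juggling the pairing, the shift by $k$, and the composition encoded in $\varepsilon''$ consistently --- since the dichotomy $\alpha = \underline 0$ versus $\alpha\;\#\;\underline 0$ is reduced to a decidable comparison of finite prefixes of length $\varepsilon(0)$, exactly as in the argument already given for $\mathcal{T}_2$.
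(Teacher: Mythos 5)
Your proof is correct, but it takes a genuinely different route from the paper's. The paper fixes $n$, takes as its enumeration the eventually constant sequences $s^\dag$ for all admissible non-empty $s$ (indexed directly by the codes $s$, with $\underline 0$ assigned to the inadmissible indices), and verifies the almost-enumerability clause by a single backwards induction \emph{inside} the fixed spread $\mathcal{T}_n$, on the value $k<n$ attained by $\alpha$: one shows that every $\alpha$ attaining a value $\ge k$ meets the enumeration on every attempt $\varepsilon$, the top case $k=n-1$ being immediate since such an $\alpha$ is itself eventually constant, and the step from $k+1$ to $k$ using exactly your prefix-comparison dichotomy. You instead induct on $n$ externally, rebuilding the enumeration of $\mathcal{T}_{n+1}$ from one for $\mathcal{T}_n$ via the decomposition of any $\alpha$ with $\alpha\;\#\;\underline 0$ into a block of $0$'s followed by a shifted-and-incremented member of $\mathcal{T}_n$. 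The two enumerations in fact coincide (your recursion unfolds to exactly the eventually constant sequences), and both verifications hinge on the same constructively essential move, which you correctly identify: the undecidable alternative $\alpha=\underline 0$ versus $\alpha\;\#\;\underline 0$ is replaced by the decidable comparison of prefixes of length $\varepsilon(0)$. The paper's version buys a closed-form indexing and no pairing-function bookkeeping; yours buys modularity, isolating the structural fact that $\mathcal{T}_{n+1}$ consists of $\underline 0$ together with shifted copies of $\mathcal{T}_n$, at the cost of the reindexing $\varepsilon''(m)=\max\bigl(\varepsilon(\langle k,m\rangle+1)-k,\,0\bigr)$ needed to transport an attempt on $\mathcal{T}_{n+1}$ down to one on $\mathcal{T}_n$ (your computation of which is right). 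One cosmetic caution: your notation $\alpha''$ collides with the paper's reserved use of $\alpha''$ for the second projection of the values of $\alpha$.
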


\begin{proof} Let $n>0$ be given. Define $\delta$ such that, for each $s$, \textit{if} $s\neq\langle \;\rangle$ and $\tau_n(s)=0$, then $\delta^s=s^\dag$, and \textit{if not}, then $\delta^s = \underline 0$.  

We claim: $\forall \alpha\in\mathcal{T}_n\forall \varepsilon \exists s[\overline \alpha\varepsilon(s)=\overline{\delta^s}\varepsilon(s)]$. 

We establish this claim by proving, for each $k<n$, \\$\forall \alpha\in\mathcal{T}_n[\exists i[\alpha(i)\ge k]\rightarrow\forall \varepsilon \exists s[\overline \alpha\varepsilon(s)=\overline{\delta^s}\varepsilon(s)]]$, and we do so by backwards induction, starting with the case $k=n-1$. 

\smallskip The case $k=n-1$ is treated  as follows.  If $\exists i[\alpha(i)=n-1]$, find\\ $i_0:=\mu i[\alpha(i)=n-1]$ and consider $s:=\overline \alpha(i_0+1)$. \\Note: $\alpha=s^\dag=\delta^s$ and, therefore, for every $\varepsilon$: $\overline \alpha\varepsilon(s)=\overline{\delta^s}\varepsilon(s)$. 

\smallskip Now assume $k<n-1$ is given such that   \\$\forall \alpha\in\mathcal{T}_n[\exists i[\alpha(i)\ge k+1]\rightarrow\forall \varepsilon \exists s[\overline \alpha\varepsilon(s)=\overline{\delta^s}\varepsilon(s)]]$.

We have to prove: $\forall \alpha\in\mathcal{T}_n[\exists i[\alpha(i)=k]\rightarrow\forall \varepsilon \exists s[\overline \alpha\varepsilon(s)=\overline{\delta^s}\varepsilon(s)]]$.  

Let $\alpha$ be given such that $\exists i[\alpha(i)=k]$. Let also $\varepsilon$ be given.

Define $i_0:=\mu i[\alpha(i)=k]$ and define $s:=\overline\alpha(i_0+1)$.

 There are two cases to consider.

\textit{Case (i)}: $\overline \alpha\varepsilon(s)=\overline{s^\dag}\varepsilon(s)=\overline{\delta^s}\varepsilon(s)$. We are done.

\textit{Case (ii)}: $\overline \alpha\varepsilon(s)\perp\overline{s^\dag}\varepsilon(s)$. Then $\exists i<\varepsilon(s) [\alpha(i)\ge k+1]$. 

Using the induction hypothesis, we conclude: $\exists s[\overline \alpha\varepsilon(s)=\overline{\delta^s}\varepsilon(s)]$. \end{proof}

\newpage
\begin{theorem}\label{T:taun} $\;$
\begin{enumerate}[\upshape (i)] \item For each $n$, for all $\alpha$ in $\mathcal{T}_n$, $\alpha \in \mathcal{I}(\mathcal{T}_n)$ if and only if $\exists m[\alpha(m)+1=n]$. \item For each $n$, $\mathcal{T}_{n+1}\setminus \mathcal{I}(\mathcal{T}_{n+1}) = \mathcal{T}_n=\mathcal{L}(\mathcal{T}_{n+1})$. \item For each $n$, $\mathcal{T}_n=\{\alpha \in \mathcal{T}_{n+1}\mid (\mathcal{T}_{n+1},=)\models \neg D[\alpha]\}$.  \end{enumerate}\end{theorem}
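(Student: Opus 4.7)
\smallskip
The plan is to prove (i) directly from the description of $\mathcal{T}_n$ as the spread of non-decreasing sequences bounded by $n-1$; (ii) will then follow by computation, and (iii) by combining (ii) with Lemma \ref{L:defisol}. For the easy direction of (i), if $\alpha\in\mathcal{T}_n$ and $\alpha(m)+1=n$, then $\alpha(i)=n-1$ for every $i\ge m$, so $\overline{\alpha}(m+1)$ determines $\alpha$ among members of $\mathcal{T}_n$ and $\alpha\in\mathcal{I}(\mathcal{T}_n)$. For the converse, let $n_0\ge 1$ witness isolation (any witness can be replaced by a larger one), set $k:=\alpha(n_0-1)$, and consider $\gamma_1:=\bigl(\overline{\alpha}(n_0)\bigr)^\dag$, which lies in $\mathcal{T}_n$ and therefore equals $\alpha$. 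Now decide between $k+1<n$ and $k+1=n$: the first alternative would give $\gamma_2:=\bigl(\overline{\alpha}(n_0)\ast\langle k+1\rangle\bigr)^\dag$, again in $\mathcal{T}_n$ and so equal to $\alpha$, contradicting $\gamma_1\neq\gamma_2$; hence $k+1=n$ and $\alpha(n_0-1)+1=n$ gives the required $m$.

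For (ii), $\mathcal{T}_{n+1}\setminus\mathcal{I}(\mathcal{T}_{n+1})=\mathcal{T}_n$ unfolds using (i): for $\alpha\in\mathcal{T}_{n+1}$, non-isolation is $\neg\exists m[\alpha(m)=n]$, and since $\alpha(m)\le n$ this is equivalent to $\forall m[\alpha(m)<n]$, i.e.\ $\alpha\in\mathcal{T}_n$. For $\mathcal{T}_n\subseteq\mathcal{L}(\mathcal{T}_{n+1})$, given $\alpha\in\mathcal{T}_n$ and $k$, the sequence $\delta:=\bigl(\overline{\alpha}(k)\ast\langle n\rangle\bigr)^\dag$ lies in $\mathcal{T}_{n+1}$, extends $\overline{\alpha}k$, and is apart from $\alpha$ since $\delta(k)=n>\alpha(k)$. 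For the reverse inclusion, let $\alpha\in\mathcal{L}(\mathcal{T}_{n+1})$ and $m$ be given; if $\alpha(m)=n$, then any $\delta\in\mathcal{T}_{n+1}$ extending $\overline{\alpha}(m+1)$ has $\delta(i)=n$ for $i\ge m$, and $\alpha$ itself is constantly $n$ from position $m$ onward, so $\delta=\alpha$, contradicting the apartness from the limit-point property; hence $\alpha(m)<n$ for every $m$, so $\alpha\in\mathcal{T}_n$.

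Part (iii) is then immediate: by Lemma \ref{L:defisol}, $(\mathcal{T}_{n+1},=)\models D[\alpha]$ is equivalent to $\alpha\in\mathcal{I}(\mathcal{T}_{n+1})$, so the set in question coincides with $\mathcal{T}_{n+1}\setminus\mathcal{I}(\mathcal{T}_{n+1})$, which equals $\mathcal{T}_n$ by (ii). The only intuitionistically delicate point is the converse of (i): where a classical argument would casually say ``so $k$ must be the maximum value $n-1$'', the constructive substitute is the case split on the decidable alternatives $k+1<n$ and $k+1=n$, deriving a direct contradiction in the first case.
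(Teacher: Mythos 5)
Your proof is correct, and it follows exactly the route the paper intends: the paper leaves this proof to the reader with only the hint to use Lemma \ref{L:defisol}, which is precisely how you obtain (iii) from (ii), while (i) and (ii) are the direct verifications from the structure of $\mathcal{T}_n$ (with the decidable case split $k+1<n$ versus $k+1=n$ correctly supplying the constructive content in the converse of (i)).
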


\begin{proof} The proof uses Lemma \ref{L:defisol} and is left to the reader.\end{proof}

\begin{definition} We define an infinite sequence $D_0, D_1, \ldots$ of formulas, as follows.

$D_0:= \mathsf{\forall y[x=y\;\vee\;\neg(x=y)}]$,

$D_1:=\neg D_0(\mathsf{x}) \;\wedge\; \forall \mathsf{y}[\neg D_0(\mathsf{y})\rightarrow \bigl(\mathsf{x=y\;\vee\;\neg(x=y)\bigr)}]$,

$D_2:=\neg D_0(\mathsf{x})\;\wedge\; \neg D_1(\mathsf{x})\;\wedge\; \forall \mathsf{y}[\bigl(\neg D_0(\mathsf{y})\;\wedge\;\neg D_1(\mathsf{y})\bigr)\rightarrow \bigl(\mathsf{x=y\;\vee\;\neg(x=y)\bigr)}]$,

and, more generally for each $m>0$,

$D_{m}:=\bigwedge_{i<m}\neg D_i(\mathsf{x}) \;\wedge\; \forall \mathsf{y}[\bigl(\bigwedge_{i<m}\neg D_i(\mathsf{y})\bigr)\rightarrow \bigl(\mathsf{x=y\;\vee\;\neg(x=y)\bigr)}]$.

\medskip We also define, 
for each $m>0$, sentences $\psi_m$ and $\rho_m$, as follows:

$\psi_{m}:=\exists\mathsf{x}[ D_{m}(\mathsf{x})]$ and $\rho_m:=\exists\mathsf{x}[D_m(\mathsf{x})\;\wedge\;\forall \mathsf{y}[D_m(\mathsf{y})\rightarrow \mathsf{y=x}]]$.
\end{definition}

\begin{definition} Assume $Spr(\beta)$. $\alpha$ in $\mathcal{F}_\beta$ is a \emph{limit point of order $0$} of $\mathcal{F}_\beta$ if and only if $\alpha$ is an isolated point of $\mathcal{F}_\beta$.

For each $m$, $\alpha$ is a \emph{limit point of order $m+1$} of $\mathcal{F}_\beta$ if and only if, for each $p$, there exists a limit point $\gamma$ of order $m$ such that $\overline \alpha p\sqsubset \gamma$ and $\alpha \perp \gamma$.  \end{definition}

Assume $n>0$ and $\alpha\in\mathcal{T}_n$. Note the following:

\begin{enumerate}\item  $(\mathcal{T}_n, =)\models D_0[\alpha]$ if and only if  $\alpha$ is an isolated point  of $\mathcal{T}_n$ if and only if either: $n=1$ or: $n>1$ and  $\exists p[\alpha(p) = n-1]$.   
\item  $(\mathcal{T}_n, =)\models \neg D_0[\alpha]$ if and only if $\alpha$ is a limit point (of order 1) of $\mathcal{T}_n$ if and only if $n>1$ and $\alpha \in \mathcal{T}_{n-1}$. \item  $(\mathcal{T}_n, =)\models  D_1[\alpha]$ if and only if $\alpha$ is an isolated point among the  limit points (of order 1) of $\mathcal{T}_n$ if and only if   $n>1$ and $\alpha\in \mathcal{T}_{n-1}$ and $\exists p[\alpha(p) =n-2]$. \item
 $(\mathcal{T}_n, =)\models \neg D_0\;\wedge\;\neg D_1[\alpha]$, if and only if $\alpha$ is a limit point of order $2$  of $\mathcal{T}_n$ if and only if $n>2$ and $\alpha \in \mathcal{T}_{n-2}$.  
 
 \item
 For  each $m>0$,   $(\mathcal{T}_n, =)\models  D_{2}[\alpha]$ if and only if $\alpha$ is an isolated point among the limit points of order $2$ if and only if  $n>2$ and $\alpha\in \mathcal{T}_{n-2}$ and $\exists p[\alpha(p)= n-3]$.
\item For each $m>0$,  $(\mathcal{T}_n, =)\models \bigwedge_{i<m} \neg D_i[\alpha]$ if and only if $\alpha$ is a limit point of order $m$  of $\mathcal{T}_n$  if and only if $n>m$ and  $\alpha \in \mathcal{T}_{n-m}$.
\item For  each $m>0$,   $(\mathcal{T}_n, =)\models  D_{m}[\alpha]$ if and only if $\alpha$ is an isolated point among the limit points of order $m$ if and only if  $n>m$ and $\alpha\in \mathcal{T}_{n-m}$ and $\exists p[\alpha(p)= n-m-1]$.  \item For each $m>0$, $\mathcal{T}_n\models \psi_m$ if and only if $\mathcal{T}_n$ contains an isolated point of $\mathcal{T}_{n-m}$ if and only if $n>m$. \item For each $m>0$, $\mathcal{T}_n\models \rho_m$ if and only if $\mathcal{T}_n$ contains exactly one isolated point of $\mathcal{T}_{n-m}$ if and only if $\mathcal{T}_{n-m}=\{\underline 0\}$ if and only if $n=m+1$.
\end{enumerate}

After these preliminary observations, the following Theorem is easy to understand:
\begin{theorem}\label{T:psirho}$\;$ \begin{enumerate}[\upshape(i)]\item For each $n$, $\mathcal{T}_n$ is a transparent\footnote{See Definition \ref{D:transparent}.} spread and,\\ if $n>0$, then $\mathcal{I}(\mathcal{T}_n)= \{\alpha \in \mathcal{T}_n\mid\exists p[\alpha(p)+1=n]\}$ and $\mathcal{L}(\mathcal{T}_n)=\mathcal{T}_{n-1}$. \item For all $n$, for all $m>0$, $\mathcal{T}_n=\{\alpha \in \mathcal{T}_{n+m}\mid(\mathcal{T}_{n+m},=)\models \bigwedge_{i<m} \neg D_i[\alpha]\}$.  \item  For all $m$, $\{\underline 0\}=\mathcal{T}_1=\{\alpha \in \mathcal{T}_{m+1}\mid(\mathcal{T}_{m+1},=)\models   \bigwedge_{i<m} \neg D_i[\alpha]\}$.\item For all $n>0$, for all $m>0$, $(\mathcal{T}_n, =)\models \psi_m$ if and only if 
$m+1\le n$. \item For all $n>0$, for all $m>0$, $(\mathcal{T}_n, =)\models \rho_m$ if and only if 
$m+1=n$.\end{enumerate}\end{theorem}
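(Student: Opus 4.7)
My plan is to prove (i) directly and then extract (ii)--(v) as bookkeeping consequences of (i) together with the catalogue of preliminary observations already listed above the theorem.

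For (i), the spread-law property of $\tau_n$ has been noted, so only the isolated-point characterization, the identity $\mathcal{L}(\mathcal{T}_n)=\mathcal{T}_{n-1}$, and transparency remain to be checked. The easy direction of the characterization: if $\alpha(p)=n-1$, then any $\gamma\in\mathcal{T}_n$ extending $\overline\alpha(p+1)$ must, by monotonicity and the bound $<n$, satisfy $\gamma(q)=n-1$ for all $q\ge p$, forcing $\gamma=\alpha$. For the converse, given an isolation witness $n_0\ge 1$, I would test $\alpha$ against the $\gamma\in\mathcal{T}_n$ that agrees with $\alpha$ on the first $n_0$ positions and is constant at $\alpha(n_0-1)$ afterwards; isolation forces this $\gamma$ to equal $\alpha$, so $\alpha$ stabilizes at $k=\alpha(n_0-1)$, and if $k<n-1$ the alternative extension of $\overline\alpha n_0$ jumping to $k+1$ at position $n_0$ would yield a second, distinct member of $\mathcal{T}_n$, contradicting isolation; hence $k=n-1$. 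For $\mathcal{L}(\mathcal{T}_n)=\mathcal{T}_{n-1}$, the inclusion $\mathcal{T}_{n-1}\subseteq\mathcal{L}(\mathcal{T}_n)$ is witnessed, for any $\alpha\in\mathcal{T}_{n-1}$ and any $p$, by the nondecreasing element of $\mathcal{T}_n$ agreeing with $\alpha$ on the first $p$ positions and equal to $n-1$ thereafter; the reverse uses that, were $\alpha(i)=n-1$ for some $i$, every $\delta\in\mathcal{T}_n$ extending $\overline\alpha(i+1)$ would coincide with $\alpha$, blocking the limit-point property. Transparency I would witness by $\gamma:=\tau_{n-1}$: monotonicity forces $\tau_{n-1}(\overline\alpha k)\neq 0$ for $\alpha\in\mathcal{T}_n$ to entail $\alpha(i)=n-1$ for some $i<k$, placing $\alpha\in\mathcal{I}(\mathcal{T}_n)$.

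I would prove (ii) by induction on $m$. The base $m=1$ reads: $\neg D_0[\alpha]$ in $\mathcal{T}_{n+1}$ means non-decidability, equivalently (by Lemma \ref{L:defisol}) non-isolation, equivalently (by (i)) that no $\alpha(p)$ equals $n$, i.e.\ $\alpha\in\mathcal{T}_n$. In the step, the set $S_m\subseteq\mathcal{T}_{n+m+1}$ where $\bigwedge_{i<m}\neg D_i$ holds is $\mathcal{T}_{n+1}$ by the induction hypothesis (applied with $n+1$ in place of $n$), and $D_m[\alpha]$ then reduces, via Lemma \ref{L:defisol} applied inside the spread $\mathcal{T}_{n+1}$, to isolation in $\mathcal{T}_{n+1}$, which by (i) amounts to $\exists p[\alpha(p)=n]$; negating delivers $\alpha\in\mathcal{T}_n$. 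Part (iii) is the case $n=1$ of (ii) together with $\mathcal{T}_1=\{\underline 0\}$. For (iv), $\psi_m$ in $\mathcal{T}_n$ asserts an isolated point of the subspread $\mathcal{T}_{n-m}$, which by (i) exists iff $n-m\ge 1$, i.e.\ $m+1\le n$ (in the complementary range $n\le m$ a parallel induction shows $\bigwedge_{i<m}\neg D_i$ has empty extension in $\mathcal{T}_n$, so no candidate $\alpha$ exists). For (v), $\rho_m$ demands uniqueness of such an isolated point, which occurs iff $\mathcal{T}_{n-m}=\mathcal{T}_1=\{\underline 0\}$, i.e.\ $n=m+1$; for $n-m\ge 2$ the spread $\mathcal{T}_{n-m}$ admits several isolated points (as soon as $\alpha(p)=n-m-1$ is achievable in more than one way), ruling out uniqueness.

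The principal obstacle is the backward direction of the isolated-point characterization in (i); once the two distinguishing nondecreasing extensions have been set up correctly, the inductive cascade feeding (ii)--(v) runs smoothly off the preliminary observations.
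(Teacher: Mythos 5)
Your proposal is correct and takes essentially the same route as the paper, whose proof simply says ``use the preliminary observations preceding this Theorem'': what you have written amounts to supplying the missing proofs of those observations, namely the combinatorial characterization of the isolated points of $\mathcal{T}_n$ (via Lemma \ref{L:defisol}), the identification $\mathcal{L}(\mathcal{T}_n)=\mathcal{T}_{n-1}$ with $\tau_{n-1}$ witnessing transparency, and the induction showing that the $\bigwedge_{i<m}\neg D_i$-definable subset of $\mathcal{T}_{n+m}$ is exactly $\mathcal{T}_n$, from which (iii)--(v) follow by counting isolated points of $\mathcal{T}_{n-m}$.
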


\begin{proof} Use the preliminary observations preceding this Theorem.
\end{proof}

\begin{corollary} For all $n, m$, if $n\neq m$, then there exists a sentence $\psi$ such that $(\mathcal{T}_m,=)\models \psi$ and $(\mathcal{T}_n, =)\models\neg \psi$. \end{corollary}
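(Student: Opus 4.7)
The plan is to feed Theorem \ref{T:psirho}(iv) directly into a short case analysis on the ordered pair $(n,m)$; the biconditional in (iv) is already doing essentially all the separating work.

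Given $n \neq m$, decidability of $<$ on $\omega$ splits us into $n<m$ or $m<n$. I would treat $n<m$ first. If additionally $n > 0$, take $\psi := \psi_n$ (in the sense of Definition~\ref{D:Tinfplus}'s successor, i.e. $\psi_n = \exists \mathsf{x}[D_n(\mathsf{x})]$). Theorem \ref{T:psirho}(iv) supplies both implications of an intuitionistic biconditional: from $n+1 \leq m$ we get $(\mathcal{T}_m,=) \models \psi_n$, while the forward direction applied to $n$ gives $(\mathcal{T}_n,=) \models \psi_n \rightarrow n+1 \leq n$, which is absurd, so $(\mathcal{T}_n,=) \models \neg \psi_n$.

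The edge case $n = 0 < m$ has to be done by hand because $\psi_0$ was not formally introduced: since $\mathcal{T}_0 = \emptyset$ and $\underline{0} \in \mathcal{T}_m$, the sentence $\psi := \exists \mathsf{x}[\mathsf{x} = \mathsf{x}]$ is witnessed in $\mathcal{T}_m$ and is vacuously refuted in $\mathcal{T}_0$.

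For the remaining case $m < n$, rather than redo the argument I would apply the previous step to the pair $(m,n)$ to obtain a sentence $\psi'$ with $(\mathcal{T}_n,=) \models \psi'$ and $(\mathcal{T}_m,=) \models \neg \psi'$, and then put $\psi := \neg \psi'$: now $\psi = \neg \psi'$ holds in $\mathcal{T}_m$ by construction, while in $\mathcal{T}_n$ the sentence $\psi'$ yields $\neg \neg \psi' = \neg \psi$ intuitionistically. There is no genuine obstacle here beyond this small amount of bookkeeping; the only minor care needed is the $n = 0$ edge case and the observation (footnote to \textit{positively different} in Section~2) that flipping the roles of $\mathcal{T}_m$ and $\mathcal{T}_n$ and negating $\psi'$ is harmless.
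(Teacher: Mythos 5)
Your proof is correct and follows the route the paper intends: the Corollary is stated without proof precisely because it is meant to be read off from Theorem~\ref{T:psirho}(iv) (or (v)), which is exactly what you do, with the right care for the $n=0$ edge case and for reversing the roles of $\mathcal{T}_m$ and $\mathcal{T}_n$ via $\psi:=\neg\psi'$ (the same observation the paper makes in its footnote on \emph{positively different} being symmetric). Nothing is missing.
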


\section{Finite and infinite sums of toy spreads}
\subsection{A main result}\begin{definition} Assume $Spr(\beta), Spr(\gamma)$.
 
  We define: $\mathcal{F}_\beta\uplus\mathcal{F}_\gamma:=\{\langle 0\rangle\ast\delta\mid\delta\in\mathcal{F}_\beta\}\cup\{\langle 1\rangle\ast\delta\mid\delta\in\mathcal{F}_\gamma\}$. 
  
  For each $m$, we define: $m\otimes\mathcal{F}_\beta:=\{\langle i\rangle\ast\delta\mid i<m, \delta\in\mathcal{F}_\beta\}$.
  
  We also define: $\omega\otimes\mathcal{F}_\beta:=\{\langle i\rangle\ast\delta\mid i\in\omega, \delta\in\mathcal{F}_\beta\}$.

  \smallskip
 Note that $\mathcal{F}_\beta\uplus\mathcal{F}_\gamma$, $m\otimes\mathcal{F}_\beta$ and $\omega\otimes\mathcal{F}_\beta$ are spreads again.
 
 \smallskip We also define, for all $m,n>0$, sentences $\psi_m^n$ and $\rho_m^n$, as follows:
 
 $\psi^n_{m}:=\exists\mathsf{x}_0\exists\mathsf{x_1}\ldots \exists\mathsf{x}_{n-1}[\bigwedge_{i<j<n}[AP(\mathsf{x}_i, \mathsf{x}_j) \;\wedge\;\bigwedge_{i<n}\bigwedge_{j<m}\neg D(\mathsf{x}_j)]$.

 and $\rho^n_m:=\exists\mathsf{x}_0\exists\mathsf{x_1}\ldots \exists\mathsf{x}_{n-1}[\bigwedge_{i<j<n}[AP(\mathsf{x}_i, \mathsf{x}_j) \;\wedge\;\bigwedge_{i<n}\bigwedge_{j<m}\neg D_j(\mathsf{x}_i)\;\wedge\\\forall \mathsf{z}[\bigwedge_{j<m} \neg D_j(\mathsf{z})\rightarrow \bigvee_{i<n} \mathsf{z}=\mathsf{x}_i] ]$.
 
 \end{definition} 
 
The sentence $\psi^n_m$ expresses: `\textit{there exist (at least) $n$ limit points of order $m$ that are mutually apart}'.  

The sentence $\rho^n_m$ expresses: `\textit{there exist exactly $n$ limit points of order $m$ that are mutually apart}'. 
 \begin{theorem}\label{T:tauplustau}\begin{enumerate}[\upshape (i)]\item For all $m,n,p,q>0$,\\
 $(n\otimes \mathcal{T}_m,=)\models \psi^q_p$ if and only if either: $p+1< m$ or: $p+1=m$ and  $q\le n $.  \item For all $m,n,p,q>0$,
 $(n\otimes \mathcal{T}_m,=)\models \rho_p^q$ if and only if $p+1=m$ and $n=q$.
 \item For all $m,p,q>0$, $(\omega\otimes \mathcal{T}_m,=)\models \psi^q_p$ if and only if  $p< m$.\end{enumerate} \end{theorem}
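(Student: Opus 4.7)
The plan is to start by extending the preliminary observations preceding Theorem \ref{T:psirho} from $\mathcal{T}_n$ to the composite spreads $n\otimes\mathcal{T}_m$ and $\omega\otimes\mathcal{T}_m$. As with $\mathcal{T}_m$, these spreads are transparent, their isolated points are the $\langle i\rangle\ast\beta$ with $\beta\in\mathcal{I}(\mathcal{T}_m)$, and their coherence is $n\otimes\mathcal{T}_{m-1}$ (respectively $\omega\otimes\mathcal{T}_{m-1}$) whenever $m\ge 1$. By iteration, for every $\alpha\in n\otimes\mathcal{T}_m$ and every $p$: if $p\le m$, then $(n\otimes\mathcal{T}_m,=)\models\bigwedge_{j<p}\neg D_j[\alpha]$ iff $\alpha=\langle i\rangle\ast\beta$ with $i<n$ and $\beta\in\mathcal{T}_{m-p}$, and if $p\ge m$, no $\alpha$ satisfies the conjunction (since $\mathcal{T}_0=\emptyset$). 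The analogue with $i\in\omega$ holds for $\omega\otimes\mathcal{T}_m$.

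For (i), direction $(\Leftarrow)$: if $p+1<m$, then $\mathcal{T}_2\subseteq\mathcal{T}_{m-p}$ contains the mutually apart sequence $\underline 0, 0^\ast, 1^\ast,\ldots$, so $\langle 0\rangle\ast\underline 0, \langle 0\rangle\ast 0^\ast, \ldots, \langle 0\rangle\ast(q-2)^\ast$ witness $\psi^q_p$; if $p+1=m$ and $q\le n$, then $\underline 0\in\mathcal{T}_1=\mathcal{T}_{m-p}$ and the witnesses $\langle 0\rangle\ast\underline 0,\ldots,\langle q-1\rangle\ast\underline 0$ work. For direction $(\Rightarrow)$: if $p\ge m$, the conjunction $\bigwedge_{j<p}\neg D_j$ has no witness at all, contradicting $\psi^q_p$; if $p+1=m$ and $q>n$, then any $q$ mutually apart witnesses are necessarily of the form $\langle i_k\rangle\ast\underline 0$ (as $\mathcal{T}_{m-p}=\mathcal{T}_1=\{\underline 0\}$), and mutual apartness forces the map $k\mapsto i_k$ to inject $\{0,\ldots,q-1\}$ into $\{0,\ldots,n-1\}$, contradicting $q>n$. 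Part (iii) runs the same way: for $p<m$ we have $\underline 0\in\mathcal{T}_{m-p}$, so $\langle 0\rangle\ast\underline 0,\ldots,\langle q-1\rangle\ast\underline 0$ witness $\psi^q_p$; for $p\ge m$ no witness exists.

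For (ii), the easy direction takes $\alpha_k:=\langle k\rangle\ast\underline 0$ for $k<q=n$: these are $n$ mutually apart limit points of order $p=m-1$, and any limit point of order $p$ in $n\otimes\mathcal{T}_m$ is $\langle i\rangle\ast\underline 0$ with $i<n$, hence one of the $\alpha_i$. To rule out the other cases, the scenarios $p\ge m$ and $p+1=m$ with $q\ne n$ follow from the arguments in (i); the delicate scenario is $p+1<m$. Here, given witnesses $\alpha_0,\ldots,\alpha_{q-1}$ of $\rho^q_p$, define $\gamma_0:=\langle 0\rangle\ast\underline 0$ and $\gamma_{j+1}:=\langle 0\rangle\ast j^\ast$ for $0\le j<q$; these $q+1$ elements lie in $\langle 0\rangle\ast\mathcal{T}_2\subseteq\langle 0\rangle\ast\mathcal{T}_{m-p}$, are mutually apart, and each satisfies $\bigwedge_{j<p}\neg D_j$. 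By $\rho^q_p$, for each $j\le q$ we extract some $k_j<q$ with $\gamma_j=\alpha_{k_j}$; finite pigeonhole yields $j_1\ne j_2$ with $k_{j_1}=k_{j_2}$, whence $\gamma_{j_1}=\gamma_{j_2}$, contradicting $\gamma_{j_1}\;\#\;\gamma_{j_2}$.

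The main obstacle is the preliminary iteration: one must verify that transparency and the identification of the coherence pass cleanly through the disjoint-sum construction so that $\bigwedge_{j<p}\neg D_j$ cuts out precisely $n\otimes\mathcal{T}_{m-p}$ (or $\emptyset$ when $p\ge m$). Once that is in hand, parts (i) and (iii) reduce to exhibiting explicit witnesses or noting an impossible injection, and part (ii) reduces to the finite-pigeonhole argument outlined above.
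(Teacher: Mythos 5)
Your proposal is correct and follows essentially the same route as the paper: the paper's proof consists precisely of the three counting observations you derive (infinitely many mutually apart limit points of order $p$ when $p+1<m$, exactly the $n$ points $\langle i\rangle\ast\underline 0$ when $p+1=m$, none when $p\ge m$) and then declares that (i)--(iii) ``follow easily.'' Your write-up supplies the details the paper omits --- the transparency/coherence iteration through the disjoint sum, the explicit witnesses, and the pigeonhole argument against a finite transversal --- all of which are sound (including intuitionistically).
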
 
 
 \begin{proof} (i) Note the following:
 
  If $p+1<m$ and $n>0$, then $\mathcal{T}_m$ and also $n\otimes\mathcal{T}_m$ contain infinitely many limit points of order $p$ that are mutually apart. 
 
 If $p+1=m$ and $n>0$, then $n \otimes \mathcal{T}_m$ contains exactly $n$ limit points of order $p$ that are mutually apart: the points $\langle i\rangle\ast \underline 0$, where $i<n$, so  $(n\otimes \mathcal{T}_m,=)\models \psi^q_p$ if and only if $q\le n$.
 
 If $p< m$, then $\omega \times \mathcal{T}_m$ contains infinitely many limit points of order $p$ that are mutually apart.

 \smallskip The proofs of (i), (ii) and (iii) follow easily from these observations.  
  \end{proof}
\begin{definition} $\;$

For each $k$, for each $s$ in $\omega^k$, we define: $\mathcal{T}_s=\bigcup_{i<k}\{\langle i\rangle \ast\delta\mid \delta \in \mathcal{T}_{s(i)}\}$. 

 For each $\alpha$, we define: $\mathcal{T_\alpha}:=\bigcup_i \{\langle i \rangle\ast \delta\mid\delta \in \mathcal{T}_{\alpha(i)}\}$. \end{definition}
 
 \begin{definition} Let $\mathcal{F}_0, \mathcal{F}_1\subseteq\mathcal{N}$ and assume $\varphi:\mathcal{F}_0\rightarrow\mathcal{F}_1$. 
 
  $\varphi$ is a \emph{(surjective)} map from $\mathcal{F}_0$ \emph{onto} $\mathcal{F}_1$ if and only if $\forall \beta \in \mathcal{F}_1\exists \alpha\in \mathcal{F}_0[\varphi|\alpha =\beta]$. 
 
 $\mathcal{F}_0$ \emph{is equivalent to} $\mathcal{F}_1$, notation: $\mathcal{F}_0\sim\mathcal{F}_1$, if and only if there exists $\varphi:\mathcal{F}_0\rightarrow\mathcal{F}_1$ that is both injective\footnote{See Definition \ref{D:functionspreads}.} and surjective.  \end{definition}

\begin{theorem}\label{T:finitesumstoys} \begin{enumerate}[\upshape (i)] \item For each $m$, $\mathcal{T}_m\oplus\mathcal{T}_{m+1}\sim \mathcal{T}_{m+1}$. \item For all $m,n$, if $m<n$, then $\mathcal{T}_m\oplus\mathcal{T}_n\sim\mathcal{T}_{n}$. 
\item For all $k$, for all $s$ in $\omega^k$, there exist $m,n$ such that $\mathcal{T}_s\sim n\otimes \mathcal{T}_m$.  \end{enumerate}\end{theorem}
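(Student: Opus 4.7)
For (i), I would construct an explicit bijection $\varphi:\mathcal{T}_m\uplus\mathcal{T}_{m+1}\to\mathcal{T}_{m+1}$ by splitting $\mathcal{T}_{m+1}$ according to the decidable dichotomy $\alpha(0)=0$ versus $\alpha(0)\ge 1$. I would put $\varphi(\langle 0\rangle\ast\delta):=\delta_+$ for $\delta\in\mathcal{T}_m$, where $\delta_+(i):=\delta(i)+1$, whose image is $\{\alpha\in\mathcal{T}_{m+1}\mid\alpha(0)\ge 1\}$; and $\varphi(\langle 1\rangle\ast\delta):=\langle 0\rangle\ast\delta$ for $\delta\in\mathcal{T}_{m+1}$, whose image is $\{\alpha\in\mathcal{T}_{m+1}\mid\alpha(0)=0\}$. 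The two images are disjoint and together exhaust $\mathcal{T}_{m+1}$ because for any $\alpha\in\mathcal{T}_{m+1}$ the value $\alpha(0)$ is a natural number, so $\alpha(0)=0\vee\alpha(0)\ge 1$ is decidable. The explicit inverse sends $\alpha$ with $\alpha(0)\ge 1$ to $\langle 0\rangle\ast\alpha_-$ (where $\alpha_-(i):=\alpha(i)-1\in\mathcal{T}_m$, since $\alpha$ is non-decreasing with values $<m+1$), and $\alpha$ with $\alpha(0)=0$ to $\langle 1\rangle\ast\tau\alpha$ (where $\tau\alpha(i):=\alpha(i+1)$). Both maps read only a single coordinate of the input before proceeding, so continuity is immediate.

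For (ii), I would induct on $n-m$. The base case $n=m+1$ is exactly (i). For the step $n>m+1$, I would invoke the induction hypothesis at $(m,n-1)$ to get $\mathcal{T}_m\uplus\mathcal{T}_{n-1}\sim\mathcal{T}_{n-1}$, and (i) at level $n-1$ to get $\mathcal{T}_{n-1}\uplus\mathcal{T}_n\sim\mathcal{T}_n$. Chaining, together with the evident associativity and commutativity of $\uplus$ up to $\sim$,
\[ \mathcal{T}_m\uplus\mathcal{T}_n\sim\mathcal{T}_m\uplus(\mathcal{T}_{n-1}\uplus\mathcal{T}_n)\sim(\mathcal{T}_m\uplus\mathcal{T}_{n-1})\uplus\mathcal{T}_n\sim\mathcal{T}_{n-1}\uplus\mathcal{T}_n\sim\mathcal{T}_n. \]

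For (iii), given $s\in\omega^k$ I would set $M:=\max\{s(i)\mid i<k\}$ if $k>0$, and $M:=0$ otherwise, and $N:=\#\{i<k\mid s(i)=M\}$. I claim $\mathcal{T}_s\sim N\otimes\mathcal{T}_M$. If $M=0$, then $\mathcal{T}_s=\emptyset$ (either $k=0$, or every $s(i)=0$ and each $\mathcal{T}_{s(i)}=\mathcal{T}_0=\emptyset$), and $N\otimes\mathcal{T}_0=\emptyset$ as well. If $M>0$, then $N\ge 1$, and for each $i$ with $s(i)<M$, either $s(i)=0$ and the $i$-th summand is empty and contributes nothing, or $0<s(i)<M$ and (ii) gives $\mathcal{T}_{s(i)}\uplus\mathcal{T}_M\sim\mathcal{T}_M$, which permits absorbing the $i$-th summand into any one of the $N$ existing copies of $\mathcal{T}_M$. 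Iterating over all $i$ with $s(i)<M$ leaves exactly $N$ copies of $\mathcal{T}_M$, as required.

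The main obstacle is not any single deep insight but the bookkeeping: one must verify that $\uplus$ and $\otimes$ are associative and commutative up to $\sim$, that $\mathcal{F}\uplus(n\otimes\mathcal{F})\sim(n+1)\otimes\mathcal{F}$, and, most importantly, that each map above really is a function in the sense of Definition \ref{D:functionspreads} (given by a continuous code, and injective/surjective with respect to apartness). Because every map here is determined by reading a single coordinate of the input, continuity is automatic and the injectivity and surjectivity are each verified by displaying the inverse.
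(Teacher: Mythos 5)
Your proposal is correct and takes essentially the same route as the paper: an explicit piecewise bijection splitting $\mathcal{T}_{m+1}$ by its first value for (i), the identical associativity chain for (ii), and absorption of the smaller summands via (ii) for (iii) (you give the answer $N\otimes\mathcal{T}_M$ in closed form via the maximum and its multiplicity, where the paper inducts on $k$ with a case split on the last entry, but the content and the supporting bookkeeping lemmas are the same). A small point in your favour: your clause $\langle 0\rangle\ast\delta\mapsto S\circ\delta$ in (i) is the correct version of the paper's $\langle 0\rangle\ast\delta\mapsto\langle 1\rangle\ast S\circ\delta$, which as literally written only reaches those $\alpha$ with $\alpha(0)\le 1$ and hence is not surjective onto $\mathcal{T}_{m+1}$ once $m\ge 2$.
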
 

\begin{proof} (i) Let $m$ be given. Define $\varphi:\mathcal{T}_m\oplus \mathcal{T}_{m+1}\rightarrow \mathcal{T}_{m+1}$ such that, for all $\delta$ in $\mathcal{T}_m$, $\varphi|\langle 0\rangle \ast\delta = \langle 1\rangle\ast S\circ\delta$, and, for each $\delta$ in $\mathcal{T}_{m+1}$, $\varphi|\langle 1\rangle \ast \delta = \langle 0 \rangle \ast \delta$. Clearly, $\varphi$ is a one-to-one function mapping $\mathcal{T}_m\oplus \mathcal{T}_{m+1}$ onto $\mathcal{T}_{m+1}$.

\smallskip (ii) Let $m$ be given. We use induction on $n$. The case $n=m+1$ has been treated in (i).  Now let $n$ be given such that $m<n$ and $\mathcal{T}_m\oplus\mathcal{T}_n\sim\mathcal{T}_n$. Then $\mathcal{T}_m\oplus \mathcal{T}_{n+1}\sim \mathcal{T}_m\oplus (\mathcal{T}_n\oplus \mathcal{T}_{n+1})\sim(\mathcal{T}_m\oplus \mathcal{T}_n)\oplus \mathcal{T}_{n+1} \sim \mathcal{T}_n\oplus \mathcal{T}_{n+1}\sim \mathcal{T}_{n+1}$. 

\smallskip

(iii) We use induction on $k$. If $s\in \omega^0$, then $s=\langle\;\rangle$ and $\emptyset=\mathcal{T}_s = 0\otimes \mathcal{T}_1$. 

 Now let $k$ be given such that for all $s$ in $\omega^k$ there exist $m,n$ such that $\mathcal{T}_s= n \otimes \mathcal{T}_m$. 
 
 Let $s=t\ast\langle p\rangle$ in $\omega^{k+1}$ be given. Find $m,n$ such that $\mathcal{T}_t=n\otimes \mathcal{T}_m$. Note: $\mathcal{T}_s\sim\mathcal{T}_t\oplus \mathcal{T}_{p}$ and consider several cases. 

\textit{Case (1)}: $t=\langle\;\rangle$. Then $\mathcal{T}_s=1\otimes \mathcal{T}_{p}$.

\textit{Case (2)}: $t \neq \langle\;\rangle$ and  
$p<m$. Then, by (ii): $\mathcal{T}_s \sim \mathcal{T}_t\sim n\otimes \mathcal{T}_m$.

\textit{Case (3)}:  $t \neq \langle\;\rangle$ and  
$p=m$. Then: $\mathcal{T}_s \sim \mathcal{T}_t\oplus \mathcal{T}_m\sim (n+1)\otimes \mathcal{T}_m$.

\textit{Case (4)}:  $t \neq \langle\;\rangle$ and  
$p>m$. Then,  by (ii): $$\mathcal{T}_s \sim\mathcal{T}_t\oplus \mathcal{T}_{p
}\sim\underbrace{\mathcal{T}_m\oplus\ldots\oplus\mathcal{T}_m}_{n}\oplus \mathcal{T}_{p
}\sim\mathcal{T}_p\sim 1\otimes \mathcal{T}_{p}.$$
\end{proof}

\begin{theorem}[$EQ$ has continuum many complete extensions\footnote{Note that there exists an embedding $\rho:\mathcal{C}\hookrightarrow \{\zeta\in[\omega]^\omega\mid\zeta(0)=2\}$.}]\label{T:uncountablymany}$\;$\begin{enumerate}[\upshape (i)] \item For each $\alpha$, $\mathcal{I}(\mathcal{T}_\alpha)=\bigcup_i\{\langle i\rangle \ast\delta\mid \delta \in \mathcal{T}_{\alpha(i)}\;\wedge\;\exists p[\delta(p)+1=\alpha(i)]\}$. \item For all $\alpha$, for all $n$, $(\mathcal{T}_\alpha,=)\models \psi_n$ if and only if $\exists i[\alpha(i)>n]$. \item For all $\alpha$, for all $n$, $(\mathcal{T}_\alpha,=)\models \rho_n$ if and only if \\$\exists i[\alpha(i)=n+1\;\wedge\;\forall j[\alpha(j)=n+1\rightarrow i=j]]$.\item For all $\zeta, \eta$ in $[\omega]^\omega$, if $\zeta\perp\eta$ and $\zeta(0)=\eta(0)=2$,  \\then  there exists a sentence $\psi$ such that $(\mathcal{T}_\zeta,=)\models \psi$ and $(\mathcal{T}_\eta,=)\models \neg\psi$. \end{enumerate} \end{theorem}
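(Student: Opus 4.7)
The plan is to exploit that $\mathcal{T}_\alpha$ decomposes as a disjoint union of branches $\{\langle i\rangle\ast\delta\mid\delta\in\mathcal{T}_{\alpha(i)}\}$, each a copy of a toy spread, with the branch index $i$ decidable from the first coordinate. This makes apartness across branches automatic and reduces first-order questions about $\mathcal{T}_\alpha$ to branch-wise questions about the toy spreads, to which Theorem \ref{T:psirho} and the observations preceding it apply directly. Specifically, by induction on $n$ one shows that $\langle i\rangle\ast\delta\in\mathcal{T}_\alpha$ satisfies $D_n$ in $\mathcal{T}_\alpha$ if and only if $\delta$ satisfies $D_n$ in $\mathcal{T}_{\alpha(i)}$: the decidability clause in $D_n$ against witnesses in other branches is automatic, so only the intra-branch condition matters.

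Part (i) then follows by applying Theorem \ref{T:taun}(i) to each branch. Part (ii) follows because $\psi_n$ asserts the existence of some $D_n$-point and, by Theorem \ref{T:psirho}(iv), a single toy spread $\mathcal{T}_m$ contains a $D_n$-point precisely when $m>n$. Part (iii) is obtained similarly from the branch-wise correspondence together with Theorem \ref{T:psirho}(v), by tracking when the total collection of $D_n$-points across all branches forms a singleton.

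For (iv), given $\zeta,\eta\in[\omega]^\omega$ with $\zeta(0)=\eta(0)=2$ and $\zeta\;\#\;\eta$, determine the least $k$ with $\zeta(k)\neq\eta(k)$ and, without loss of generality, assume $\zeta(k)<\eta(k)$. Strict monotonicity of $\eta$ yields $\eta(j)<\zeta(k)$ for $j<k$ and $\eta(j)\geq\eta(k)>\zeta(k)$ for $j\geq k$, so the natural number $\zeta(k)$ occurs as a value of $\zeta$ but provably not as a value of $\eta$. Setting $n:=\zeta(k)-1$, one invokes (iii) to obtain $(\mathcal{T}_\zeta,=)\models\rho_n$ together with $(\mathcal{T}_\eta,=)\models\neg\rho_n$. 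The main obstacle will be the constructive discharge of the negative side: one must \emph{refute}, rather than merely fail to prove, the existence of a unique $D_n$-point in $\mathcal{T}_\eta$. This is feasible because from any putative witness in a branch $i_0$ the unboundedness and strict monotonicity of $\eta$ provide further indices $j$ with $\eta(j)>n$, yielding a second $D_n$-point apart from the first and thus contradicting uniqueness.
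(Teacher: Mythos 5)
Your treatment of (i) and (ii) is sound and is essentially the paper's own route: the branch-wise correspondence ($\langle i\rangle\ast\delta$ satisfies $D_n$ in $\mathcal{T}_\alpha$ if and only if $\delta$ satisfies $D_n$ in $\mathcal{T}_{\alpha(i)}$, the cross-branch instances of the decidability clause being discharged by apartness) is the right key lemma. The gap is in (iii) and, fatally, in (iv). Carry your own recipe for (iii) --- ``track when the total collection of $D_n$-points across all branches forms a singleton'' --- to the end: by the correspondence and the observations preceding Theorem \ref{T:psirho}, the branch $\{\langle j\rangle\ast\delta\mid\delta\in\mathcal{T}_{\alpha(j)}\}$ contributes as its $D_n$-points exactly the points $\langle j\rangle\ast\delta$ with $\delta$ an isolated point of $\mathcal{T}_{\alpha(j)-n}$. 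This contribution is a singleton when $\alpha(j)=n+1$, but it is \emph{infinite} whenever $\alpha(j)>n+1$. So the set of all $D_n$-points of $\mathcal{T}_\alpha$ is a singleton only if, besides the condition displayed in (iii), no $j$ satisfies $\alpha(j)>n+1$; your reduction, carried out honestly, does not deliver the stated equivalence.

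This breaks (iv) as you argue it. For $\zeta\in[\omega]^\omega$ the values are unbounded, so for every $n$ some branch has $\zeta(j)>n+1$, and $\rho_n$ is therefore \emph{refutable} in $\mathcal{T}_\zeta$; the positive half $(\mathcal{T}_\zeta,=)\models\rho_n$ of your separation cannot hold. Your own discharge of the negative side gives this away: the argument ``from any putative witness in a branch $i_0$, unboundedness and monotonicity provide $j$ with $\eta(j)>n$ and hence a second $D_n$-point apart from the first'' applies word for word to $\zeta$, so the proposal claims $(\mathcal{T}_\zeta,=)\models\rho_n$ and simultaneously supplies a proof of $(\mathcal{T}_\zeta,=)\models\neg\rho_n$. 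You have in fact reproduced the route the paper itself takes (its proof of (iv) likewise reads off a $\rho$-type sentence from (iii) at the least index where $\zeta$ and $\eta$ differ), so the obstacle is not of your making; but as it stands the argument does not close. What (iv) needs is a sentence detecting the presence, inside an unbounded disjoint union of toy spreads, of a branch of height exactly $\zeta(p)$, and a sentence asserting ``there is exactly one $D_n$-point'' cannot do that, since every higher branch floods the structure with further $D_n$-points. Note also that Theorem \ref{T:finitesumstoys} permits absorbing a lower branch into a strictly higher one, which is in direct tension with any separation of $\mathcal{T}_\zeta$ from $\mathcal{T}_\eta$ by a first-order sentence; any correct proof of (iv) must explain why that absorption does not produce an isomorphism of the two structures.
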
 

\begin{proof} (i) Use Theorem \ref{T:psirho}(i).

\smallskip (ii) Note that, for each $\alpha$, for each $n$, $(\mathcal{T}_\alpha,=)\models \psi_n$ if and only if $\mathcal{T}_\alpha$ contains a limit point of order $n$  if and only if  $\exists i[\alpha(i)>n]$. 

\smallskip (iii) Note that, for each $\alpha$, for each $n$, $(\mathcal{T}_\alpha,=)\models \rho_n$ if and only if $\mathcal{T}_\alpha$ contains exactly one limit point of order $n$  if and only if  \\$\exists i[\alpha(i)=n+1\;\wedge\;\forall j[\alpha(j)=n+1\rightarrow i=j]]$.

\smallskip (iv) Using (iii), note that, for all $\zeta$ in $[\omega]^\omega$, if $\zeta(0)>1$, then $\forall n[(\mathcal{T}_\zeta,=) \models \rho_n$ if and only if $\exists p[\zeta(p)= n+1]$.

 Conclude that, for all $\zeta,\eta$ in $[\omega]^\omega$, for all $p$, if $\zeta(0)=\eta(0)=2$ and $\zeta \perp\eta$ and \\$p:=\mu i[\zeta(i)\neq \eta(i)]$ and $\zeta(p)<\eta(p)$, then $\neg \exists i[\eta(i)=\zeta(p)]$, and, therefore, \\$(\mathcal{T}_\zeta,=)\models \psi_{\zeta(p)-1}$ and $(\mathcal{T}_\eta,=)\models \neg \psi_{\zeta(p)-1}$.  \end{proof}
\subsection{Finitary spreads suffice}
\begin{definition} Assume $Spr(\beta)$. $\beta$ is called a \emph{finitary spread-law} or a \emph{fan-law} if and only if $\exists \gamma\forall s[\beta(s)=0\rightarrow \forall n[\beta(s\ast\langle n \rangle)=0\rightarrow n\le \gamma(s)]]$. 

$\mathcal{X}\subseteq \mathcal{N}$ is a \emph{fan} if and only if there exists a fan-law $\beta$ such that $\mathcal{X}=\mathcal{F}_\beta$. \end{definition}

Note that the toy spreads $\mathcal{T}_0, \mathcal{T}_1, \ldots$ are fans.

The set $\mathcal{T}_\alpha$, however, is a spread but, in general, not a fan.

\smallskip
Define, for each $\alpha$,  $\mathcal{T}^\ast_\alpha:= \overline{\bigcup_n \underline{\overline 0} n\ast\langle 1\rangle\ast \mathcal{T}_{\alpha(n)}}$.\footnote{For each $\mathcal{X}\subseteq \mathcal{N}$, $\overline{\mathcal{X}}:=\{\alpha\mid\forall n\exists \beta \in \mathcal{X}[\overline \alpha n\sqsubset \beta]\}$ is the \textit{closure} of $\mathcal{X}$. \\$\bigcup_n \underline{\overline 0} n\ast\langle 1\rangle\ast \mathcal{T}_{\alpha(n)}$, in general,  is not a spread, but its closure is.}

Note that, for each $\alpha$, $\mathcal{T}_\alpha^\ast$ is a fan.

One may prove a statement very similar to Theorem \ref{T:uncountablymany}(iv): \begin{quote} \textit{For all $\zeta, \eta$ in $[\omega]^\omega$, if $\zeta\perp\eta$ and $\zeta(0)=\eta(0)=2$, then there exists a sentence $\psi$ such that $(\mathcal{T}^\ast_\zeta, =)\models \psi$ and $(\mathcal{T}^\ast_\eta, =)\models\neg \psi$.} \end{quote}

\subsection{Comparison with an older theorem} The first-order theory $DLO$ of \textit{dense linear orderings without endpoints} is formulated in a first-order language with binary predicate symbols $=$ and $\sqsubset$ and consists of the following axioms: \begin{enumerate} \item $\mathsf{\forall x[x\sqsubset x]}$, \item $\mathsf{\forall x\forall y\forall z[(x\sqsubset y\;\wedge\;y\sqsubset z)\rightarrow x\sqsubset z]}$, \item $\mathsf{\forall x\forall y[
\bigl(\neg (x\sqsubset y)\;\wedge\;\neg (y\sqsubset x)\bigr)\rightarrow x=y]}$. \item $\mathsf{\forall x\forall y[x\sqsubset y\rightarrow\forall z[x\sqsubset z\;\vee\;z\sqsubset y]]}$, \item $\mathsf{\forall x\exists y[x\sqsubset y]\;\wedge\;\forall x\exists y[y\sqsubset x]}$, \item  $\mathsf{\forall x\forall y[x\sqsubset y\rightarrow\exists z[x\sqsubset z\;\wedge\;z\sqsubset y]]}$, and \item axioms of equality. \end{enumerate}

$(\mathcal{R}, =_\mathcal{R}, <_\mathcal{R})$ realizes $DLO$. 

Let $DLO^-$ be the theory one obtains from $DLO$ by leaving out axiom (4). If one defines a relation $<'_\mathcal{R}$ on $\mathcal{R}$ by:
$\forall x\forall y[x<'_\mathcal{R} y\leftrightarrow \neg\neg(x<_\mathcal{R}y)]$, then $(\mathcal{R}, =_\mathcal{R}, <'_\mathcal{R})$ realizes $DLO^-$ but not $DLO$.

In \cite[Theorem 2.4]{veldmanjanssen90} one constructs a function $\alpha\mapsto A_\alpha$ associating to each element $\alpha$ of $2^\omega=\mathcal{C}$ a subset $A_\alpha$ of the set  $\mathcal{R}$ of the real numbers such that, for each $\alpha $ in $\mathcal{C}$,  $A_\alpha$ is dense in $(\mathcal{R}, <_\mathcal{R})$, and,   for all $\alpha, \beta$ in $\mathcal{C}$, if $\alpha\perp \beta$, then there exists a sentence $\psi$ such that $(A_\alpha, <_\mathcal{R})\models \psi$ and $(A_\beta, <_\mathcal{R})\models \neg \psi$. 

Note: each structure $(A_\alpha, <_\mathcal{R})$ realizes $DLO$.
The (intuitionistic) theory $DLO$ thus has continuum many complete extensions.  
\footnote{Classically, $Th\bigl((\mathbb{Q}, <)\bigr)$ is the one and only complete extension of $DLO$.}

Theorem \ref{T:uncountablymany}(iii) strengthens this result. 

\smallskip
One may obtain the result of \ref{T:uncountablymany}(iii) with subsets of $\mathcal{R}$ as well as with subsets of $\mathcal{N}$. 
Define an infinite sequence $\mathcal{U}_0, \mathcal{U}_1, \ldots$ of subsets of $\mathcal{R}$ by: 

$\mathcal{U}_0:=\emptyset$ and $\mathcal{U}_1:=\{0_\mathcal{R}\}$, and for each $m>0$, $\mathcal{U}_{m+1}= \overline{\bigcup_n \frac{1}{2^{n+1}}+\frac{1}{2^{n+2}}\cdot_\mathcal{R} \mathcal{U}_m}.$\footnote{For each $\mathcal{X}\subseteq \mathcal{R}$, $\overline{\mathcal{X}}:=\{x\in \mathcal{R}\mid \forall n\exists y\in \mathcal{X}[|x-y|< \frac{1}{2^n}]\}$ is the \textit{closure} of $\mathcal{X}$.}  

For each $m$, one may define $\varphi:\mathcal{T}_m\rightarrow \mathcal{U}_m$ such that $\varphi$ is surjective and satisfies: $\forall \delta\in\mathcal{T}_m\forall \zeta \in \mathcal{T}_m[\delta\perp\zeta\leftrightarrow \varphi|\delta\;\#_\mathcal{R}\;\varphi|\zeta]$.

It follows that, for each $m$, the structures $(\mathcal{T}_m,=)$ and $(\mathcal{U}_m, =_\mathcal{R})$ are elementarily equivalent.

Define, for each $\alpha$ in $[\omega]^\omega$, $A_\alpha:=\bigcup_n n+_\mathcal{R}\mathcal{U}_{\alpha(n)}$.

Note: for all $\alpha,\beta$ in $[\omega]^\omega$, if $\alpha\perp \beta$, then there exists a sentence $\psi$ such that $(A_\alpha, =_\mathcal{R})\models \psi$ and $(A_\beta, =_\mathcal{R})\models \neg\psi$.

We thus obtain from Theorem \ref{T:uncountablymany} a result similar to \cite[Theorem 2.4]{veldmanjanssen90}, this time using not the ordering relation $<_\mathcal{R}$ but only the equality relation $=_\mathcal{R}$.

Note that the relation $=_\mathcal{R}$ is definable in the structure $(\mathcal{R},<_\mathcal{R})$ as \\$\forall x\in\mathcal{R}\forall y \in \mathcal{R}[x=_\mathcal{R}y\leftrightarrow  \bigl(\neg(x<_\mathcal{R}y)\;\wedge\;\neg(y<_\mathcal{R}x)\bigr)]$.  
\section{The Vitali equivalence relation}\label{S:vitali}

For all $\alpha, \beta$, we define $$\alpha\sim_V\beta\leftrightarrow \exists n\forall m>n[\alpha(m)=\beta(m)].$$

The relation $\sim_V$ will be called the \textit{Vitali equivalence relation}.

This is because  the relation $\sim_V$ on $\mathcal{N}$ resembles the relation $\sim_\mathbb{Q}$ on the set $\mathcal{R}$ of the real numbers defined by: $$x\sim_\mathbb{Q}y\leftrightarrow \exists q\in\mathbb{Q}[x-_\mathcal{R}y=q].$$

The relation $\sim_\mathbb{Q}$ has played an important r\^ole in classical set theory.

If one constructs, using the axiom of choice, within the interval $[0,1]$, a \textit{transversal} for this equivalence relation, that is: a complete set of mutually inequivalent representatives,  one obtains a set that is not Lebesgue measurable. This discovery is due to G.~Vitali.

Note: $(\mathcal{N}, \sim_V)\models EQ$. 

The following theorem brings to light an important difference between $(\mathcal{N},=)$ and $(\mathcal{N}, \sim_V)$. 

\begin{definition}\label{D:stable} A proposition $P$ is  \emph{stable} if and only if $\neg\neg P\rightarrow P$.

A binary relation $\sim$ on $\mathcal{N}$ is  \textit{stable} if and and only if \\$\forall\alpha\forall\beta[\neg\neg(\alpha\sim\beta)\rightarrow \alpha\sim\beta]$\footnote{The term `\textit{stable}'  has been introduced by D. Van Dantzig, who hoped to be able to reconstruct `classical', non-intuitionistic mathematics within the stable part of intuitionistic mathematics, see \cite{dantzig}.}.\end{definition} 
\begin{theorem}[Equality is stable but the Vitali equivalence relation is not stable]\label{T:vitaliunstable}

 \begin{enumerate}[\upshape (i)] $\;$ \indent \item $(\mathcal{N}, =)\models \mathsf{\forall x \forall y[\neg\neg(x=y)\rightarrow x=y]}$.
\item $(\mathcal{N}, \sim_V)\models \mathsf{\forall x\neg \forall y[\neg\neg(x=y)\rightarrow x=y]}$. \end{enumerate} \end{theorem}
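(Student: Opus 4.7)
For (i), the proof is routine: given $\alpha, \beta \in \mathcal{N}$ with $\neg\neg(\alpha = \beta)$, i.e.\ $\neg\neg\forall n[\alpha(n) = \beta(n)]$, intuitionistic logic yields $\forall n[\neg\neg(\alpha(n) = \beta(n))]$. Since equality of natural numbers is decidable, each $\alpha(n) = \beta(n)$ is stable, so $\forall n[\alpha(n)=\beta(n)]$, i.e.\ $\alpha=\beta$. Discharging the assumption and introducing the two universal quantifiers gives (i).

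For (ii), fix $\alpha$; I want to derive a contradiction from $\forall\beta[\neg\neg(\alpha\sim_V\beta)\rightarrow\alpha\sim_V\beta]$. The plan is to use Brouwer's Continuity Principle (Axiom \ref{ax:bcp}) to force a witness $n$ for $\alpha \sim_V \beta$ to depend only on a finite prefix of $\beta$, and then to diagonalise: the minimal $n$ with $\forall m > n[\alpha(m) = \beta(m)]$ in fact depends on the full tail of $\beta$, so continuity cannot hold.

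Concretely, $\alpha \sim_V \alpha$ with witness $n = 0$. For every $\beta$ obtained from $\alpha$ by a finite perturbation we have $\alpha \sim_V \beta$, so the stability hypothesis supplies a witness. BCP, applied in a neighbourhood of $\alpha$, should then deliver $M_0$ and a uniform bound $N$ such that every $\beta$ extending $\overline \alpha M_0$ (and satisfying $\neg\neg(\alpha \sim_V \beta)$) admits a witness $\leq N$. Setting $M := \max(M_0, N+1)$, I define $\beta_M$ to agree with $\alpha$ everywhere except at position $M$, where $\beta_M(M) := \alpha(M)+1$. Then $\overline \alpha M_0 \sqsubset \beta_M$, and $\alpha \sim_V \beta_M$ since they differ in only one place; but the minimal witness for $\beta_M$ is exactly $M > N$, contradicting the uniform bound.

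The main obstacle is making the BCP application rigorous: Axiom \ref{ax:bcp} requires a total relation on $\mathcal{N} \times \omega$, whereas the naive ``witness'' map is only defined on those $\beta$ with $\neg\neg(\alpha \sim_V \beta)$. My plan is to use the stability assumption itself to promote a suitable partial relation to a total one --- for instance, by declaring $\beta R n$ whenever $\forall m > n[\alpha(m) = \beta(m)]$ holds or some finite prefix of $\beta$ already exhibits a disagreement with $\alpha$ --- and then to invoke Axiom \ref{ax:bcp} (or Lemma \ref{L:bcpspreads} applied to an auxiliary spread through $\alpha$) to extract the required continuity. I expect this totalisation step to be the delicate part; once the total relation is arranged, the diagonalisation with $\beta_M$ closes the argument.
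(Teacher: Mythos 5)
Your part (i) is correct: you derive stability of $=$ from $\neg\neg\forall n P(n)\rightarrow\forall n\neg\neg P(n)$ plus decidability of equality on $\omega$, where the paper instead observes that $\alpha=\beta$ is itself a negation, namely $\neg(\alpha\;\#\;\beta)$, and uses $\neg\neg\neg P\leftrightarrow\neg P$. Both routes are fine.

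For part (ii) the final diagonalisation is right, but the step you yourself flag as delicate --- legitimising the continuity application --- is where the entire content of the theorem lies, and your proposed repair does not work. The relation you suggest ($\beta R n$ iff $\forall m>n[\alpha(m)=\beta(m)]$ or some finite prefix of $\beta$ disagrees with $\alpha$) is not total on $\mathcal{N}$: asserting $\forall\beta\exists n[\beta Rn]$ amounts to asserting $\forall\beta[\alpha\sim_V\beta\;\vee\;\alpha\;\#\;\beta]$, which is exactly the kind of decision one cannot make. Worse, even on a domain where it were total, the extra disjunct is satisfied by your diagonal element $\beta_M$, which visibly disagrees with $\alpha$ at $M$, so the uniform bound delivered by continuity is no longer contradicted by $\beta_M$. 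The missing idea is the paper's auxiliary spread $\mathcal{F}^\alpha:=\{\beta\mid\forall m\forall n[\bigl(\beta(m)\neq\alpha(m)\;\wedge\;\beta(n)\neq\alpha(n)\bigr)\rightarrow m=n]\}$ of sequences differing from $\alpha$ in at most one place. For $\beta$ in this spread one cannot prove $\alpha\sim_V\beta$ outright (that is in effect what the theorem denies), but one can prove $\neg\neg(\alpha\sim_V\beta)$: under either horn of $\exists n[\beta(n)\neq\alpha(n)]\;\vee\;\neg\exists n[\beta(n)\neq\alpha(n)]$ one gets $\alpha\sim_V\beta$, and the double negation of that disjunction always holds. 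It is precisely here that the stability hypothesis is consumed: it upgrades $\neg\neg(\alpha\sim_V\beta)$ to $\alpha\sim_V\beta$ for every $\beta$ in $\mathcal{F}^\alpha$, which makes the genuine witness relation $\{(\beta,n)\mid\forall m>n[\alpha(m)=\beta(m)]\}$ total on the spread; Lemma \ref{L:bcpspreads} applied at the point $\alpha\in\mathcal{F}^\alpha$ then yields your $M_0$ and $N$, and your $\beta_M$, which lies in $\mathcal{F}^\alpha$, closes the argument. Note also that your sentence ``for every $\beta$ obtained from $\alpha$ by a finite perturbation we have $\alpha\sim_V\beta$, so the stability hypothesis supplies a witness'' has the logic backwards: where $\alpha\sim_V\beta$ is already known, stability is not needed; it is needed for the members of the spread, about which one only knows the double negation.
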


\begin{proof} (i) Note: for all $\alpha, \beta$, $\alpha=\beta\leftrightarrow \neg (\alpha\;\#\;\beta)$, and, therefore: \\$\neg\neg(\alpha=\beta)\leftrightarrow \neg \neg\neg(\alpha\;\#\;\beta)\leftrightarrow \neg(\alpha\;\#\; \beta)\leftrightarrow \alpha=\beta$.

\smallskip
(ii) Let $\gamma$ be given. 

Consider $\mathcal{F}^\gamma:=\{\alpha\mid\forall m\forall n[\bigl(\alpha(m)\neq\gamma(m)\;\wedge\;\alpha(n)\neq \gamma(n)\bigr)\rightarrow m=n]$. 

$\mathcal{F}^\gamma$ is the set of all $\alpha$ that differ at at most one place from $\gamma$.

Note that $\mathcal{F}^\gamma$ is a spread.

We have two claims. 

\textit{First claim:} $\forall \alpha \in \mathcal{F}^\gamma[\neg\neg(\alpha\sim_V \gamma]$.

The proof is as follows. Let $\alpha$ in $\mathcal{F}^\gamma$ be given. Distinguish two cases.

\smallskip
\textit{Case (1)}. $\exists n[\alpha(n)\neq \gamma(n)]$. Find $n$ such that $\alpha(n)\neq \gamma(n)$ and conclude: \\$\forall m>n[\alpha(m)=\gamma(m)]$ and $\alpha\sim_V\gamma$.

\smallskip
\textit{Case (2)}. $\neg\exists n[\alpha(n)\neq \gamma(n)]$. Conclude: $\forall n[\alpha(n)=\gamma(n)]$ and $\alpha\sim_V\gamma$. 

\smallskip
We thus see: if $\exists n[\alpha(n)\neq \gamma(n)]\;\vee\;\neg\exists n[\alpha(n)\neq \gamma(n)]$, then $\alpha\sim_V\underline \gamma$. 

As $\neg\neg(\exists n[\alpha(n)\neq \gamma(n)]\;\vee\;\neg\exists n[\alpha(n)\neq \gamma(n)])$, also $\neg\neg(\alpha\sim_V\gamma)$. 

\smallskip \textit{Second claim:} $\neg\forall \alpha \in \mathcal{F}^\gamma[\alpha \sim \gamma]$.

In order to see this, assume: $\forall \alpha \in \mathcal{F}^\gamma[\alpha \sim \gamma]$, that is:\\ $\forall \alpha \in \mathcal{F}\exists n\forall m>n[\alpha(m)=\gamma(m)]$.
Using Lemma \ref{L:bcpspreads}, find $p,n$ such that \\$\forall \alpha \in \mathcal{F}^\gamma[\overline{\gamma}p\sqsubset \alpha\rightarrow \forall m>n[\alpha(m)=\gamma(m)]]$. 
Define $m:=\max(p,n+1)$ and define $\alpha$ such that  $\forall n[\alpha(n)\neq\gamma(n)\leftrightarrow n=m]$. 
Note: $\overline{\gamma} p\sqsubset \alpha$ and $m>n$ and  $\alpha(m)\neq \gamma(m)$. Contradiction.
 
 \smallskip Combining our two claims, we see:
\\ not: for all $\alpha$, if $\neg\neg(\alpha\sim_V \gamma)$ then $\alpha\sim_V \gamma$.  

Conclude: $(\mathcal{N}, \sim_V)\models \mathsf{\forall x\neg\forall y[\neg\neg(x=y)\rightarrow x=y]}$.
 \end{proof}

 It follows from Theorem \ref{T:vitaliunstable} that there is no relation $\#_V$ on $\mathcal{N}$ satisfying the requirements of an apartness relation\footnote{See \cite[p. 256]{troelstra}} with respect to $\sim_V$:
 
 \begin{enumerate}[\upshape (i)] \item $\forall \alpha\forall \beta[\neg (\alpha\;\#_V\;\beta)\leftrightarrow \alpha \sim_V\beta]$ \item $\forall\alpha\forall\beta[\alpha\;\#_V\;\beta\rightarrow \beta\;\#_V\;\alpha]$ \item $\forall \alpha\forall \beta[\alpha\;\#_V\;\beta \rightarrow\forall \gamma[\alpha\;\#_V\;\gamma\;\vee\;\gamma\;\#_V\;\beta]]$. \end{enumerate}
 
 The existence of an apartness $\#_V$ would imply, by the first one of these requirements, that $\sim_V$ is a stable relation, as, for any proposition $P$, $\neg\neg\neg P\leftrightarrow\neg P$.
 
 The next Theorem now is no surprise:
\begin{theorem}\label{T:vitalinoapartness}  $(\mathcal{N}, \sim_V)\models \mathsf{\forall x \forall y[\neg}AP(\mathsf{x,y})]$. \end{theorem}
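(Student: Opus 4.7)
The plan is to fix arbitrary $\alpha,\beta\in\mathcal{N}$ and refute the interpretation of $AP(\mathsf{x,y})$ at $\alpha,\beta$ in $(\mathcal{N},\sim_V)$, which unfolds to $\forall\gamma[\neg(\gamma\sim_V\alpha)\;\vee\;\neg(\gamma\sim_V\beta)]$. The key tool is Lemma~\ref{L:bcpdisj}, the disjunctive form of Brouwer's Continuity Principle, applied with $P_0:=\{\gamma\mid\neg(\gamma\sim_V\alpha)\}$ and $P_1:=\{\gamma\mid\neg(\gamma\sim_V\beta)\}$, and with base element $\alpha$ itself. This produces an $m$ such that \emph{either} (a) $\forall\gamma[\overline\alpha m\sqsubset\gamma\rightarrow\neg(\gamma\sim_V\alpha)]$, \emph{or} (b) $\forall\gamma[\overline\alpha m\sqsubset\gamma\rightarrow\neg(\gamma\sim_V\beta)]$.

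I would then refute alternative (a) by taking $\gamma:=\alpha$: clearly $\overline\alpha m\sqsubset\alpha$ and $\alpha\sim_V\alpha$, contradicting $\neg(\gamma\sim_V\alpha)$. For alternative (b), I would construct a hybrid witness that begins like $\alpha$ and eventually agrees with $\beta$: set $\gamma(i):=\alpha(i)$ for $i<m$ and $\gamma(i):=\beta(i)$ for $i\ge m$. Then $\overline\alpha m\sqsubset\gamma$ and $\forall i\ge m[\gamma(i)=\beta(i)]$, so $\gamma\sim_V\beta$, contradicting (b). Both alternatives being impossible, the assumption fails and thus $\neg AP(\alpha,\beta)$ holds; since $\alpha,\beta$ were arbitrary, the theorem follows.

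There is no serious obstacle here. The one point that deserves thought is the choice of base element when invoking Lemma~\ref{L:bcpdisj}: feeding in $\alpha$ (rather than an unspecified $\gamma_0$) is precisely what makes the witness $\gamma:=\alpha$ immediately available in case (a), and what makes the hybrid witness in case (b) a one-line construction. The proof illustrates, more generally, that for the Vitali equivalence relation the negative conjunction $\neg(\gamma\sim_V\alpha)\wedge\neg(\gamma\sim_V\beta)$ can never be forced on a full cylinder $\{\gamma\mid\overline\alpha m\sqsubset\gamma\}$, because such a cylinder always contains both $\alpha$ itself and a sequence whose tail coincides with that of $\beta$.
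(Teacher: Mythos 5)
Your proposal is correct and follows essentially the same route as the paper's own proof: apply the disjunctive form of Brouwer's Continuity Principle at the point $\alpha$, kill the first alternative with $\gamma:=\alpha$, and kill the second with the hybrid sequence that starts like $\alpha$ and has the tail of $\beta$. The only (immaterial) difference is that the paper lets the hybrid agree with $\beta$ from index $p+1$ on rather than from $m$ on.
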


\begin{proof} Let $\alpha, \beta$ be given. 

Assume $(\mathcal{N},\sim_V)\models AP[\alpha, \beta]$, that is, $\forall \gamma[\gamma\nsim_V \alpha\;\vee\;\gamma\nsim_V \beta]$. 

Applying Lemma \ref{L:bcpdisj}, find $p$ such that \\either $\forall \gamma [\overline{\alpha}p\sqsubset \gamma \rightarrow \gamma \nsim_V \alpha]$ or  $\forall \gamma [\overline{\alpha}p\sqsubset \gamma \rightarrow \gamma \nsim_V \beta]$. 

The first of these two alternatives is wrong, as $\overline \alpha p\sqsubset \alpha \;\wedge\; \alpha \sim_V \alpha$.

Conclude:  $\forall \gamma [\overline{\alpha}p\sqsubset \gamma \rightarrow \gamma \nsim_V \beta]$. 

Define $\gamma$ such that $\overline\alpha p\sqsubset \gamma $ and $\forall i>p[\gamma(i)=\beta(i)]$.

Note: $\overline \alpha p\sqsubset \gamma \;\wedge\; \gamma \sim_V \beta $.

Contradiction.

Conclude: $(\mathcal{N}, =_V)\models \neg AP[\alpha, \beta]$.

We thus see: $(\mathcal{N}, =_V)\models \mathsf{\forall x \forall y[\neg}AP(\mathsf{x,y})]$.
\end{proof}

Clearly, the relation defined by the formula $AP$ in the structure $(\mathcal{N}, \sim_V)$ fails to satisfy the first requirement for an apartness relation with respect to $\sim_V$. 

It follows from Theorem \ref{T:vitalinoapartness} that $(\mathcal{N}, \sim_V)$, while realizing $T_{inf}$, does not realize $T^+_{inf}$, see Definitions \ref{D:Tinf} and \ref{D:Tinfplus}. 

\section{A first Vitali variation}\label{S:vitalivar}
There are many intuitionistic versions of the classical Vitali equivalence relation. This is obvious to someone who knows that there are many variations upon the notion of a finite and decidable subset of $\mathbb{N}$, see \cite{veldman1995} and \cite[Section 3]{veldman2005}.
\begin{definition}

We define an infinite sequence 
$\sim^0_V, \sim^1_V, \ldots$ of relations on $\mathcal{N}$ such that $\sim^0_V\;=\;\sim_V$ and, for each $i$,
 
 $$\alpha\sim_V^{i+1}\beta\leftrightarrow  \exists n\forall m>n[\alpha(m)\neq\beta(m)\rightarrow \alpha\sim^i_V\beta].$$
 We also define:
 $$\alpha\sim^\omega_V\beta\leftrightarrow \exists i[\alpha\sim^i_V\beta].$$ \end{definition}
 
 \begin{theorem}\label{T:vitalivariations}$\;$\begin{enumerate}[\upshape (i)]\item $\forall i \forall n\forall s \in \omega^n\forall t \in \omega^n\forall \alpha\forall \beta[s\ast \alpha\sim^i_V t\ast\beta \leftrightarrow \alpha\sim_V^i \beta]$. \item $\forall i\forall \alpha\forall \beta [\alpha\sim_V^{i}\beta\rightarrow \alpha \sim_V^{i+1}\beta]$. \item $\forall i\forall\gamma\neg\forall \alpha[\alpha\sim_V^{i+1}\gamma\rightarrow \alpha \sim_V^i\gamma]$. 
 \item $\forall i\forall j\forall \alpha\forall \beta\forall \gamma[(\alpha \sim_V^i \beta\;\wedge\;\beta \sim_V^j \gamma)\rightarrow \alpha\sim_V^{i+j}\gamma]$. \item $\sim_V^\omega$ is an equivalence relation on $\mathcal{N}$. \end{enumerate}\end{theorem}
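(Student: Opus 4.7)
The plan is to verify (i) and (ii) by routine unfolding of the definition, to prove (iv) by induction on $i+j$ with a case split on the decidable comparison $\alpha(m)=\beta(m)$, to deduce (v) as an immediate consequence of (iv) plus the symmetry of each $\sim_V^i$, and to save (iii) for last, attacking it by a continuity argument applied to the spread of sequences with at most $i+1$ disagreements with $\gamma$.

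For (i), I induct on $i$. The base case $i=0$ amounts to noting that if $s,t\in\omega^n$, then $(s\ast\alpha)(m+n)=(t\ast\beta)(m+n)$ iff $\alpha(m)=\beta(m)$, so the tails coincide up to a uniform shift and the ``eventually equal'' clause is preserved in both directions. The inductive step unfolds the defining clause for $\sim_V^{i+1}$: the antecedent $\alpha(m)\neq\beta(m)$ of the implication shifts in the same manner, and the consequent $\alpha\sim_V^i\beta$ is invariant by the induction hypothesis. Part (ii) is immediate: if $\alpha\sim_V^i\beta$ is already given, the conclusion of the defining implication for $\sim_V^{i+1}$ is outright satisfied, so $n=0$ witnesses $\alpha\sim_V^{i+1}\beta$.

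For (iv), I induct on $i+j$. The base case $i=j=0$ is the classical transitivity of $\sim_V$. For the step, given witnesses $n_1$ for $\alpha\sim_V^i\beta$ and $n_2$ for $\beta\sim_V^j\gamma$, I put $n:=\max(n_1,n_2)$. For $m>n$ with $\alpha(m)\neq\gamma(m)$, the decidability of $\alpha(m)=\beta(m)$ splits the analysis: in one subcase $\alpha(m)=\beta(m)\neq\gamma(m)$ triggers the $\sim_V^j$-witness at $m$, yielding $\beta\sim_V^{j-1}\gamma$, and the induction hypothesis at $(i,j-1)$ gives $\alpha\sim_V^{i+j-1}\gamma$; in the other, $\alpha(m)\neq\beta(m)$ triggers the $\sim_V^i$-witness, yielding $\alpha\sim_V^{i-1}\beta$, and the induction hypothesis at $(i-1,j)$ gives the same conclusion. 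Boundary cases ($i=0$ or $j=0$) reduce to a single subcase. Part (v) now follows: $\sim_V^\omega$ is reflexive because $\alpha\sim_V^0\alpha$, symmetric because each $\sim_V^i$ is symmetric (by an easy induction on $i$), and transitive because (iv) converts witnesses $i,j$ for $\sim_V^\omega$ into the single witness $i+j$.

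The main obstacle is (iii). For each $k\ge 0$, let $\mathcal{F}^\gamma_k$ denote the spread of those $\alpha$'s such that no finite prefix of $\alpha$ contains more than $k$ indices $m$ with $\alpha(m)\neq\gamma(m)$. By induction on $k$, and using prefix invariance (i) to reduce to $\mathcal{F}^{\gamma'}_{k-1}$ after a detected disagreement, I first establish $\alpha\in\mathcal{F}^\gamma_k\rightarrow\alpha\sim_V^k\gamma$ uniformly with witness $n=0$. Now assume, for contradiction, $\forall\alpha[\alpha\sim_V^{i+1}\gamma\rightarrow\alpha\sim_V^i\gamma]$; restricting to $\mathcal{F}^\gamma_{i+1}$ yields $\forall\alpha\in\mathcal{F}^\gamma_{i+1}[\alpha\sim_V^i\gamma]$. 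Iteratively applying Lemma \ref{L:fccspreads} to peel off the $i$ nested existential quantifiers in $\sim_V^i$ produces continuous functions on $\mathcal{F}^\gamma_{i+1}$ assigning to each $\alpha$ the witnesses at each layer down to a final $\sim_V$-witness. Evaluating these at $\alpha=\gamma$ and invoking the continuity moduli, I obtain a finite prefix $\overline{\gamma}p$ and natural number bounds such that the witnesses are locally constant on the cylinder $\overline{\gamma}p\sqsubset\alpha$ within $\mathcal{F}^\gamma_{i+1}$. I then construct $\alpha\in\mathcal{F}^\gamma_{i+1}$ with $\overline{\gamma}p\sqsubset\alpha$ and $i+1$ staggered disagreements $m_1<m_2<\cdots<m_{i+1}$, each placed well beyond the corresponding continuity bound, so that triggering the implication chain forces $\alpha(m_{i+1})=\gamma(m_{i+1})$, contradicting $\alpha(m_{i+1})\neq\gamma(m_{i+1})$ by construction, in the same spirit as the second claim in the proof of Theorem \ref{T:vitaliunstable} but nested $i+1$ times. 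The technical difficulty, and main obstacle I anticipate, is orchestrating the iterated continuity extraction and the staggered placement of disagreements correctly through the $i+1$ nested layers.
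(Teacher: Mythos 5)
Parts (i), (ii), (iv) and (v) of your proposal are correct and essentially coincide with the paper's proof: (iv) is exactly the paper's induction on $i+j$ with the case split on the decidable comparison of $\alpha(m)$ and $\beta(m)$ (the paper's Cases (1)--(4) are your base case, your two boundary cases, and your two-subcase step), and the paper likewise disposes of (i), (ii), (v) by routine unfolding. For (iii) you also choose the same object as the paper, the spread $\mathcal{F}^\gamma_k$ of sequences differing from $\gamma$ in at most $k$ places, and your positive claim $\mathcal{F}^\gamma_k\subseteq\{\alpha\mid\alpha\sim_V^k\gamma\}$, proved by induction on $k$ using prefix invariance (i), is the paper's First claim.

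The genuine gap is in the negative half of (iii). You assume $\forall\alpha\in\mathcal{F}^\gamma_{i+1}[\alpha\sim_V^i\gamma]$ and propose to ``iteratively apply Lemma \ref{L:fccspreads} to peel off the $i$ nested existential quantifiers,'' obtaining ``continuous functions on $\mathcal{F}^\gamma_{i+1}$ assigning to each $\alpha$ the witnesses at each layer,'' and then to evaluate all of these at $\alpha=\gamma$. This cannot be done as stated: the layer-two witness is only guaranteed to exist for those $\alpha$ that actually possess a disagreement with $\gamma$ beyond the layer-one witness $n_1(\alpha)$, so the deeper witness functions are not total on $\mathcal{F}^\gamma_{i+1}$, and in particular none of them is defined at $\alpha=\gamma$, which triggers no implication at all. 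The correct procedure must alternate: extract one continuity modulus, place one disagreement beyond it, pass to the resulting sub-spread, extract the next modulus at the modified point, and so on, $i+1$ times. Your remark about ``staggered'' disagreements suggests you sense this, but the argument as written does not carry it out, and you yourself flag the orchestration as unresolved. The paper sidesteps the entire difficulty by proving $\neg\forall\alpha\in\mathcal{F}^\gamma_{i+2}[\alpha\sim_V^{i+1}\gamma]$ by induction on $i$: a single application of Lemma \ref{L:bcpspreads} at $\gamma$ yields $p,n$; one then shows that every $\beta\in\mathcal{F}^\gamma_{i+1}$ would satisfy $\beta\sim_V^i\gamma$, by prepending $\overline\gamma$ up to a disagreement placed at $\max(n+1,p)$ and invoking prefix invariance (i), which contradicts the induction hypothesis. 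I recommend recasting your argument in that inductive form; it performs the bookkeeping of your nested extraction automatically, needs only one continuity application per step, and requires Brouwer's Continuity Principle rather than continuous choice.
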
 
 
 \begin{proof} (i) One proves this easily by induction.
 
 \smallskip
 (ii) Obvious. 
 
 \smallskip (iii) Let $\gamma$ be given. 
 
 For each $i$, define $\mathcal{F}^\gamma_i:=\{\alpha\mid \forall s \in [\omega]^{i+1}\exists j<i+1[\alpha\circ s(j)=\gamma\circ s(j)]\}$. 
 
 Note: for each $i$, $\mathcal{F}^\gamma_i$ is a spread, and $\mathcal{F}^\gamma_i \subsetneq \mathcal{F}^\gamma_{i+1}$.
 
 For each $i$, $\mathcal{F}^\gamma_i$ consists of all $\alpha$ that assume at most $i$ times a value different from the value assumed by $\gamma$. In particular, $\mathcal{F}^0_\gamma=\{\gamma\}$.

 Note: for all $i, m, \alpha, \beta$, \\if $m=\mu n[\alpha(n)\neq\gamma(n)]$ and  $\alpha =\overline{\alpha}(m+1)\ast\beta$, then $\alpha \in \mathcal{F}^\gamma_{i+1}\leftrightarrow \beta \in \mathcal{F}^\gamma_i$. 
 
 \smallskip
 We have two claims. 
 
 \smallskip \textit{First claim:} $\forall i\forall \alpha\in \mathcal{F}^\gamma_i[\alpha\sim_V^i\gamma]$. 
 
 We prove this claim by induction.
 
 The starting point of the induction is the  observation: \\$\forall \alpha \in \mathcal{F}_0^\gamma[\alpha =\gamma]$, so $\forall \alpha \in \mathcal{F}^\gamma_0[\alpha \sim_V^0\gamma]$.
 
 \smallskip
 Now assume $i$  is given such that $\forall \alpha \in \mathcal{F}_i^\gamma[\alpha \sim^i_V\gamma]$. 
 
 Assume $\alpha\in\mathcal{F}^\gamma_{i+1}$ and $\exists n[\alpha(n)\neq \gamma(n)]$. Find $n$ such that $\alpha(n)\neq \gamma(n)$. Find $\beta$ such that $\alpha=\overline\alpha(n+1)\ast\beta$, and note: $\beta \in \mathcal{F}^\gamma_i$ and thus, by the induction hypothesis, $\beta\sim_V^i \gamma$. Conclude, using (i): $\alpha \sim_V^i \gamma$. 
 
 We thus see:\\ $\forall \alpha \in \mathcal{F}^\gamma_{i+1}[\exists n[\alpha(n)\neq \gamma(n)]\rightarrow \alpha \sim^i_V\gamma]$, that is: $\forall \alpha \in \mathcal{F}^\gamma_{i+1}[\alpha\sim^{i+1}_V \gamma]$. 
 
 This completes the proof of the induction step.

\smallskip \textit{Second claim:} $\forall i \neg \forall \alpha \in \mathcal{F}^\gamma_{i+1}[\alpha\sim^i_V \gamma]$.

We again use induction.

We first prove: $\neg \forall \alpha \in \mathcal{F}^\gamma_1[\alpha\sim_V \gamma]$.

Assume $\forall \alpha \in \mathcal{F}^\gamma_1[\alpha\sim_V \gamma]$, that is: $\forall \alpha \in \mathcal{F}_1\exists n\forall m>n[\alpha(m) =\gamma(m)]$. 

Note: $\gamma \in \mathcal{F}_1^\gamma$ and $\mathcal{F}^1_\gamma$ is a spread. 

Using Lemma \ref{L:bcpspreads},  find $p, n$ such that $\forall \alpha \in \mathcal{F}_1[\overline \gamma p\sqsubset\alpha\rightarrow \forall m>n[\alpha(m)=\gamma(m)]$. Define $m:=\max(n+1,p)$ and define $\alpha$ such that  $\forall n[\alpha(n)=\gamma(n)\leftrightarrow n\neq m]$.  Note: $\alpha \in \mathcal{F}_1$ and  $\overline \gamma p\sqsubset \alpha$ and $\alpha(m)\neq \gamma(m)$ and $m>n$. Contradiction. 

Conclude:  $\neg \forall \alpha \in \mathcal{F}^\gamma_1[\alpha\sim_V \gamma]$.

\smallskip Now let $i$ be given such that $ \neg \forall \alpha \in \mathcal{F}^\gamma_{i+1}[\alpha\sim_V^i \gamma]$.

 We want to prove: $ \neg \forall \alpha \in \mathcal{F}^\gamma_{i+2}[\alpha\sim_V^{i+1} \gamma]$.
 
 Assume: $\forall \alpha \in \mathcal{F}^\gamma_{i+2}[\alpha\sim_V^{i+1} \gamma]$, that is:\\ $\forall \alpha \in \mathcal{F}^\gamma_{i+2}\exists n\forall m>n[\alpha(m)\neq \gamma(m)\rightarrow \alpha\sim^{i+1}_V\gamma]$. Using Lemma \ref{L:bcpspreads}, find $p,n$ such that $\forall \alpha \in \mathcal{F}^\gamma_{i+2}[(\overline \gamma p\sqsubset \alpha \;\wedge\;m>n\;\wedge\;\alpha(m)\neq \gamma(m))\rightarrow \alpha\sim^i_V\gamma]$. Define $m:=\max(n+1, p)$.
  Let $\beta$ in $\mathcal{F}^\gamma_{i+1}$ be given. Define $\alpha$ such that $m=\mu n[\alpha(n)\neq \gamma(n)]$ and $\forall n> m[\alpha(n)=\beta(n)]$.   Note:  $\alpha \in \mathcal{F}^\gamma_{i+2}$ and $\alpha(m)\neq \gamma(m)$ and $m>n$, so $\alpha \sim_V^{i}\gamma$, and, therefore, by (i), $\beta\sim_V^{i}\gamma$. We thus see: $\forall \beta \in \mathcal{F}^\gamma_{i+1}[\beta\sim_V^i\gamma]$ and, by the induction hypothesis, obtain a contradiction. 
 
 This completes the proof of the induction step.
 
 \smallskip Taking our first and second claim together, we obtain the conclusion: \\$\forall\gamma\forall i\neg \forall \alpha[\alpha\sim^{i+1}_V \gamma\rightarrow \alpha\sim^{i}_V \gamma]$. 
 
 \smallskip (iv) We have to prove:
 
  for all $i$, for all $j$,  $\forall \alpha\forall \beta\forall \gamma[(\alpha \sim_V^i \beta\;\wedge\;\beta \sim_V^j \gamma)\rightarrow \alpha\sim_V^{i+j}\gamma]$.
 
 We use induction on $i+j$ and distinguish four cases.
 
 \smallskip
 \textit{Case (1):} $i=j=0$. Assume $\alpha\sim^0_V\beta$ and $\beta\sim^0_V\gamma$. Find $n,p$ such that $\forall m>n[\alpha(m)=\beta(m)]$ and $\forall m>p[\beta(m)=\gamma(m)$. Define $q:=\max(n, p)$ and note: $\forall m>q[\alpha(m)=\gamma(m)]$. Conclude: $\alpha\sim^0_V\gamma$.
 
 \smallskip
 \textit{Case (2):} $i=0$ and $j>0$.  Assume $\alpha\sim^0_V\beta$ and $\beta\sim^j_V\gamma$. Find $n,p$ such that $\forall m>n[\alpha(m)=\beta(m)]$ and $\forall m>p[\beta(m)\neq\gamma(m)\rightarrow \beta\sim^{j-1}_V\gamma]$. Define $q:=\max(n,p)$.
 
  Assume $m>q$ and note: if $\alpha(m)\neq \gamma(m)$, then $\beta(m) \neq \gamma(m)$ and $\beta\sim^{j-1}_V\gamma$. Using the induction hypothesis, conclude: $\alpha\sim^{j-1}_V \gamma$. 
 
 We thus see: $\forall m>q[\alpha(m)\neq \gamma(m)\rightarrow \alpha\sim^{j-1}_V\gamma]$, that is: $\alpha\sim^j_V\gamma$.

 \smallskip \textit{Case (3):} $i>0$ and $j=0$.  Assume $\alpha\sim^i_V\beta$ and $\beta\sim^0_V\gamma$. Find $n,p$ such that $\forall m>n[\alpha(m)\neq\beta(m)\rightarrow \alpha\sim^{i-1}_V \beta]$ and $\forall m>p[\beta(m)=\gamma(m)]$. Define $q:=\max(n,p)$.
 
  Assume $m>q$ and note: if $\alpha(m)\neq \gamma(m)$, then $\alpha(m) \neq \beta(m)$ and $\alpha\sim^{i-1}_V\beta$. Using the induction hypothesis, conclude: $\alpha\sim^{i-1}_V \gamma$. 
 
 We thus see: $\forall m>q[\alpha(m)\neq \gamma(m)\rightarrow \alpha\sim^{i-1}_V\gamma]$, that is: $\alpha\sim^i_V\gamma$. 
 
 \smallskip \textit{Case (4):} $i>0$ and $j>0$. 
 Assume $\alpha\sim^i_V\beta$ and $\beta\sim^j_V\gamma$. Find $n,p$ such that $\forall m>n[\alpha(m)\neq\beta(m)\rightarrow \alpha\sim^{i-1}_V \beta]$ and $\forall m>p[\beta(m)\neq\gamma(m)\rightarrow \beta\sim^{j-1}_V\gamma]$. Define $q:=\max(n,p)$.
 
 Assume $m>q$ and $\alpha(m)\neq \gamma(m)$. Then \textit{either:} $\alpha(m)\neq\beta(m)$ and $\alpha\sim^{i-1}\beta$, and, by the induction hypothesis, $\alpha\sim^{i+j-1}_V\gamma$, \textit{or:} $\beta(m)\neq\gamma(m)$ and $\beta\sim^{j-1}\gamma$ and, by the induction hypothesis, $\alpha\sim^{i+j-1}\gamma$.
 
 We thus see: $\forall m>q[\alpha(m)\neq\gamma(m)\rightarrow \alpha\sim^{i+j-1}\gamma]$. 
 Conclude: $\alpha\sim^{i+j}\gamma$. 
 
 \smallskip (v) is an easy consequence of (iv). 
 \end{proof}

  The next Theorem shows that the structures $(\mathcal{N}, \sim_V)$ and $(\mathcal{N}, \sim^\omega_V)$ have a property in common.
 
 \begin{theorem}[$\sim^\omega_V$ is not stable]\label{T:omegaunstable} $\;$ 
 
  $(\mathcal{N}, \sim_V^\omega)\models \mathsf{\forall x\neg\forall y[\neg\neg(x=y)\rightarrow x =y]}$. \end{theorem}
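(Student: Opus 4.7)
The plan is to mimic the architecture of Theorem \ref{T:vitaliunstable}(ii), but with the whole hierarchy $\{\sim_V^i\}_{i\in\omega}$ playing the role previously played by $\sim_V$ alone.

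Fix $\gamma$ and suppose, for contradiction, that $\forall\alpha[\neg\neg(\alpha\sim_V^\omega\gamma)\rightarrow\alpha\sim_V^\omega\gamma]$. For every $k$, every $\alpha$ in the spread $\mathcal{F}^\gamma_{k+1}$ (from the proof of Theorem \ref{T:vitalivariations}(iii), consisting of sequences differing from $\gamma$ at at most $k+1$ positions) satisfies $\neg\neg(\alpha\sim_V^0\gamma)$ by exactly the case-split used in the proof of Theorem \ref{T:vitaliunstable}(ii). Hence $\neg\neg(\alpha\sim_V^\omega\gamma)$, and our assumption supplies $\forall\alpha\in\mathcal{F}^\gamma_{k+1}\exists i[\alpha\sim_V^i\gamma]$.

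Now I apply Lemma \ref{L:bcpspreads} at the basepoint $\gamma\in\mathcal{F}^\gamma_{k+1}$ to extract numbers $p_k,i_k$ with
$\forall\beta\in\mathcal{F}^\gamma_{k+1}[\overline\gamma p_k\sqsubset\beta\rightarrow\beta\sim_V^{i_k}\gamma].$
A short translation argument removes the restriction to the neighbourhood: given an arbitrary $\beta'\in\mathcal{F}^\gamma_{k+1}$, define $\beta:=\overline\gamma p_k\ast\bigl(\beta'(p_k),\beta'(p_k+1),\ldots\bigr)$; this $\beta$ lies in $\mathcal{F}^\gamma_{k+1}$, starts with $\overline\gamma p_k$, so $\beta\sim_V^{i_k}\gamma$, and $\beta'\sim_V^0\beta$ (they agree past $p_k$), so Theorem \ref{T:vitalivariations}(iv) yields $\beta'\sim_V^{i_k}\gamma$. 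Thus $\forall\beta'\in\mathcal{F}^\gamma_{k+1}[\beta'\sim_V^{i_k}\gamma]$, and Theorem \ref{T:vitalivariations}(iii) then forces $i_k\geq k+1$.

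The final, and main, step is to compress this growing lower bound into a genuine contradiction. The plan is to merge the family $\{(\mathcal{F}^\gamma_{k+1},p_k,i_k)\}_{k\in\omega}$ into a single spread $\mathcal{F}$ whose elements encode, via their initial values, a ``level'' $k$ and then an element of (a shifted copy of) $\mathcal{F}^\gamma_{k+1}$; Axiom \ref{ax:countable choice} is used to assemble the $p_k,i_k$ sequence. Inside the single spread $\mathcal{F}$ one re-runs the argument above: Brouwer's Continuity Principle now yields a single uniform witness $i_\ast$ on a neighbourhood of $\gamma$, and picking any level $k$ with $k\geq i_\ast$ exhibits, inside that neighbourhood, a sub-spread isomorphic to $\mathcal{F}^\gamma_{k+1}$ on which $\forall\beta\sim_V^{i_\ast}\gamma$ would hold with $i_\ast<k+1$, contradicting Theorem \ref{T:vitalivariations}(iii).

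The hard part is unquestionably the last step: finding a spread structure $\mathcal{F}$ in which every element is provably $\neg\neg\sim_V^\omega\gamma$ while the elements at arbitrarily large ``level'' $k$ lie arbitrarily close to $\gamma$, so that the continuity argument actually bites and the translation device used above still lifts the local uniformity to the unrestricted universal that Theorem \ref{T:vitalivariations}(iii) refutes. The rest is routine book-keeping following the template of Theorem \ref{T:vitaliunstable}(ii).
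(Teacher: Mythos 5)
Your strategy is the paper's strategy: build a single spread containing $\gamma$ on which every element doubly-negatedly $\sim_V^\omega$-relates to $\gamma$, apply Lemma \ref{L:bcpspreads} once to extract a uniform index $i$ valid on a neighbourhood $\overline\gamma p$ of $\gamma$, translate that local statement into an unrestricted $\forall\alpha\in\mathcal{F}^\gamma_{q}[\alpha\sim_V^i\gamma]$ for some $q>i$, and contradict the Second claim in the proof of Theorem \ref{T:vitalivariations}(iii). Your translation device (prepend $\overline\gamma p$, observe the modified sequence is $\sim_V^0$-related to the original, and invoke Theorem \ref{T:vitalivariations}(iv)) is also exactly the paper's.

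However, you have left the one load-bearing object --- the single spread --- as an acknowledged ``hard part'', and that is precisely where the proof lives. The paper's resolution is short: do not encode the level in a separate coordinate (which would pull the level-$k$ elements away from $\gamma$ and exclude $\gamma$ itself from the spread); instead let the level of $\alpha$ \emph{be} the position of its first disagreement with $\gamma$. Concretely, $\mathcal{F}^\gamma_\omega:=\{\alpha\mid\forall i[i=\mu n[\alpha(n)\neq\gamma(n)]\rightarrow\alpha\in\mathcal{F}^\gamma_{i+1}]\}$. Then $\gamma\in\mathcal{F}^\gamma_\omega$; any element whose first deviation occurs at $i$ lies in $\mathcal{F}^\gamma_{i+1}$ and hence satisfies $\alpha\sim_V^{i+1}\gamma$, so a single case split on $\exists n[\alpha(n)\neq\gamma(n)]$ gives $\neg\neg(\alpha\sim_V^\omega\gamma)$ throughout $\mathcal{F}^\gamma_\omega$; and the elements of level $q$ automatically begin with $\overline\gamma q$, which is what makes the continuity argument bite. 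With this in hand your entire first stage --- the level-by-level extraction of $p_k,i_k$ and the appeal to Axiom \ref{ax:countable choice} --- becomes unnecessary: one application of Lemma \ref{L:bcpspreads} to $\mathcal{F}^\gamma_\omega$ at $\gamma$ yields $p,i$, and taking $q:=\max(p,i+1)$ and inserting a forced disagreement at place $q$ in front of an arbitrary $\alpha\in\mathcal{F}^\gamma_q$ already produces the contradiction. (A minor separate slip: for $k\ge 1$ a single case split does not give $\neg\neg(\alpha\sim_V^0\gamma)$ on $\mathcal{F}^\gamma_{k+1}$; what you get there, and all you need, is $\alpha\sim_V^{k+1}\gamma$ and hence $\neg\neg(\alpha\sim_V^\omega\gamma)$.)
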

  
  \begin{proof} Let $\gamma$ be given. 
  
  We repeat a definition we gave in the proof of Theorem \ref{T:vitalivariations}(iii).
  
  For each $i$, $\mathcal{F}^\gamma_i:=\{\alpha\mid\forall s \in [\omega]^{i+1}\exists j<i+1[\alpha\circ s(j)=\gamma\circ s(j)]\}$.

  In the proof of Theorem \ref{T:vitalivariations}(iii), we saw: $\forall i\forall \alpha \in \mathcal{F}^\gamma_i[\alpha\sim^i_V \gamma]$.
  
  Conclude: $\forall i\forall \alpha \in \mathcal{F}^\gamma_i[\alpha\sim^\omega_V \gamma]$.
  
  We now define: $\mathcal{F}^\gamma_\omega:=\{\alpha\mid \forall i[i=\mu n[\alpha(n)\neq \gamma(n)]\rightarrow\alpha\in \mathcal{F}_{i+1}]\}$.  
  
  Like each $\mathcal{F}^\gamma_i$,  $\mathcal{F}^\gamma_\omega$ is a spread, and $\gamma \in \mathcal{F}^\gamma_\omega$. 
  
  We have two claims.
  
  \textit{First claim:} $\forall \alpha \in \mathcal{F}^\gamma_\omega[\neg\neg(\alpha\sim_V^\omega \gamma)]$. 
  
  The argument
   is as follows.  Let $\alpha$ in $\mathcal{F}^\gamma_\omega$ be given and distinguish two cases.
   
   \textit{Case (1):} $\neg\exists n[\alpha(n)\neq \gamma(n)]$.  Then $\alpha=\gamma$ and $\alpha\sim_V^\omega \gamma$.
   
   \textit{Case (2):}  $\exists n[\alpha(n)\neq \gamma(n)]$. Find $i:=\mu n[\alpha(n)\neq\gamma(n)]$. Note:  $\alpha \in \mathcal{F}_{i+1}^\gamma$ and $\alpha\sim^\omega_V \gamma$. \\As $\neg\neg (\exists n[\alpha(n)\neq \gamma(n)]\;\vee\;\neg\exists n[\alpha(n)\neq \gamma(n)])$, also $\neg\neg(\alpha\sim_V^\omega \gamma)$. 
   
   \textit{Second claim:} $\neg \forall \alpha \in \mathcal{F}^\gamma_\omega[\alpha\sim_V^\omega \gamma]$. 
   
   In order to see this, assume: $\forall \alpha \in \mathcal{F}^\gamma_\omega[\alpha\sim_V^\omega \gamma]$, that is: $\forall \alpha \in \mathcal{F}_\omega\exists i[\alpha\sim^i_V \gamma]$. 
   
   Using Lemma \ref{L:bcpspreads}, find $p,i$ such that $\forall \alpha \in \mathcal{F}^\gamma_\omega[\overline \gamma p\sqsubset \alpha\rightarrow \alpha\sim^i_V \gamma]$.

   Define $q:=\max(p, i+1)$. Let $\alpha$ in $\mathcal{F}^\gamma_q$ be given. 
    Define $\beta$ such that \\$\forall n<q[\beta(n)=\gamma(n)]$ and $\beta(q)\neq \gamma(q)$ and $\forall n>q[\beta(n)=\alpha(n)]$.
   
     Note: $\beta\in \mathcal{F}_{q+1}$ and $q=\mu n[\beta(n)\neq\gamma(n)]$, and, therefore, $\beta\in \mathcal{F}_\omega^\gamma$.

     As $\overline \gamma q\sqsubset \beta$, we conclude: $\beta \sim_V^i\gamma$. 
     
     As $\beta\sim^0_V\alpha$, also $\alpha\sim_V^i\gamma$. 
     
     We thus see: $\forall \alpha \in \mathcal{F}_q[\alpha\sim_V^i \gamma]$.
     
     As $q>i$, this contradicts the Second claim in the proof of Theorem \ref{T:vitalivariations}(iii).

   Taking our two claims together, we  conclude: 
   
$\forall \gamma\neg \forall\alpha\in\mathcal{F}^\gamma_\omega[\neg\neg(\alpha\sim^\omega_V\gamma)\rightarrow\alpha\sim^\omega_V\gamma]$. 
 
 Conclude: 
 $(\mathcal{N}, \sim^\omega_V)\models \mathsf{\forall x\neg\forall y[\neg\neg(x=y)\rightarrow x =y]}$.
   \end{proof}
  
 We did not succeed in finding a sentence $\psi$ such that $(\mathcal{N}, \sim_V)\models \psi$ and \\$(\mathcal{N}, \sim^\omega_V)\models \neg \psi$.
  
  \section{More and more Vitali relations}\label{S:morevitali}
  
  In \cite{veldman1995}, \cite{veldman1999} and \cite[Section 3]{veldman2005}, one studies the set $$\mathbf{Fin}:=\{\alpha\mid\alpha\sim_V\underline 0\}=\{\alpha\mid\exists n \forall m>n[\alpha(m)=0]\}.$$ For each $\alpha$, $\alpha\in\mathbf{Fin}$ if and only if $D_\alpha:=\{m\mid\alpha(m)\neq 0\}$ is a \textit{finite} subset of $\mathbb{N}$.

  For each $i$, the set $\{\alpha\mid\alpha\sim_V^i \underline 0\}$ is called, in   \cite{veldman1999} and \cite{veldman2005}, the $i$-th \textit{perhapsive extension} of the set $\mathbf{Fin}$. 
 It is shown, in \cite{veldman1995}, \cite{veldman1999} and \cite{veldman2005},  that the process of building perhapsive extensions of $\mathbf{Fin}$ can be carried on into the transfinite.

  In a similar way, the Vitali equivalence relation  $\sim_V$ admits of transfinitely many extensions.
  
  The relation $\sim_V^\omega$ is only a \textit{first} extension of $\sim_V$. Let us consider a second one. 
  
 \medskip Recall: $\forall\alpha\forall\beta[\alpha\sim_V^\omega\beta\sim\exists i[\alpha\sim_V^ i\beta]]$.
  
  \begin{definition}

We define an infinite sequence 
$\sim_V^{\omega+0}=\sim_V^\omega, \sim^{\omega+1}_V, \sim^{\omega+2}_V, \ldots$ of relations on $\mathcal{N}$, such that,
for each $i>0$,
 $$\alpha\sim_V^{\omega+i+1}\beta\leftrightarrow  \exists n\forall m>n[\alpha(m)\neq\beta(m)\rightarrow \alpha\sim^{\omega+i}_V\beta].$$
 We also define:
 $$\alpha\sim^{\omega+\omega}_V\beta\leftrightarrow \exists i[\alpha\sim^{\omega+i}_V\beta].$$ \end{definition}
 One may prove  analogues of Theorems  \ref{T:vitalivariations} and \ref{T:omegaunstable} and conclude:
 
 $\sim_V^{\omega+\omega}$ is an  equivalence relation on $\mathcal{N}$, properly extending $\sim_V^\omega$, that, like $\sim_V$ and $\sim_V^\omega$, is not stable in the sense of Theorem \ref{T:omegaunstable}.
 
 \smallskip
 
 One may continue and define $\sim_V^{\omega+\omega+\omega}$, and $\sim_V^{\omega+\omega+\omega+\omega}$ and so on. 
 
 The process of building such extensions leads  further into the transfinite, as follows. 
 
 \begin{definition}\label{D:perhapsiveextvitali} Let $R$ be  binary relation  on $\mathcal{N}$. 
 
 We define a binary relation $R^+$ on $\mathcal{N}$ by:
 $$\alpha R^+\beta\leftrightarrow \exists n\forall m>n[\alpha(m)\neq \beta(m)\rightarrow \alpha R \beta].$$

 We  let $\mathcal{E}$ be the least class of binary relations on $\mathcal{N}$ such that \begin{enumerate}[\upshape (i)] \item the Vitali equivalence relation  $\sim_V$ belongs to $\mathcal{E}$, and, \item for every $R$ in $\mathcal{E}$, also $R^+  \in \mathcal{E}$, and, \item for every infinite  sequence $R_0, R_1,  \ldots$ of elements of $\mathcal{E}$, also $\bigcup_i R_i \in \mathcal{E}$. \end{enumerate}
  
  The elements of $\mathcal{E}$ are the \emph{extensions of the  Vitali  equivalence relation}. 
  
   \end{definition}
   
   Note that $<_V^\omega$ and $<_V^{\omega+\omega}$ are in $\mathcal{E}$.
   
   In general, a relation $R$ in $\mathcal{E}$ is not transitive. One may prove, for instance, that the relation $<_V^1$, while belonging to $\mathcal{E}$,  is not transitive.
   
   The next Theorem shows that $\mathcal{E}$ contains many transitive relations.
   
   \begin{theorem}[$\mathcal{E}$ contains many transitive relations]\label{T:transitive} \begin{enumerate}[\upshape (i)] \item $\sim_V$ is transitive.
   
   \item Given any transitive $R$ in $\mathcal{E}$, there exists a transitive $T$ in $\mathcal{E}$ such that $R^+\subseteq T$.
   
   \item Given any infinite and increasing sequence $R_0\subseteq R_1\subseteq \ldots$ of transitive relations in $\mathcal{E}$, also $\bigcup_i R_i$ is a transitive relation in $\mathcal{E}$.\end{enumerate} \end{theorem}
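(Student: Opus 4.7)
Part (i) is immediate from the definition of $\sim_V$: given witnesses $n_1,n_2$ for $\alpha\sim_V\beta$ and $\beta\sim_V\gamma$, the value $n=\max(n_1,n_2)$ witnesses $\alpha\sim_V\gamma$. Part (iii) is equally direct: if $R_0\subseteq R_1\subseteq\cdots$ is an increasing chain of transitive relations in $\mathcal{E}$, then $\bigcup_i R_i\in\mathcal{E}$ by closure under countable unions, and any pairs $(\alpha,\beta),(\beta,\gamma)\in\bigcup_i R_i$ lie in a common $R_k$ (by taking the larger of the two witnessing indices), so transitivity of $R_k$ gives $\alpha R_k\gamma\in\bigcup_i R_i$.

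For (ii), I propose $T=\bigcup_{k\in\omega}R^{(k)}$, where $R^{(0)}=R$ and $R^{(k+1)}=(R^{(k)})^+$. Each $R^{(k)}$ lies in $\mathcal{E}$ by iterated application of the $+$-operation, so $T\in\mathcal{E}$ by countable-union closure, and $R^+=R^{(1)}\subseteq T$. Note that $S\subseteq S^+$ for any relation $S$ (the implication in the definition of $S^+$ is trivially satisfied whenever $\alpha S\beta$ already holds), so $R^{(k)}\subseteq R^{(k+1)}$ and $T$ is genuinely an increasing union.

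To show $T$ is transitive, the plan is to establish the composition lemma $R^{(k)}\circ R^{(l)}\subseteq R^{(k+l)}$ for all $k,l\geq 0$, by induction on $k+l$; transitivity of $T$ then follows, for if $\alpha T\beta$ and $\beta T\gamma$ then $\alpha R^{(k)}\beta$ and $\beta R^{(l)}\gamma$ for some $k,l$, whence $\alpha R^{(k+l)}\gamma\in T$. The base case $k=l=0$ is the assumed transitivity of $R$. For $k,l\geq 1$ one takes $n=\max(n_1,n_2)$ and, for each $m>n$ with $\alpha(m)\neq\gamma(m)$, decides (using decidability of equality on $\omega$) whether $\alpha(m)=\beta(m)$: in the equal case one has $\beta(m)\neq\gamma(m)$, extracts $\beta R^{(l-1)}\gamma$ and applies the induction hypothesis on $(k,l-1)$ to obtain $\alpha R^{(k+l-1)}\gamma$; in the unequal case one extracts $\alpha R^{(k-1)}\beta$ and applies the induction hypothesis on $(k-1,l)$ to the same effect. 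The main obstacle is the boundary cases $k=0$ or $l=0$, where one side of the case split yields only the global fact $\alpha R\beta$ (with no index to decrement) and leaves the sub-case $\alpha(m)\neq\beta(m)=\gamma(m)$ untreated by the straightforward induction. Exactly this sub-case is what the proof of Theorem \ref{T:vitalivariations}(iv) handles by invoking the eventual-equality property of $\sim_V$; for a general transitive $R\in\mathcal{E}$ one exploits the corresponding fact that every relation in $\mathcal{E}$ contains $\sim_V$ and is invariant under $\sim_V$ on either side, both properties being readily propagated through the generating clauses (i), (ii), (iii) of Definition \ref{D:perhapsiveextvitali}. This invariance lets one transfer the relevant pointwise apartness between $\alpha$ and $\gamma$ to an apartness involving $\beta$ and close the case in the same spirit as the proof of Theorem \ref{T:vitalivariations}(iv).
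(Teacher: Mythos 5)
Your construction for part (ii) is exactly the paper's: set $R^{(0)}=R$, $R^{(k+1)}=(R^{(k)})^+$, $T=\bigcup_k R^{(k)}$, and reduce transitivity of $T$ to the composition lemma $R^{(k)}\circ R^{(l)}\subseteq R^{(k+l)}$, proved by induction on $k+l$ in imitation of the proof of Theorem \ref{T:vitalivariations}(iv); parts (i) and (iii) also match the paper. You have, moreover, put your finger on the one genuinely non-routine point. Cases (2) and (3) of the proof of Theorem \ref{T:vitalivariations}(iv) use that $\alpha\sim_V^0\beta$ yields a bound beyond which $\alpha$ and $\beta$ agree \emph{pointwise}, so that a late apartness $\alpha(m)\neq\gamma(m)$ forces $\beta(m)\neq\gamma(m)$; for a general transitive $R$ in $\mathcal{E}$ the relation $R^{(0)}=R$ supplies no such bound, and the sub-case $\alpha(m)\neq\beta(m)=\gamma(m)$ is exactly where the imitation breaks down. (The paper's own proof of (ii) is a two-line deferral to Theorem \ref{T:vitalivariations}(iv) and passes over this point in silence.)

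What I am not convinced by is your proposed repair. The two facts you invoke --- that $\sim_V\;\subseteq R$ and that every $R$ in $\mathcal{E}$ is invariant under $\sim_V$ on either side --- are correct and provable by induction on $\mathcal{E}$, but they do not appear to close the sub-case. Suppose $k=0$, $\alpha R\beta$, $\beta R^{(l)}\gamma$ with witness $p$, and $m>p$ with $\alpha(m)\neq\gamma(m)$ but $\beta(m)=\gamma(m)$. If you transfer the apartness to $\beta$ by passing to $\beta'$ with $\beta'(m):=\alpha(m)$ and $\beta'=\beta$ elsewhere, then indeed $\beta'\sim_V\beta$, hence $\alpha R\beta'$ and $\beta'R^{(l)}\gamma$; but the witness for $\beta'R^{(l)}\gamma$ that $\sim_V$-invariance delivers is at least $m$ itself, so the manufactured apartness $\beta'(m)\neq\gamma(m)$ is never late enough to let you extract $\beta'R^{(l-1)}\gamma$, and modifying $\alpha$ or $\gamma$ instead runs into the same obstruction. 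You should either carry this step out explicitly and verify that it really goes through, or replace it by a different argument for the boundary cases --- for instance an induction on the build-up of $R$ as an element of $\mathcal{E}$, which lets a given proof of $\alpha R\beta$ be unwound into finitely much data bottoming out in instances of $\sim_V$, where the pointwise-agreement argument does apply. As written, part (ii) of your proof has a gap at precisely the point you flagged.
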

   
    \begin{proof} (i) Obvious.
    
    \smallskip (ii)  We take our inspiration from Theorem \ref{T:vitalivariations} (iv) and (v).

    Let a transitive $R$ in $\mathcal{E}$ be given.
    
    Define an infinite sequence $R^0, R^1, \ldots$ of elements of $\mathcal{E}$ such that $R^0=R$ and, for each $i$, $R^{i+1}= (R^ i)^+$. 
    
    One may prove: for all $i$, for all $j$, $\forall\alpha\forall\beta\forall \gamma[(\alpha R^i\beta\;\wedge\;\beta R^i\gamma)\rightarrow \alpha R^{i+j} \gamma]$, as it is done for the special case $R=\sim_V$ in the proof of Theorem \ref{T:vitalivariations}(iv). 
    
    Define $T:=\bigcup_i R^i$ and note: $T\in \mathcal{E}$, $R^+\subseteq T$ and $T$ is transitive. 
    
   \smallskip (iii) Note: for every increasing sequence $R_0\subseteq R_1\subseteq \ldots$ of transitive relations on $\mathcal{N}$, also $\bigcup_i R_i$ is transitive. 
    \end{proof}
    
    Theorem \ref{T:transitive} will gain significance after Corollary \ref{C:hierarchy}, which shows that, for every $R$ in $\mathcal{E}$, $R\subseteq R^+$ and $\neg(R^+\subseteq R)$. 
    
    We did not succeed in proving that every $R$ in $\mathcal{E}$ extends to a transitive $T$ in $\mathcal{E}$. 
   
   \begin{definition}\label{D:shiftinv}  A binary relation $R$ on $\mathcal{N}$ is \emph{shift-invariant} if and only if \\$\forall \alpha\forall \beta[\alpha R \beta \leftrightarrow (\alpha\circ S) R (\beta\circ S)]$.

   \end{definition}
   
   \begin{lemma}\label{L:shiftinvariant} Every $R$ in $\mathcal{E}$ is shift-invariant. \end{lemma}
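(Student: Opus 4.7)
The plan is to prove shift-invariance by induction on the inductive generation of $\mathcal{E}$, handling the three clauses of Definition \ref{D:perhapsiveextvitali} separately.

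For the base case, I would verify directly from the definition that $\sim_V$ is shift-invariant. If $\alpha \sim_V \beta$, pick $n$ with $\forall m>n[\alpha(m)=\beta(m)]$; then $\forall m > n[(\alpha\circ S)(m) = \alpha(m+1) = \beta(m+1) = (\beta\circ S)(m)]$, so $(\alpha\circ S)\sim_V(\beta\circ S)$. Conversely, if $(\alpha\circ S)\sim_V(\beta\circ S)$ witnessed by $n$, then $\forall m > n+1[\alpha(m)=(\alpha\circ S)(m-1)=(\beta\circ S)(m-1)=\beta(m)]$.

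For the step closing under $R \mapsto R^+$, assume $R$ is shift-invariant. If $\alpha R^+ \beta$, pick $n$ witnessing $\forall m>n[\alpha(m)\neq\beta(m)\to \alpha R \beta]$. Then for any $m>n$, if $(\alpha\circ S)(m)\neq(\beta\circ S)(m)$ we have $\alpha(m+1)\neq\beta(m+1)$ with $m+1>n$, so $\alpha R\beta$, hence $(\alpha\circ S) R (\beta\circ S)$ by the inductive hypothesis; thus $(\alpha\circ S) R^+ (\beta\circ S)$ with the same $n$. For the reverse direction, if $(\alpha\circ S) R^+ (\beta\circ S)$ is witnessed by $n$, take $n':=n+1$: for $m>n'$ with $\alpha(m)\neq\beta(m)$, $m-1>n$ and $(\alpha\circ S)(m-1)\neq(\beta\circ S)(m-1)$, so $(\alpha\circ S) R (\beta\circ S)$, whence $\alpha R \beta$ by shift-invariance of $R$.

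For the step closing under countable union, if each $R_i$ is shift-invariant then $\alpha\bigl(\bigcup_i R_i\bigr)\beta$ iff $\exists i[\alpha R_i\beta]$ iff $\exists i[(\alpha\circ S) R_i (\beta\circ S)]$ iff $(\alpha\circ S)\bigl(\bigcup_i R_i\bigr)(\beta\circ S)$, which is immediate from the induction hypothesis applied termwise.

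The only subtlety I anticipate is bookkeeping with the shifted index in the $R^+$ clause (one has to use $n'=n+1$ in the backward direction so that $m>n'$ converts to $m-1>n$); no continuity principle or choice is invoked, since the inductive definition of $\mathcal{E}$ is the only ingredient needed.
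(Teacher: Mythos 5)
Your proof is correct and follows exactly the route the paper takes: induction on the generating clauses of $\mathcal{E}$, checking that $\sim_V$ is shift-invariant, that shift-invariance is preserved by $R\mapsto R^+$, and that it is preserved by countable unions. The paper merely states these three facts without detail, while you supply the index bookkeeping (including the correct use of $n+1$ in the backward direction of the $R^+$ step), so nothing further is needed.
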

   
   \begin{proof} The proof is a straightforward exercise in induction on $\mathcal{E}$. Note:
   
   (I) $\sim_V$ is shift-invariant.
   
   (II) For every binary relation $R$ on $\mathcal{N}$, if $R$ is shift-invariant, then $R^+$ is shift-invariant.
   
   (III) For every infinite sequence $R_0, R_1, \ldots$ of binary relations on $\mathcal{N}$, if each $R_n$ is shift-invariant, then $\bigcup_iR_i$ is shift-invariant.
   
   Conclude: every $R$ in $\mathcal{E}$ is shift-invariant. \end{proof}
   \begin{definition}\label{D:Estar} We let $\mathcal{E}^\ast$ be the least class of binary relations on $\mathcal{N}$ such that \begin{enumerate}[\upshape (i)] \item the Vitali equivalence relation $\sim_V$ belongs to $\mathcal{E}^\ast$, and \item for every infinite sequence $R_0, R_1, \ldots $ of elements of $\mathcal{E}^\ast$, also $(\bigcup_i R_i)^+\in \mathcal{E}^\ast.$\end{enumerate}\end{definition}
   
   \begin{lemma}\label{L:eestar} $\mathcal{E}^\ast\subseteq \mathcal{E}$ and, for all $R$ in $\mathcal{E}$, there exists $T$ in $\mathcal{E}^\ast$ such that $R\subseteq T$. \end{lemma}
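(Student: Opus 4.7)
The plan is to prove both containments by induction on the inductive definitions of $\mathcal{E}^\ast$ and $\mathcal{E}$ respectively.

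For $\mathcal{E}^\ast \subseteq \mathcal{E}$, I would argue by induction on the generation of $\mathcal{E}^\ast$. The base case $\sim_V \in \mathcal{E}$ holds by clause (i) of Definition \ref{D:perhapsiveextvitali}. For the induction step, suppose $R_0, R_1, \ldots \in \mathcal{E}^\ast$ are such that each $R_i \in \mathcal{E}$ (by the inductive hypothesis). Then by clause (iii) of Definition \ref{D:perhapsiveextvitali}, $\bigcup_i R_i \in \mathcal{E}$, and then by clause (ii), $(\bigcup_i R_i)^+ \in \mathcal{E}$. This is the only place $\mathcal{E}^\ast$ can grow, so we are done with the first half.

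For the second half, I would argue by induction on the generation of $\mathcal{E}$, handling the three clauses in Definition \ref{D:perhapsiveextvitali}. The base case $R = \sim_V$ is immediate: take $T := \sim_V \in \mathcal{E}^\ast$. For the clause $R = S^+$, by the inductive hypothesis we have some $T' \in \mathcal{E}^\ast$ with $S \subseteq T'$; a direct check of the definition of $(\cdot)^+$ shows that $S \subseteq T'$ implies $S^+ \subseteq (T')^+$, and $(T')^+$ lies in $\mathcal{E}^\ast$ by applying clause (ii) of Definition \ref{D:Estar} to the constant sequence $T', T', \ldots$ (whose union is $T'$ itself). For the clause $R = \bigcup_i R_i$, the inductive hypothesis gives, for each $i$, some $T_i \in \mathcal{E}^\ast$ with $R_i \subseteq T_i$; invoking Axiom \ref{ax:countable choice}, I collect these $T_i$ into an infinite sequence. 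Then $\bigcup_i R_i \subseteq \bigcup_i T_i$, and the elementary observation that $Q \subseteq Q^+$ for any binary relation $Q$ (take $n = 0$: if $\alpha Q \beta$, then trivially $\forall m > 0[\alpha(m) \neq \beta(m) \rightarrow \alpha Q \beta]$) yields $\bigcup_i T_i \subseteq (\bigcup_i T_i)^+$. The latter relation lies in $\mathcal{E}^\ast$ by clause (ii) of Definition \ref{D:Estar}, so $T := (\bigcup_i T_i)^+$ is the required witness.

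The main subtleties are two. First, $\mathcal{E}^\ast$ is \emph{not} stipulated to be closed under countable unions as such—only under the operation $(R_0, R_1, \ldots) \mapsto (\bigcup_i R_i)^+$. The inclusion $Q \subseteq Q^+$ is precisely what lets us absorb a mere union into an $\mathcal{E}^\ast$-member, by paying the small price of applying $(\cdot)^+$ on top. Second, the union clause of the induction requires the uniform selection of witnesses $T_i$, which is where I use the Axiom of Countable Choice, one of the axioms the author has explicitly licensed in the introduction. No obstacle of substance beyond these bookkeeping points is expected.
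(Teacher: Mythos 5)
Your proof is correct and follows exactly the route the paper indicates (induction on the generation of $\mathcal{E}^\ast$ and of $\mathcal{E}$, respectively); the paper merely declares both inductions straightforward, and your write-up supplies the right details, in particular the monotonicity of $(\cdot)^+$, the trick of feeding a constant sequence into clause (ii) of Definition \ref{D:Estar}, the inclusion $Q\subseteq Q^+$, and the appeal to countable choice in the union clause, which matches the paper's own practice elsewhere (see the footnote in the proof of Theorem \ref{T:vitalistable}(iv)).
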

   \begin{proof} The proofs of the two statements are straightforward, by induction on $\mathcal{E}^\ast$ and $\mathcal{E}$, respectively. \end{proof}
   \begin{theorem}\label{T:hierarchy} For each $R$ in $\mathcal{E}^\ast$, $R\subseteq R^+$ and $\neg(R^+\subseteq R)$. \end{theorem}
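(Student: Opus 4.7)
My plan is that $R\subseteq R^+$ is immediate (given $\alpha R\beta$, the conclusion of the defining implication of $R^+$ already holds, so $n=0$ works), while $\neg(R^+\subseteq R)$ will be proved by induction on the inductive definition of $\mathcal{E}^\ast$. I strengthen the inductive claim to: for each $R\in\mathcal{E}^\ast$ there is a spread $\mathcal{F}_R\ni\underline 0$ with (a) $\forall\alpha\in\mathcal{F}_R[\alpha R^+\underline 0]$ and (b) $\neg\forall\alpha\in\mathcal{F}_R[\alpha R\underline 0]$; this at once implies $\neg(R^+\subseteq R)$.

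For the base case $R=\sim_V$ I take $\mathcal{F}_R:=\mathcal{F}^{\underline 0}_1$. Any $\alpha\in\mathcal{F}^{\underline 0}_1$ with a nonzero entry at $m$ has $m$ as its unique nonzero position, so $\alpha\sim_V\underline 0$ with witness $m$; thus (a) holds with $n=0$, and (b) is exactly the $i=0$ instance of Theorem \ref{T:vitalivariations}(iii), proved there by Lemma \ref{L:bcpspreads} applied at $\alpha=\underline 0$.

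For the inductive step $R=(\bigcup_i R_i)^+$ I prepend $\sim_V$ to the sequence (leaving the union $\bigcup_i R_i$ unchanged, so that I may assume $R_0=\sim_V$ and $\mathcal{F}_{R_0}=\mathcal{F}^{\underline 0}_1$), and define $\mathcal{F}_R$ as the spread of $\alpha$ such that whenever $k$ is the first position with $\alpha(k)\neq 0$, writing $\alpha(k)=i+1$, the tail $\alpha\circ S^{k+1}$ belongs to $\mathcal{F}_{R_i}$. For (a), the first nonzero $k\le m$ is located by finite search for any $m$ with $\alpha(m)\neq 0$, and the inductive version of (a) applied to $\alpha\circ S^{k+1}\in\mathcal{F}_{R_i}$, combined with $R_i^+\subseteq R$ and shift-invariance (Lemma \ref{L:shiftinvariant}), yields $\alpha R^+\underline 0$. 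For (b), if $\forall\alpha\in\mathcal{F}_R[\alpha R\underline 0]$, then for each $i_0$ the map $\alpha'\mapsto\langle i_0+1\rangle\ast\alpha'$ embeds $\mathcal{F}_{R_{i_0}}$ into $\mathcal{F}_R$ and shift-invariance yields $\forall\alpha'\in\mathcal{F}_{R_{i_0}}[\alpha' R\underline 0]$. Unfolding $R$ produces $\forall\alpha'\in\mathcal{F}_{R_{i_0}}\exists n\forall m>n[\alpha'(m)\neq 0\rightarrow\exists j[\alpha'R_j\underline 0]]$, and two successive applications of Lemma \ref{L:bcpspreads}, first to extract a uniform $p,n$ around $\alpha'=\underline 0$ and then, after passing to elements whose first nonzero is at a position $m_0>\max(p,n)$ of value $j+1$, to extract a specific $j^*\in\omega$, combined with shift-invariance, yield $\forall\delta\in\mathcal{F}_{R_{j^*}}[\delta R_{j^*}\underline 0]$, contradicting the inductive version of (b) for $R_{j^*}$.

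The chief obstacle is the alignment in the final step: the sub-spread produced after the nested continuity applications and shift-invariance must genuinely coincide with some $\mathcal{F}_{R_{j^*}}$ from the inductive construction, which forces the spread-law of $\mathcal{F}_R$ to expose, at prefixes of the form $\overline{\underline 0}m_0\ast\langle j^*+1\rangle$, precisely the tails prescribed by $\mathcal{F}_{R_{j^*}}$; arranging this demands careful bookkeeping of the sequence $(R_i)$ so that every relevant $R_j$ from the inductive hypothesis actually occurs in it.
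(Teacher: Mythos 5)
Your overall strategy is the paper's: strengthen the claim to the existence, for each $R\in\mathcal{E}^\ast$, of a fan $\mathcal{F}_R$ with $\mathcal{F}_R\subseteq\{\alpha\mid\alpha R^+\underline 0\}$ and $\neg(\mathcal{F}_R\subseteq\{\alpha\mid\alpha R\underline 0\})$, prove it by induction on $\mathcal{E}^\ast$ with the ``at most one nonzero entry'' fan as base case and a sum of the fans $\mathcal{F}_{R_i}$ grafted after the first nonzero position in the inductive step, and use shift-invariance plus Lemma \ref{L:bcpspreads} throughout. The base case, the positive halves, and the reduction of $\neg(R^+\subseteq R)$ to the strengthened claim are all fine.

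The gap is exactly at the place you flag as ``the chief obstacle'', and your construction does not close it. After the first continuity application you restrict to elements of $\mathcal{F}_R$ whose first nonzero entry sits at a fixed position $m_0>\max(p,n)$; on these, $\exists j[\alpha R_j\underline 0]$ holds. A second application of Lemma \ref{L:bcpspreads} then yields a \emph{finite initial segment} $u$ together with one index $j^\ast$ such that all elements of the sub-spread extending $u$ satisfy $\alpha R_{j^\ast}\underline 0$. But with your value-indexed grafting, any such $u$ long enough to be informative already fixes the value $j_0+1$ at position $m_0$ and hence confines you to a basic-open piece of the single copy $\overline{\underline 0}m_0\ast\langle j_0+1\rangle\ast\mathcal{F}_{R_{j_0}}$, while the index $j^\ast$ handed to you by continuity need not equal $j_0$. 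What you obtain is therefore only ``some neighbourhood inside $\mathcal{F}_{R_{j_0}}$ is contained in $Fin_{R_{j^\ast}}$'', which contradicts neither $\neg(\mathcal{F}_{R_{j_0}}\subseteq Fin_{R_{j_0}})$ nor $\neg(\mathcal{F}_{R_{j^\ast}}\subseteq Fin_{R_{j^\ast}})$; you cannot reach a copy of $\mathcal{F}_{R_{j^\ast}}$ without changing the value at $m_0$, i.e.\ without leaving the neighbourhood $u$. This is precisely why the paper indexes the grafted fans by \emph{position} through the pairing function (a copy of $\mathcal{F}_{n'}$ attached at depth $n$): since $\{n\mid n'=i\}$ is unbounded, a full shifted copy of $\mathcal{F}_i$ for the \emph{specific} $i$ produced by the second continuity application can still be found inside the neighbourhood $\overline{\underline 0}q\ast\langle 1\rangle\ast\overline{\underline 0}r$, and shift-invariance then gives $\mathcal{F}_i\subseteq Fin_{R_i}$, contradicting the induction hypothesis for the right index. (A smaller slip: you run the two continuity applications over $\mathcal{F}_{R_{i_0}}$, but the elements ``with first nonzero at $m_0$ of value $j+1$'' live in $\mathcal{F}_R$, not in the inductively given $\mathcal{F}_{R_{i_0}}$, whose internal structure you do not control.) To repair your argument you must replace the value-indexed graft by a position-indexed one, so that every $\mathcal{F}_{R_i}$ recurs cofinally in every neighbourhood of the point at which the second continuity application is made.
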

   
   \begin{proof}

   \medskip For each $R$ in $\mathcal{E}$, we define $Fin_R:=\{\alpha\mid \alpha R\underline 0\}$.
   \footnote{In \cite{veldman1995},  $\mathcal{X}\subseteq \mathcal{N}$ is called  a \textit{notion of finiteness} if $\mathbf{Fin}\subseteq \mathcal{X}\subseteq \mathbf{Fin}^{\neg\neg}$. For every $R$ in $\mathcal{E}$, $Fin_R$ is a notion a finiteness.}
  
  We prove  for each $R$ in $\mathcal{E}^\ast$  there exists a fan $\mathcal{F}$ such that $\mathcal{F}\subseteq Fin_{R^+}$ and $\neg(\mathcal{F}\subseteq Fin_R)$.
  
  We do so by induction on $\mathcal{E}^\ast$. 
  
  \smallskip (I) Define $\mathcal{F}:=\{\alpha\mid\forall m\forall n[(\alpha(m)\neq 0 \;\wedge\;\alpha(n)\neq 0)\rightarrow m=n]\}$. 
  
  Note that $\mathcal{F}$ is a fan. 
  
  \smallskip
  For each $\alpha$ in $\mathcal{F}$, for each $n$, if $\alpha(n)\neq 0$ then: $\forall m>n[\alpha(m)=0]$ and $\alpha \in Fin_{\sim_V}$. Conclude: for each $\alpha \in \mathcal{F}$, if $\exists n[\alpha(n)\neq 0]$, then $\alpha\in Fin_{\sim_V}$, that is: $\alpha \in Fin_{(\sim_V)^+}$. Conclude: $\mathcal{F}\subseteq Fin_{(\sim_V)^+}$. 
  
  \smallskip Now assume $\mathcal{F}\subseteq Fin_{\sim_V}$, that is: $\forall \alpha \in \mathcal{F}\exists n\forall m>n[\alpha(m)=0]$.  Using Lemma \ref{L:bcpspreads}, find $p,n$ such that $\forall \alpha \in \mathcal{F}[\underline{\overline 0}p\sqsubset \alpha\rightarrow \forall m>n[\alpha(m)=0]]$. 
  \\Define $q:=\max(p, n+1)$ and consider $\alpha:=\underline{\overline 0}q\ast\langle 1 \rangle\ast \underline 0$.
   Contradiction.  
  
  Conclude: $\neg(\mathcal{F}\subseteq Fin_{\sim_V})$.

 \medskip (II) Let $R_0, R_1, \ldots$ be an infinite sequence of elements of $\mathcal{E}$.  
  
  Let $\mathcal{F}_0, \mathcal{F}_1,\ldots $ be an infinite sequence of fans such that,\\ for each $n$, $\mathcal{F}_n\subseteq Fin_{(R_n)^+}$ and $\neg(\mathcal{F}_n\subseteq Fin_{ R_n})$. 
  
  Consider $R:=(\bigcup_i R_i)^+$. 
  
  Define $\mathcal{F}:=\{\alpha\mid \forall n[n=\mu i[\alpha(i)\neq 0]\rightarrow \exists \beta \in \mathcal{F}_{n'}[\alpha=\overline\alpha(n+1)\ast \beta]\}$.\footnote{For each $n$, $n=(n',n'')$, see Section \ref{S:notations}.}
  
  Note that $\mathcal{F}$ is a fan. 
  
  We now prove: $\mathcal{F}\subseteq Fin_{R^+}$ and $\neg(\mathcal{F}\subseteq Fin_R)$. 
  
   \smallskip Note that, for each  $\alpha \in \mathcal{F}$, for each $n$, if $n=\mu i[\alpha(i)\neq 0]$, then there exists $\beta$ in $\mathcal{F}_{n'}$ such that $\alpha=\overline\alpha (n+1)\ast\beta$.

  As, for each $n$, $\mathcal{F}_n\subseteq Fin_{(R_n)^+}\subseteq Fin_{\bigcup_i (R_i)^+}$, and $\bigcup_i(R_i)^+\subseteq \bigl(\bigcup_i R_i\bigr)^+=R$ and $R$ is shift-invariant,  conclude: $\forall\alpha\in\mathcal{F}[\exists n[\alpha(n)\neq 0]\rightarrow \alpha \in Fin_R]$, that is: $\mathcal{F}\subseteq Fin_{R^+}$. 
  
  \smallskip Now assume $\mathcal{F}\subseteq Fin_R$, that is: $\forall \alpha \in \mathcal{F}\exists n\forall m>n[\alpha(m)\neq 0]\rightarrow \exists i[\alpha \in Fin_{R_i}]]$.  Using Lemma \ref{L:bcpspreads},  find $p,n$ such that \\$\forall \alpha \in \mathcal{F}[\overline{\underline 0} p\sqsubset \alpha \rightarrow \forall m >n[\alpha(m)\neq 0\rightarrow \exists i[\alpha \in Fin_{R_i}]]$. 
  
  Define $q:=\max(p, n+1)$ and note:
  $\forall \alpha \in \mathcal{F}[\overline{\underline 0}q\ast\langle 1\rangle\sqsubset \alpha\rightarrow \exists i[\alpha\in \mathcal{F}_i]]$.   
  
  Using Lemma \ref{L:bcpspreads} again, find $r,i$ such that $\forall\alpha \in \mathcal{F}[\overline{\underline 0}q\ast\langle 1 \rangle\ast\overline{\underline 0}r\sqsubset \alpha\rightarrow \alpha \in \mathcal{F}_i]$.

  Find $n\ge q+r+1$ such that $n' =i$ and define $t:=n-(q+1)$.
  
  Note: $t\ge r$ and conclude: $\forall \beta \in \mathcal{F}_i[\overline{\underline 0}q \ast \langle 1\rangle\ast \overline{\underline 0}t\ast\langle 1 \rangle\ast \beta \in Fin_{R_i}]$. 
  
  As $R_i$ is shift-invariant,  conclude: $\mathcal{F}_i\subseteq Fin_{R_i}$. 
  
  Contradiction, as $\neg(\mathcal{F}_i\subseteq Fin_{R_i})$.
  
  Conclude: $\neg(\mathcal{F} \subseteq Fin_{R})$. 
  \end{proof}
  
  \begin{corollary}\label{C:hierarchy} For each $R$ in $\mathcal{E}$, $R\subseteq R^+$ and $\neg(R^+\subseteq R)$. \end{corollary}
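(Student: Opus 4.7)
The first containment $R\subseteq R^+$ is immediate from the definition of $R^+$ and holds for any binary relation: if $\alpha R\beta$, then $\forall m>0[\alpha(m)\neq\beta(m)\rightarrow \alpha R\beta]$ holds trivially (the consequent is already granted), so $\alpha R^+\beta$.

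For $\neg(R^+\subseteq R)$ my plan is to argue by contradiction and then reduce to Theorem \ref{T:hierarchy} via Lemma \ref{L:eestar}. Assume $R^+\subseteq R$; combined with the first part this yields $R=R^+$. The key observation I want to exploit is that this equation makes $R$ itself closed under the generating operations of $\mathcal{E}^\ast$, so every element of $\mathcal{E}^\ast$ must be contained in $R$.

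To make this precise I would first record, by a straightforward induction on $\mathcal{E}$, the auxiliary fact that $\sim_V\subseteq R'$ for every $R'\in\mathcal{E}$ (immediate for $\sim_V$; preserved under $S\mapsto S^+$ since $S\subseteq S^+$; preserved under countable unions since $R_0\subseteq \bigcup_i R_i$). Then I would prove by induction on $\mathcal{E}^\ast$, using Definition \ref{D:Estar}, the claim that every $T\in\mathcal{E}^\ast$ satisfies $T\subseteq R$. The base case $T=\sim_V$ follows from $\sim_V\subseteq R$, and if $T=(\bigcup_i S_i)^+$ where each $S_i\in\mathcal{E}^\ast$ satisfies $S_i\subseteq R$ by the inductive hypothesis, then $\bigcup_i S_i\subseteq R$, and applying ${}^+$ together with $R^+=R$ gives $T=(\bigcup_i S_i)^+\subseteq R^+=R$.

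To finish, I would invoke Lemma \ref{L:eestar} to produce some $T\in\mathcal{E}^\ast$ with $R\subseteq T$; the inductive claim just established gives the reverse inclusion $T\subseteq R$, hence $T=R$. Then $T^+=R^+=R=T$, so $T^+\subseteq T$, contradicting the conclusion $\neg(T^+\subseteq T)$ of Theorem \ref{T:hierarchy} applied to $T\in\mathcal{E}^\ast$. The only real obstacle is spotting the right formulation: once one sees that the hypothesis $R=R^+$ turns $R$ into a common supset of every $T\in\mathcal{E}^\ast$, the rest is a short reduction to the already-proved Theorem \ref{T:hierarchy}, so no new fan construction or continuity argument is required beyond those already in Sections 11--12.
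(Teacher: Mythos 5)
Your proposal is correct and follows essentially the same route as the paper: assume $R^+\subseteq R$, hence $R=R^+$, show by induction that $R$ contains the relevant relations, invoke Lemma \ref{L:eestar} to find $T\in\mathcal{E}^\ast$ with $R\subseteq T$, and derive a contradiction with Theorem \ref{T:hierarchy}. The only cosmetic difference is that the paper inducts over all of $\mathcal{E}$ to get $U\subseteq R$ for every $U\in\mathcal{E}$ and then concludes $T^+\subseteq R\subseteq T$ directly, whereas you induct over $\mathcal{E}^\ast$ and conclude $T=R$; both reach the same contradiction.
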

  
  \begin{proof} Assume we find $R$ in $\mathcal{E}$ such that $R=R^+$. 
  
  Conclude, by induction on $\mathcal{E}$: for all $U$ in $\mathcal{E}$, $U\subseteq R$.
  
  Using Lemma \ref{L:eestar},  find $T$ in $\mathcal{E}^\ast$ such that $R\subseteq T$. 
  
  By Theorem \ref{T:hierarchy}, $T\subseteq T^+$ and $\neg(T^+\subseteq T)$.
  
  On the other  hand, $T^+\subseteq R\subseteq T$. 
  
  Contradiction. \end{proof}

   \begin{definition}\label{D:almost} We define binary relations $\sim_V^{\neg\neg}$ and $\sim_V^{almost}$ on $\mathcal{N}$, as follows.

  For all $\alpha, \beta$, $\alpha\sim_V^{\neg\neg}\beta \leftrightarrow \neg\neg\exists n\forall m>n[\alpha(n)=\beta(n)]\leftrightarrow \neg\neg (\alpha\sim_V\beta)$, and 
  
   $\alpha \sim^{almost}_V\beta \leftrightarrow \forall \zeta \in [\omega]^\omega\exists n[\alpha\circ \zeta(n)=\beta\circ\zeta(n)]$. \end{definition}
   
   $\alpha\sim_V^{almost}\beta$ if and only if the set $\{n\mid \alpha(n)\neq \beta(n)\}$ is \textit{almost$^\ast$-finite} in the sense used in \cite[Section 0.8.2]{veldman2005}.
   
   \smallskip
   The following  axiom is a form of Brouwer's famous \textit{Thesis on bars in $\mathcal{N}$}, see \cite{veldman2006b}. 
    \begin{axiom}[The Principle of Bar Induction]\label{A:barinduction} $\;$

  For all $B,C\subseteq \mathbb{N}$, if $\forall \alpha \exists n[\overline \alpha n \in B]$ and $B\subseteq C$ and $\forall s[s\in C\leftrightarrow \forall n[s\ast\langle n\rangle \in C]]$, then $\langle \; \rangle \in C$,  
  
  or, equivalently,
  
  for all $B,C\subseteq [\omega]^{<\omega}$, if $\forall \zeta \in [\omega]^\omega \exists n[\overline \zeta n \in B]$ and $B\subseteq C$ and \\$\forall s\in [\omega]^{<\omega}[s\in C\leftrightarrow \forall n[s\ast\langle n\rangle \in [\omega]^{<\omega}\rightarrow s\ast\langle n\rangle\in C]]$, then $\langle \; \rangle \in C$. \end{axiom}
  
   \begin{theorem}\label{T:vitalistable}$\:$
   
   \begin{enumerate}[\upshape (i)]\item $\sim_V^{\neg\neg}$ and $\sim_V^{almost}$ are equivalence relations on $\mathcal{N}$.
   \item For all $R$ in $\mathcal{E}$, $\sim_V\;\subseteq R\subseteq \;\sim_V^{\neg\neg}$.   \item For all $R$ in $\mathcal{E}$, $R\;\subseteq\; \sim^{almost}_V $. \item  $\forall\alpha\forall\beta[ \alpha\sim_V^{almost}\beta\rightarrow\exists R\in\mathcal{E}[\alpha R\; \beta]$. \item $\forall \alpha\forall \beta[\alpha\sim_V^{almost}\beta\rightarrow \alpha \sim_V^{\neg\neg}\beta]$. \end{enumerate} \end{theorem}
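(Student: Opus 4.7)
For (i), reflexivity and symmetry of both $\sim_V^{\neg\neg}$ and $\sim_V^{almost}$ are immediate. Transitivity of $\sim_V^{\neg\neg}$ follows from the transitivity of $\sim_V$ together with the fact that $\neg\neg$ distributes over conjunction: from $\neg\neg(\alpha\sim_V\beta)$ and $\neg\neg(\beta\sim_V\gamma)$ one obtains $\neg\neg(\alpha\sim_V\beta\wedge\beta\sim_V\gamma)$ and hence $\neg\neg(\alpha\sim_V\gamma)$. Transitivity of $\sim_V^{almost}$ is the interesting case: given $\alpha\sim_V^{almost}\beta$, $\beta\sim_V^{almost}\gamma$, and $\zeta\in[\omega]^\omega$, the plan is to use Axiom \ref{ax:countable choice} to construct a strictly increasing $\psi$ such that $\alpha\circ\zeta\circ\psi$ and $\beta\circ\zeta\circ\psi$ agree pointwise (at stage $k+1$ apply $\alpha\sim_V^{almost}\beta$ to the tail of $\zeta$ past position $\psi(k)$), then apply $\beta\sim_V^{almost}\gamma$ to $\zeta\circ\psi$ to get $n$ with $\beta(\zeta(\psi(n)))=\gamma(\zeta(\psi(n)))$; this same position witnesses $\alpha(\zeta(\psi(n)))=\gamma(\zeta(\psi(n)))$.

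For (ii) and (iii) I would run straightforward inductions on $\mathcal{E}$. For $\sim_V\subseteq R$, the step for $R^+$ is immediate, since if $\alpha\sim_V\beta$ then $\alpha R\beta$ by the hypothesis, so the body of $R^+$ is vacuously satisfied. For $R\subseteq\sim_V^{\neg\neg}$, given $\alpha R^+\beta$ and assuming $\neg(\alpha\sim_V\beta)$, find $n$ with $\forall m>n[\alpha(m)\neq\beta(m)\to\alpha R\beta]$; the induction hypothesis together with $\neg(\alpha\sim_V\beta)$ forces $\neg(\alpha R\beta)$, so $\forall m>n[\alpha(m)=\beta(m)]$ (since $\neq$ on $\omega$ is stable), contradicting $\neg(\alpha\sim_V\beta)$. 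For (iii), given $\alpha R^+\beta$ and $\zeta$, find $k_0$ with $\zeta(k_0)>n$, decide $\alpha(\zeta(k_0))=\beta(\zeta(k_0))$; if equal we are done, otherwise $\alpha R\beta$ and the induction hypothesis applied to the tail of $\zeta$ past $k_0$ finishes the job. Countable unions are trivial in both cases.

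The hard part is (iv), where I would apply Axiom \ref{A:barinduction}. Fix $\alpha,\beta$ with $\alpha\sim_V^{almost}\beta$ and set
\[ B := \{s\in[\omega]^{<\omega} \mid \exists i<|s|[\alpha(s(i))=\beta(s(i))]\}, \]
which is a bar by hypothesis. Define
\[ C := \{s\in[\omega]^{<\omega} \mid \forall i<|s|[\alpha(s(i))\neq\beta(s(i))]\to\exists R\in\mathcal{E}[\alpha R\beta]\}. \]
Then $B\subseteq C$ holds vacuously, since for $s\in B$ the antecedent is outright refutable (equality on $\omega$ is decidable). The $\to$ direction of the monotonicity $\leftrightarrow$ is immediate; the crucial $\leftarrow$ is the inductive step. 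Assume $s\ast\langle n\rangle\in C$ for every valid $n>\max(s)$ and assume the antecedent for $s$. For each such $n$, either $\alpha(n)=\beta(n)$ (set $R_n:=\,\sim_V$), or $\alpha(n)\neq\beta(n)$, in which case the antecedent at $s\ast\langle n\rangle$ is satisfied, yielding some $R_n\in\mathcal{E}$ with $\alpha R_n\beta$. Coding members of $\mathcal{E}^\ast\subseteq\mathcal{E}$ from Definition \ref{D:Estar} as elements of $\mathcal{N}$ (by well-founded trees, since $\mathcal{E}^\ast$ is inductively generated by a single clause), Axiom \ref{ax:countable choice} delivers a single sequence $(R_n)_{n>\max(s)}$ with the desired property. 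Set $R:=(\bigcup_{n>\max(s)}R_n)^+\in\mathcal{E}$; then $N:=\max(s)$ witnesses $\alpha R\beta$, since any $m>N$ with $\alpha(m)\neq\beta(m)$ gives $\alpha R_m\beta$ and hence $\alpha(\bigcup_n R_n)\beta$. So $s\in C$. Bar induction gives $\langle\rangle\in C$, whose vacuous antecedent delivers $\exists R\in\mathcal{E}[\alpha R\beta]$.

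Finally, (v) is immediate from (iv) and (ii): given $\alpha\sim_V^{almost}\beta$, apply (iv) to get $R\in\mathcal{E}$ with $\alpha R\beta$, then (ii) gives $R\subseteq\sim_V^{\neg\neg}$, hence $\alpha\sim_V^{\neg\neg}\beta$. The main technical point I expect to require care is the concrete coding of members of $\mathcal{E}^\ast$ by elements of $\mathcal{N}$ needed to invoke countable choice inside the bar-inductive step of (iv); the inductive definition of $\mathcal{E}^\ast$ makes this available, but writing it out cleanly is where most of the bookkeeping lives.
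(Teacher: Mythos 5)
Your proposal is correct and, for parts (i)--(iv), follows essentially the same route as the paper: the iterated subsequence extraction for transitivity of $\sim_V^{almost}$, induction on $\mathcal{E}$ for (ii) and (iii) with the same case split on $\exists m>n[\alpha(m)\neq\beta(m)]$, and bar induction with the same monotone class $C$ and the same countable-choice selection of relations $R_n$ for (iv). (Your contrapositive phrasing of the $R^+$ step in (ii) is intuitionistically harmless since the conclusion $\neg\neg(\alpha\sim_V\beta)$ is negative, and your remark that the coding needed for countable choice is cleanest over $\mathcal{E}^\ast$ matches the paper's footnote, modulo an appeal to Lemma \ref{L:eestar} to pass from $\exists R\in\mathcal{E}$ to $\exists T\in\mathcal{E}^\ast$.) The one genuine divergence is (v): you obtain it as an immediate corollary of (iv) and (ii), whereas the paper gives a second, self-contained bar induction proving $\neg\neg\exists p\forall n>p[\alpha(n)=\beta(n)]$ directly. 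Your route is shorter and avoids duplicating the bar-induction machinery, but it makes (v) inherit the countable-choice assumption hidden in (iv); the paper's direct argument for (v) uses only Axiom \ref{A:barinduction} and no choice, which is presumably why it is given separately. Both are valid proofs of the statement as formulated.
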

  
  \begin{proof} 
  
   (i) One easily proves that $\sim_V^{\neg\neg}$ is an equivalence relation. One needs the fact that, for all propositions $P,Q$, $(\neg\neg P\;\wedge\;\neg\neg Q)\rightarrow \neg\neg(P\;\wedge\; Q)$.
   
   \smallskip We prove that $\sim_V^{almost}$ is a transitive relation.
  
  Let $\alpha, \beta, \gamma$ be given such that $\alpha \sim_V^{almost}\beta$ and $\beta\sim_V^{almost} \gamma$.
  
  Let $\zeta$ in $[\omega]^\omega$ be given. Find $\eta$ in $[\omega]^\omega$ such that $\forall n[\alpha\circ\zeta\circ\eta(n)=\beta\circ\zeta\circ\eta(n)]$. Find $p$ such that $\beta\circ\zeta\circ\eta(p)=\gamma\circ\zeta\circ\eta(p)$. Define $n:=\eta(p)$ and note: $\alpha\circ\zeta(n)=\gamma\circ\zeta(n)$. 
  
  We thus see: $\forall \zeta\in [\omega]^\omega\exists n[\alpha\circ\zeta(n)=\gamma\circ\zeta(n)]$, that is: $\alpha \sim^{almost}_V\gamma$.

 \smallskip (ii) The proof is by (transfinite) induction on $\mathcal{E}$. We only prove: for all $R$ in $\mathcal{E}$, $R\subseteq \;\sim_V^{\neg\neg}$ as the statement: for all $R$ in $\mathcal{E}$, $\sim_V\;\subseteq R$ is very easy to prove. 
  
  \smallskip
  
 (I) Our starting point is the trivial observation: $\forall\alpha\forall\beta[\alpha \sim_V\beta\rightarrow \neg\neg (\alpha\sim_V \beta)]$.

  \smallskip
 (II) Now let $R$ in $\mathcal{E}$ be given such that $\forall\alpha\forall\beta[\alpha R\beta\rightarrow \neg\neg(\alpha\sim_V \beta)]$. 
  
  We have to prove: $\forall\alpha\forall\beta[\alpha R^+\beta\rightarrow \neg\neg(\alpha\sim_V \beta)]$.  
  
  We do so as follows.
  
  Let $\alpha, \beta$ be given such that $\alpha R^+\beta$. \\Find $n$ such that $\forall m>n[\alpha(m)\neq \beta(m) \rightarrow \alpha R\beta]$ and consider two special cases.
  
  \textit{Case (1):} $\exists m>n[\alpha(m)\neq \beta(m)$. Then $\alpha R\;\beta$, and, therefore: $\neg\neg(\alpha\sim_V\beta)$.
  
  \textit{Case (2):} $\neg\exists m>n[\alpha(m) \neq \beta(m)$. Then $\forall m>n[\alpha(m)=\beta(m)]$ and $\alpha\sim_V\beta$.
  
  In both cases, we find: $\neg\neg(\alpha\sim_V\beta)$.
  
  Conclude\footnote{using the scheme: if $P\rightarrow Q$ and $\neg P\rightarrow Q$, then $\neg\neg Q$.}: $\neg\neg(\alpha\sim_V\beta)$.

(III)  Now let $R_0, R_1, \ldots$ be an infinite sequence of elements of $\mathcal{E}$ such that, for all $n$, $\forall \alpha\forall\beta[\alpha R_n\beta \rightarrow \neg\neg(\alpha\sim_V\beta)]$. 

Define $R:=\bigcup_n R_n$ and note: $\forall \alpha\forall\beta[\alpha R\beta \rightarrow \neg\neg(\alpha\sim_V\beta)]$. 

 \smallskip (iii) The proof is by (transfinite) induction on $\mathcal{E}$. 
  
  \smallskip
  
 (I) Our starting point is the observation: $\forall\alpha\forall\beta[a\sim^0_V\beta\rightarrow \alpha\sim^{almost}_V \beta]$. \\We prove this as follows:
  
  Let $\alpha,\beta$ be given such that $\alpha\sim^0_V \beta$. Find $n$ such that $\forall m>n[\alpha(m)=\beta(m)]$. Note: $\forall \zeta \in [\omega]^\omega][\zeta(n+1)>n \;\wedge\;\alpha\circ\zeta(n+1)=\beta\circ\zeta(n+1)]$. \\Conclude: $\alpha\sim_V^{almost}\beta$. 
  
  \smallskip
 (II) Now let $R$ in $\mathcal{E}$ be given such that $\forall\alpha\forall\beta[\alpha R\beta\rightarrow \alpha\sim^{almost}_V \beta]$. 
  
  We have to prove: $\forall\alpha\forall\beta[aR^+\beta\rightarrow \alpha\sim^{almost}_V \beta]$.  
  
  We do so as follows.
  
  Let $\alpha, \beta$ be given such that $\alpha R^+\beta$. \\Find $n$ such that $\forall m>n[\alpha(m)\neq \beta(m) \rightarrow \alpha R\beta]$. Let $\zeta$ in $[\omega]^\omega$ be given. Consider $\zeta(n+1)$ and note $\zeta(n+1)>n$. There now are two cases. 
  
  \textit{Either} $\alpha\circ\zeta(n+1)=\beta\circ\zeta(n+1)$ \textit{or} $\alpha\circ\zeta(n+1)\neq\beta\circ\zeta(n+1)$.
  
  In the first case we are done, and in the second case we conclude $\alpha R \beta$, and, using the induction hypothesis, find $p$ such that $\alpha\circ\zeta(p)=\beta\circ\zeta(p)$. 
  
  In both cases we  conclude: $\exists q[\alpha\circ\zeta(q)=\beta\circ\zeta(q)]$.
  
  We thus see: $\forall \zeta \in [\omega]^\omega\exists q[\alpha\circ\zeta(q)=\beta\circ\zeta(q)]$, that is $\alpha\sim^{almost}_V\beta$.
  
  Clearly then: $\forall \alpha\forall\beta[[\alpha R^+\beta\rightarrow \alpha\sim^{almost}_V \beta]$.  
  
  \smallskip
(III)  Now let $R_0, R_1, \ldots$ be an infinite sequence of elements of $\mathcal{E}$ such that, for all $n$, $\forall \alpha\forall\beta[\alpha R_n\beta \rightarrow \alpha\sim_V^{almost}\beta]$. 

Define $R:=\bigcup_n R_n$ and note: $\forall \alpha\forall\beta[\alpha R\beta \rightarrow \alpha\sim_V^{almost}\beta]$.
  
  \smallskip
  (iv) Let $\alpha,\beta$ be given such that $\alpha\sim^{almost}\beta$, that is: \\$\forall \zeta \in [\omega]^\omega\exists n[\alpha\circ\zeta(n)=\beta\circ\zeta(n)]$.\\ Using Axiom \ref{A:barinduction}, we shall prove: there exists $R$ in $\mathcal{E}$ such that $\alpha R\beta$. 
  
 \smallskip Define $B:=\bigcup_k\{s\in [\omega]^{k+1}\mid \alpha\circ s(k)=\beta\circ s(k)\}$ and note: $B$ is a bar in $[\omega]^\omega$, that is: $\forall \zeta \in [\omega]^\omega\exists n[\overline \zeta n \in B]$.

  Define $C:= \bigcup_k\{ s \in [\omega]^{k}\mid \exists n<k[\alpha\circ s(n)=\beta\circ s(n)] \;\vee\;\exists R \in \mathcal{E}[\alpha R \beta]\}$.
  
  Note: $C= \bigcup_k\{ s \in [\omega]^{k}\mid \forall n<k[\alpha\circ s(n)\neq\beta\circ s(n)] \rightarrow\exists R \in \mathcal{E}[\alpha R \beta]\}$.
  
  Note: $B\subseteq C$ and: $C$ is \textit{monotone}, that is:\\$\forall s\in[\omega]^{<\omega}[s \in C \rightarrow \forall n[s\ast\langle n \rangle  \in [\omega]^{<\omega}\rightarrow s\ast\langle n \rangle \in C]]$. 
  
  \smallskip
  We still have to prove that $C$ is what one calls \textit{inductive} or \textit{hereditary}.

  Let $s$ in $[\omega]^{< \omega}$ be given such that $\forall n[s\ast\langle n \rangle  \in [\omega]^{<\omega}\rightarrow s\ast\langle n \rangle \in C]$. \\We want to prove: $s\in C$. 
  
  Find $k$ such that $s\in [\omega]^k$.
  In case $\exists n<k[\alpha\circ s(n) =\beta\circ s(n)]$, $s \in C$ and 
   we are done, so we assume: $\forall n<k[\alpha\circ s(n)\neq\beta\circ s(n)]$.
   
  Find a sequence\footnote{This application of countable choice may be reduced to Axiom \ref{ax:countable choice}. One may define $\mathcal{B}\subseteq \mathcal{N}$ and a \textit{coding} mapping $\alpha\mapsto R_\alpha$ such that $\mathcal{E}=\{R_\alpha\mid\alpha\in \mathcal{B}\}$.  }  $R_0, R_1, \ldots$ of elements of $\mathcal{E}$ such that, for each $n$, if $s\ast\langle n \rangle \in [\omega]^\omega$ and $\alpha(n)\neq \beta (n)$, then $\alpha R_n \beta$. 
  
  Define $R:=(\bigcup_i R_i)^+$ and note: $R\in \mathcal{E}$. 
  
  We claim: $\alpha R\beta$. 
  
  We establish this claim as follows. 
  
 Define $p$ such that, if $k=0$, then $p:=0$ and, if $k>0$, then $p:=s(k-1)+1$. 
   
 Assume: $\exists n\ge p[s\ast \langle \alpha(n)\neq\beta(n)]$ and find $n\ge p$ such that $\alpha(n)\neq \beta(n)$. 
 
 Note: $s\ast\langle n \rangle \in [\omega]^{k+1}$ and $\forall i<k+1[\alpha\circ(s\ast\langle n \rangle)(i)\neq  \beta\circ(s\ast\langle n \rangle)(i)]$ and $s\ast\langle n \rangle \in C$. Conclude: $\alpha R_n\beta$ and $\alpha( \bigcup_i R_i )\beta$.
 
 We thus see: $\forall n\ge p[\alpha(n)\neq \beta(n)\rightarrow\alpha( \bigcup_i R_i )\beta ]$.
 
 Conclude: $\alpha ( \bigcup_i R_i )^+ \beta$, that is: $\alpha R \beta$, and, therefore: $s\in C$. 
   
   We thus see that $C$ is inductive.
 
  \smallskip Using Axiom \ref{A:barinduction}, we conclude: $\langle\;\rangle\in C$, that is: $\exists R\in \mathcal{E}[\alpha R \beta]$.

  \smallskip (v) Let $\alpha,\beta$ be given such that $\alpha\sim^{almost}\beta$, that is: \\$\forall \zeta \in [\omega]^\omega\exists n[\alpha\circ\zeta(n)=\beta\circ\zeta(n)]$.\\ Using Axiom \ref{A:barinduction}, we prove: $\neg\neg\exists p\forall n>p[\alpha(n)=\beta(n)]$. 
  
 \smallskip Define $B:=\bigcup_{k}\{s\in [\omega]^{k+1}\mid \alpha\circ s(k)=\beta\circ s(k)\}$ and note: $B$ is a bar in $[\omega]^\omega$, that is: $\forall \zeta \in [\omega]^\omega\exists n[\overline \zeta n \in B]$. 
  Define \\$C:=\bigcup_k \{ s \in [\omega]^k\mid \exists n<k[\alpha\circ s(n)=\beta\circ s(n)]\;\vee\;\neg\neg\exists p\forall n>p[\alpha(n)=\beta(n)]\}$.
  Note: $C=\bigcup_k \{ s \in [\omega]^k\mid \forall n<k[\alpha\circ s(n)\neq\beta\circ s(n)]\rightarrow\neg\neg\exists p\forall n>p[\alpha(n)=\beta(n)]\}$.
  
  Note: $B\subseteq C$ and $C$ is monotone, that is: \\$\forall s\in[\omega]^{<\omega}[s \in C \rightarrow \forall n[s\ast\langle n \rangle  \in [\omega]^{<\omega}\rightarrow s\ast\langle n \rangle \in C]]$. 
  
  We still have to prove that $C$ is inductive.
  
   Let $s$ in $[\omega]^{< \omega}$ be given such that $\forall n[s\ast\langle n \rangle  \in [\omega]^{<\omega}\rightarrow s\ast\langle n \rangle \in C]]$. \\We want to prove: $s\in C$. 
  
  Find $k$ such that $s\in [\omega]^k$. In case $\exists n<k[\alpha\circ s(n)=\beta\circ s(n)]$, $s\in C$,  and we are done, so we assume 
  $\forall n<k[\alpha\circ s(n)\neq\beta\circ s(n)]$.
  
  Define $q$ such that $q:=0$ if $k=0$ and $q:=s(k-1)$ if $k>0$.
  
   Consider two special cases:
  
  \textit{Case (1):} $\exists n>q[\alpha (n) \neq\beta(n)]$.\\ Find such $n$, note: $s\ast\langle n \rangle \in [\omega]^\omega$ and $\forall i<k+1[\alpha\circ (s\ast\langle n \rangle)(i)\neq\beta\circ (s\ast\langle n \rangle)(i)]$ and $s\ast\langle n \rangle \in C$, and conclude:  $\neg\neg\exists p\forall n>p[\alpha(n)=\beta(n)]$. 
  
  \textit{Case (2):} $\neg\exists n>q[\alpha (n) \neq\beta(n)]$, and, therefore,  $\forall n>q[\alpha(n)=\beta(n)]$. 
  
  In both cases, we find: $\neg\neg\exists p\forall n>p[\alpha(n)=\beta(n)]$. 
   
   Conclude\footnote{Using the scheme: If $P\rightarrow Q$ and $\neg P \rightarrow Q$, then $\neg\neg Q$.}:
   $\neg\neg\exists p\forall n>p[\alpha(n)=\beta(n)]$, and: $s\in C$.
   
   We thus see that $C$ is inductive.
   
   \smallskip Using Axiom \ref{A:barinduction}, we conclude: $\langle\;\rangle \in C$, and, therefore,
   
    $\neg\neg\exists p\forall n>p[\alpha(n)=\beta(n)]$, that is: $\neg\neg(\alpha\sim_V \beta)$. 
  \end{proof}
  
  \begin{corollary}\label{C:unstableextensions} \begin{enumerate}[\upshape (i)] \item $(\mathcal{N}, \sim_V^{\neg\neg})\models \mathsf{\forall x\forall y[\neg\neg(x=y)\rightarrow x=y]}$. \item For each $R$ in $\mathcal{E}$, $(\mathcal{N}, R)\models \mathsf{\forall x\neg\forall y[\neg\neg(x=y)\rightarrow x=y]}$.
  \end{enumerate}\end{corollary}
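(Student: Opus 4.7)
For part (i), a single application of the standard equivalence $\neg\neg\neg P\leftrightarrow\neg P$ suffices: since $\alpha\sim_V^{\neg\neg}\beta$ is by definition $\neg\neg(\alpha\sim_V\beta)$, setting $P:=\alpha\sim_V\beta$ gives $\neg\neg(\alpha\sim_V^{\neg\neg}\beta)\leftrightarrow\neg\neg\neg\neg P\leftrightarrow\neg\neg P\leftrightarrow\alpha\sim_V^{\neg\neg}\beta$, so the implication is trivial. For part (ii), the plan is as follows: fix $R\in\mathcal{E}$ and $\alpha\in\mathcal{N}$, suppose for contradiction that $\forall\beta[\neg\neg(\alpha R\beta)\rightarrow\alpha R\beta]$, and exhibit a fan $\mathcal{F}^\alpha$ with $\forall\beta\in\mathcal{F}^\alpha[\neg\neg(\alpha R\beta)]$ but $\neg\forall\beta\in\mathcal{F}^\alpha[\alpha R\beta]$, which will immediately clash with the stability hypothesis.

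I handle the case $\alpha=\underline 0$ first. By Lemma \ref{L:eestar}, pick $T\in\mathcal{E}^\ast$ with $R\subseteq T$; the proof of Theorem \ref{T:hierarchy} furnishes a fan $\mathcal{F}$---whose witnesses are all of the form $\underline{\overline 0}q\ast\langle 1\rangle\ast\ldots$ and may therefore be taken inside $\mathcal{C}$---with $\mathcal{F}\subseteq Fin_{T^+}$ and $\neg(\mathcal{F}\subseteq Fin_T)$. By Theorem \ref{T:vitalistable}(ii), $T^+\subseteq\;\sim_V^{\neg\neg}$, so $\mathcal{F}\subseteq\mathbf{Fin}^{\neg\neg}$; and since $R\subseteq T$ gives $Fin_R\subseteq Fin_T$, also $\neg(\mathcal{F}\subseteq Fin_R)$. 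For each $\beta\in\mathcal{F}$ we then have $\neg\neg(\beta\sim_V\underline 0)$, hence $\neg\neg(\underline 0 R\beta)$ by $\sim_V\subseteq R$, hence $\underline 0 R\beta$ by stability; therefore $\mathcal{F}\subseteq Fin_R$---a contradiction.

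To extend to arbitrary $\alpha$, I will first prove by an easy induction on $\mathcal{E}$ that every $R\in\mathcal{E}$ depends only on the agreement set $\{n\mid\alpha(n)=\beta(n)\}$ of its arguments: the case $\sim_V$ is immediate, the clause $R\mapsto R^+$ refers only to $\alpha(m)\neq\beta(m)$ and $\alpha R\beta$, and countable unions preserve the property. Granted this, setting $\chi_\beta(n):=0$ when $\beta(n)=\alpha(n)$ and $\chi_\beta(n):=1$ when $\beta(n)=\alpha(n)+1$, the set $\mathcal{F}^\alpha:=\{\beta\mid\forall n[\beta(n)\in\{\alpha(n),\alpha(n)+1\}]\wedge\chi_\beta\in\mathcal{F}\}$ is a fan in bijection with $\mathcal{F}$ under $\beta\mapsto\chi_\beta$, and agreement-pattern invariance yields $\alpha R\beta\leftrightarrow\chi_\beta R\underline 0$ and $\neg\neg(\alpha R\beta)\leftrightarrow\neg\neg(\chi_\beta R\underline 0)$. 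The $\underline 0$-contradiction now runs verbatim on $\mathcal{F}^\alpha$. The main obstacle is this agreement-pattern invariance induction: routine, but essential, without which the transfer from $\underline 0$ to arbitrary $\alpha$ collapses; a minor bookkeeping point is confirming that Theorem \ref{T:hierarchy}'s fan can be taken inside $\mathcal{C}$, which is immediate from inspection of its witnesses.
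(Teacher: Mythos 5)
Your part (i) is exactly the paper's argument. For part (ii) your proof is correct but takes a genuinely different, more explicit route than the paper's. The paper first proves the weaker $\neg\forall\mathsf{x}\forall\mathsf{y}$-version abstractly: global stability of $R$ gives $\sim_V^{\neg\neg}\;\subseteq R$, Theorem \ref{T:vitalistable}(ii) gives $R^+\subseteq\;\sim_V^{\neg\neg}$, hence $R^+\subseteq R$, contradicting Corollary \ref{C:hierarchy}; for the stronger $\forall\mathsf{x}\neg\forall\mathsf{y}$-version it then merely asserts that inspection of earlier proofs yields the case $\mathsf{x}=\underline 0$ and that this ``easily generalizes'' to arbitrary $\alpha$. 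You instead attack the pointwise statement head-on, unwinding Corollary \ref{C:hierarchy} to its source --- the fan of Theorem \ref{T:hierarchy} --- and transporting that fan from $\underline 0$ to $\alpha$ via the agreement-set invariance of every $R$ in $\mathcal{E}$, which is a strengthening of Lemma \ref{L:shiftinvariant} proved by the same induction on $\mathcal{E}$. This buys you an actual proof of the part the paper leaves to the reader, at the cost of two small debts you should discharge explicitly: (a) you pass between $\alpha R\beta$ (the form in which the stability hypothesis is given) and $\beta R\underline 0$ (the form used in $Fin_R$), so you need the easy induction showing that every $R$ in $\mathcal{E}$ is symmetric; (b) the fan of Theorem \ref{T:hierarchy} is, as defined there, not contained in $\mathcal{C}$ (in case (I) the single nonzero value is unbounded), so ``taking it inside $\mathcal{C}$'' really means rerunning the whole induction with the $\mathcal{C}$-restricted fans and checking that both the positive inclusion and the refuting witnesses survive --- or, more cleanly, dropping the $\mathcal{C}$-restriction altogether and transporting by $\gamma\mapsto\alpha+\gamma$ (pointwise sum), which preserves agreement sets with $\alpha$ versus $\underline 0$ without any boundedness assumption. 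With these two points made explicit your argument is complete and, unlike the paper's, fully detailed for the $\forall\mathsf{x}$-quantified claim.
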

  
  \begin{proof} (i) Obvious, as, for any proposition $P$, $\neg\neg\neg\neg P\leftrightarrow \neg\neg P$.
  
  \smallskip (ii) Assume $R\in \mathcal{E}$. 
  
  We first prove: $(\mathcal{N}, R)\models \mathsf{\neg\forall x\forall y[\neg\neg(x=y)\rightarrow x=y]}$.  
  
  Assume  $\forall \alpha\forall \beta[\neg\neg(\alpha R \beta)\rightarrow \alpha R \beta]$. 
  
  Note: $\forall\alpha\forall \beta[\alpha \sim_V\beta\rightarrow\alpha R\beta]$ and, therefore: $\forall\alpha\forall \beta[\neg\neg(\alpha \sim_V\beta)\rightarrow\neg\neg(\alpha R\beta)]$.

  Conclude: $\sim_V^{\neg\neg}\; \subseteq R$.  
  
  By Theorem \ref{T:vitalistable}(ii), $R^+\subseteq \sim_V^{\neg\neg}$, so $R^+\subseteq R$. This contradicts Corollary \ref{C:hierarchy}.
  
  \smallskip
  The stronger statement announced in the Theorem may be proven in a similar way.
  Inspection of he proof of Theorem \ref{T:vitalistable} enables one to conclude: 
  
  $(\mathcal{N}, R)\models \mathsf{\neg \forall y[\neg\neg(x=y)\rightarrow x=y]}[\underline 0]$.  
 One easily generalizes this conclusion to:\\ for each $\alpha$, $(\mathcal{N}, R)\models \mathsf{\neg \forall y[\neg\neg(x=y)\rightarrow x=y]}[\alpha]$.  
 
 Conclude: $(\mathcal{N}, R)\models \mathsf{\forall x\neg\forall y[\neg\neg(x=y)\rightarrow x=y]}$.
  \end{proof}
  
  Markov's Principle has been mentioned in Section \ref{S:spreads}. Markov's Principle is not accepted in intuitionistic mathematics, but the following observation still is of interest. 
  
  \begin{corollary} The following are equivalent. \begin{enumerate}[\upshape (i)]\item Markov's Principle: $\forall\alpha[\neg\neg\exists n[\alpha(n)=0]\rightarrow \exists n[\alpha(n)=0]]$. \item $\sim_V^{\neg\neg}\;\subseteq \;\sim_V^{almost}$. \item $\sim_V^{almost}$ is stable. \end{enumerate}\end{corollary}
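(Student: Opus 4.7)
The plan is to establish the cyclic implications (i)$\Rightarrow$(ii)$\Rightarrow$(iii)$\Rightarrow$(i), using Theorem \ref{T:vitalistable}(v), which already supplies $\sim_V^{almost}\;\subseteq\;\sim_V^{\neg\neg}$, as the ``free'' half on which everything pivots.

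For (i)$\Rightarrow$(ii), I would assume Markov's Principle and $\alpha\sim_V^{\neg\neg}\beta$. Given any $\zeta\in[\omega]^\omega$, the statement $\forall n[\alpha\circ\zeta(n)\neq\beta\circ\zeta(n)]$ is incompatible with $\neg\neg(\alpha\sim_V\beta)$: indeed $\exists N\forall m>N[\alpha(m)=\beta(m)]$ forces some $n$ with $\zeta(n)>N$ and $\alpha\circ\zeta(n)=\beta\circ\zeta(n)$, as $\zeta$ is strictly increasing. Hence $\neg\neg\exists n[\alpha\circ\zeta(n)=\beta\circ\zeta(n)]$, and Markov's Principle applied to the decidable predicate $n\mapsto[\alpha\circ\zeta(n)=\beta\circ\zeta(n)]$ delivers an explicit witness, yielding $\alpha\sim_V^{almost}\beta$.

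For (ii)$\Rightarrow$(iii), I would combine the hypothesis with Theorem \ref{T:vitalistable}(v) to conclude $\sim_V^{almost}\;=\;\sim_V^{\neg\neg}$, and observe that $\sim_V^{\neg\neg}$ is automatically stable since $\neg\neg\neg\neg P\leftrightarrow\neg\neg P$; stability then transports to $\sim_V^{almost}$.

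For (iii)$\Rightarrow$(i), given $\alpha$ with $\neg\neg\exists n[\alpha(n)=0]$, I would encode the search as an instance of $\sim_V^{almost}$. Take $\gamma:=\underline 0$ and define $\delta$ by $\delta(n)=0$ if $\exists k\le n[\alpha(k)=0]$ and $\delta(n)=1$ otherwise. If $\exists n[\alpha(n)=0]$, then $\delta$ is eventually $0$, so $\gamma\sim_V\delta$ and a fortiori $\gamma\sim_V^{almost}\delta$; conversely, $\neg\exists n[\alpha(n)=0]$ gives $\delta=\underline 1$, and $\zeta=\mathrm{id}$ witnesses $\neg(\gamma\sim_V^{almost}\delta)$. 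Hence $\neg\neg\exists n[\alpha(n)=0]$ entails $\neg\neg(\gamma\sim_V^{almost}\delta)$; by stability we obtain $\gamma\sim_V^{almost}\delta$, and specialising to $\zeta=\mathrm{id}$ produces some $n$ with $\delta(n)=0$, hence $k\le n$ with $\alpha(k)=0$. The only non-routine step is the choice of encoding in (iii)$\Rightarrow$(i): one needs a pair whose $\sim_V^{almost}$-relationship is \emph{positively} equivalent to $\exists n[\alpha(n)=0]$, so that stability converts the double-negation search into an actual one. The running-existence definition of $\delta$ is exactly what is needed, since eventual coincidence with $\underline 0$ is secured as soon as $\alpha$ hits $0$ once.
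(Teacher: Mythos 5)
Your proposal is correct and follows essentially the same route as the paper: the implication (i)$\Rightarrow$(ii) via deriving $\neg\neg\exists n[\alpha\circ\zeta(n)=\beta\circ\zeta(n)]$ and discharging the double negation with Markov's Principle, (ii)$\Rightarrow$(iii) by combining with Theorem \ref{T:vitalistable}(v) and the stability of $\sim_V^{\neg\neg}$, and (iii)$\Rightarrow$(i) using exactly the same ``running-existence'' sequence (the paper's $\beta$ with $\beta(m)=0\leftrightarrow\exists n\le m[\alpha(n)=0]$) compared against $\underline 0$ and tested with $\zeta=\mathrm{id}$. The extra observation that $\neg\exists n[\alpha(n)=0]$ forces $\delta=\underline 1$ is not needed but does no harm.
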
 \begin{proof} (i) $\Rightarrow$ (ii).  Assume $\neg\neg (\alpha\sim_V\beta)$, that is $\neg\neg\exists n\forall m>n[\alpha(m)=\beta(m)]$. 
  
  Let $\zeta\in [\omega]^\omega$ be given. 
  
  Assume: $\neg \exists n[\alpha\circ\zeta(n)=\beta\circ\zeta(n)]$. 
  
  Then $\forall n[\zeta(n+1)>n\;\wedge\;\alpha\circ\zeta(n)\neq \beta\circ\zeta(n)]$, so $\forall n\exists m>n[\alpha(m)\neq\beta(m)]$.  Contradiction. 
  
  Conclude: $\neg\neg\exists n[\alpha\circ \zeta(n)=\beta\circ\zeta(n)]$ and,  by Markov's Principle, \\$\exists n[\alpha\circ\zeta(n)=\beta\circ\zeta(n)]$. 
  
  We thus see $\forall\zeta\in [\omega]^\omega \exists n[\alpha\circ\zeta(n)=\beta\circ\zeta(n)]$, that is: $\alpha\sim_V^{almost} \beta$. 
  
  \smallskip (ii) $\Rightarrow$ (iii). By Theorem \ref{T:vitalistable}(v), $\sim_V^{almost}\;\subseteq\; \sim_V^{\neg\neg}$. Therefore: $(\sim_V^{almost})^{\neg\neg}\;\subseteq\; \sim_V^{\neg\neg}$. 
  
  Using (ii), we conclude: $(\sim_V^{almost})^{\neg\neg}\;\subseteq \;\sim_V^{almost}$, that is: $\sim_V^{almost}$ is stable. 
  
  \smallskip (iii) $\Rightarrow$ (i). Let $\alpha$ be given such that $\neg\neg\exists n[\alpha(n)\neq 0]$. 
  
  Define $\beta$ such that $\forall m[\beta(m)=0\leftrightarrow \exists n\le m[\alpha(n)=0]]$.
  
   Note: $\neg\neg(\beta\sim_V\;\underline 0)$ and, therefore: $\neg\neg (\beta\sim_V^{almost}\underline 0)$. 
  
  Conclude, using (iii), $\beta\sim_V^{almost} \underline 0$.
  
  Define $\zeta$ such that $\forall n[\zeta(n)=n]$. 
  
  Find $m$ such that $\beta \circ\zeta(m)=\beta(m)=0$ and, therefore, $\exists n\le m[\alpha(n) =0]$.
  
  We thus see: $\forall\alpha[\neg\neg\exists n[\alpha(n)=0]\rightarrow \exists n[\alpha(n)=0]]$, that is: Markov's Principle. \end{proof}
\section{Equality and equivalence}\label{S: equequiv} We did not succeed in finding a sentence $\psi$ such that $(\mathcal{N}, \sim_V)\models\psi$ and \\$(\mathcal{N}, \sim^\omega_V)\models \neg \psi$.
 We now want to compare the structures  $(\mathcal{N}, =, \sim_V)$ and   \\$(\mathcal{N}, =, \sim_V^\omega)$. We need a first order language with two binary relation symbols: $=$ and $\sim$. The  symbol $=$ will denote the equality relation and the symbol $\sim$ will denote, in the first structure, the relation $\sim_V$ and, in the second structure, the relation $\sim_V^\omega$. The reader hopefully will not be confused by the fact that, in the earlier sections, where we used the first order language with  a single binary relation symbol, $=$,  the symbol $=$ denoted the relations $\sim_V$ and $\sim_V^\omega$.
 
 The next Theorem makes us see that equality is decidable on each equivalence class of $\sim_V$ whereas, on each equivalence class of $\sim_V^\omega$, it is not decidable.
 
 \begin{theorem}\label{T:vitaliomegavitali}$\;$
 
 \begin{enumerate}[\upshape (i)] \item $(\mathcal{N}, =, \sim_V)\models \mathsf{\forall x\forall y[x\sim y\rightarrow (x=y\;\vee\;\neg(x=y))]}$. \item $(\mathcal{N}, =, \sim_V^\omega)\models \mathsf{\forall x\neg\forall y[x\sim y\rightarrow (x=y\;\vee\;\neg(x=y)]}$. \end{enumerate} \end{theorem}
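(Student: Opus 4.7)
The plan is to handle (i) directly from the definition of $\sim_V$, and to handle (ii) by a continuity argument on a spread of $\beta$'s that are provably $\sim_V^\omega$-equivalent to $\gamma$. For (i), given $\alpha, \beta$ with $\alpha \sim_V \beta$, I extract a witness $n$ with $\forall m > n[\alpha(m) = \beta(m)]$. Then $\alpha = \beta$ is equivalent to the decidable finite conjunction $\bigwedge_{m \le n}[\alpha(m) = \beta(m)]$, which immediately gives $\alpha = \beta \vee \neg(\alpha = \beta)$.

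For (ii), I fix $\gamma$ and assume, for contradiction, $\forall \beta[\gamma \sim_V^\omega \beta \rightarrow (\gamma = \beta \vee \neg(\gamma = \beta))]$. The key is to restrict to the spread $\mathcal{F}^\gamma_1$ of all $\alpha$ that differ from $\gamma$ at at most one coordinate, which was already used in the proofs of Theorems \ref{T:vitaliunstable} and \ref{T:vitalivariations}. By the first claim in the proof of Theorem \ref{T:vitalivariations}(iii), every $\alpha \in \mathcal{F}^\gamma_1$ satisfies $\alpha \sim_V^1 \gamma$, and hence $\alpha \sim_V^\omega \gamma$; so the assumption specialises to $\forall \beta \in \mathcal{F}^\gamma_1[\gamma = \beta \vee \neg(\gamma = \beta)]$.

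Applying the disjunction form of Brouwer's Continuity Principle for spreads (Lemma \ref{L:bcpspreads}), I locate $m$ such that either $\forall \beta \in \mathcal{F}^\gamma_1[\overline\gamma m \sqsubset \beta \rightarrow \gamma = \beta]$ or $\forall \beta \in \mathcal{F}^\gamma_1[\overline\gamma m \sqsubset \beta \rightarrow \neg(\gamma = \beta)]$. The first alternative is refuted by the element $\beta \in \mathcal{F}^\gamma_1$ with $\beta(m) := \gamma(m) + 1$ and $\beta(i) := \gamma(i)$ for $i \neq m$, since then $\overline\gamma m \sqsubset \beta$ and $\gamma \# \beta$; the second is refuted by $\beta := \gamma \in \mathcal{F}^\gamma_1$.

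The main conceptual hurdle is the contrast with (i): one must verify that membership in $\mathcal{F}^\gamma_1$ places $\beta$ in the $\sim_V^\omega$-class of $\gamma$ outright, not merely doubly negated, so that the hypothesis of (ii) really applies uniformly on the whole spread. This is exactly the content of the cited claim. The parallel argument for $\sim_V$ in place of $\sim_V^\omega$ would fail at this very point, since $\mathcal{F}^\gamma_1$ is only $\neg\neg$-contained in the $\sim_V$-class of $\gamma$ (as exploited in the proof of Theorem \ref{T:vitaliunstable}), in harmony with (i).
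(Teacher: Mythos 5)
Your proposal is correct and follows essentially the same route as the paper: part (i) by reducing equality on the $\sim_V$-class to a decidable comparison of finite initial segments, and part (ii) by restricting to the spread $\mathcal{F}^\gamma_1$ of sequences differing from $\gamma$ in at most one place, noting via the first claim in the proof of Theorem \ref{T:vitalivariations}(iii) that all its elements are $\sim_V^\omega$-equivalent to $\gamma$ outright, and then refuting both continuity alternatives. Your explicit witnesses for the two alternatives and your closing remark on why the argument must fail for $\sim_V$ itself are accurate and in harmony with the paper.
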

 
 \begin{proof} (i) Let $\gamma, \alpha$ be given such that $\gamma\sim_V\alpha$. \\Find $n$ such that $\forall m>n[\gamma(m)=\alpha(m)]$ and distinguish two cases. \\\textit{Either} $\overline \gamma (m+1)=\overline \alpha(m+1)$ and  $\gamma=\alpha$, \textit{or}  $\overline \gamma (m+1)\neq\overline \alpha(m+1)$ and $\neg(\gamma =\alpha)$. 
 
 Conclude: $\forall \gamma\forall \alpha[\gamma\sim_V\alpha\rightarrow (\gamma =\alpha \;\vee\;\neg(\gamma=\alpha)]$. 
 
 \smallskip (ii) Let $\gamma$ be given.
 
  Consider $\mathcal{F}_1^\gamma := \{\alpha\mid \forall m\forall n[(\alpha(m)\neq\gamma(m)\;\wedge\;\alpha(n)\neq\gamma(n)\rightarrow m=n]\}$.
 
 Note: $\mathcal{F}_1^\gamma$ is a spread. 
 
 Also:  $\forall \alpha \in \mathcal{F}_1^\gamma[\gamma\sim_V^1\alpha]$ \footnote{See the proof of Theorem \ref{T:vitalivariations}(iii)}  and, therefore, $\forall \alpha \in \mathcal{F}_1^\gamma[\gamma\sim_V^\omega \alpha]$. 
 
 Assume  $\forall \alpha\in \mathcal{F}_1^\gamma[\gamma=\alpha\;\vee\;\neg(\gamma=\alpha)]$. Applying Lemma \ref{L:bcpdisj}, find $p$ such that \textit{either} $\forall \alpha\in\mathcal{F}_1^\gamma[ \overline \gamma p\sqsubset \alpha\rightarrow \gamma =\alpha]$ \textit{or} $\forall \alpha[ \overline \gamma p\sqsubset \alpha\rightarrow \neg(\gamma =\alpha)]$, and note that both alternatives are false. 
 
 Conclude: $\forall \gamma\neg\forall \alpha[\gamma\sim_V^\omega \alpha\;\vee\neg(\gamma=\alpha)]$. \end{proof}
 
 \begin{lemma}\label{L:almostperhapsive} $(\sim_V^{\neg\neg})^+\;\subseteq\;\;\sim_V^{\neg\neg}$ and  $(\sim_V^{almost})^+\;\subseteq\;\;\sim_V^{almost}$.\footnote{Following the terminology in \cite{veldman1995}, a binary relation $R$ on $\mathcal{N}$ should be called \textit{perhapsive} if $R^+\subseteq R$.}
 \end{lemma}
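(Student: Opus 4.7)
The plan is to prove both inclusions by the same two-case analysis that is already used in the proof of Theorem~\ref{T:vitalistable}(ii) and (iii). Everything rests on the fact that, for any specific natural numbers $\alpha(m),\beta(m)$, the statement $\alpha(m)=\beta(m)$ is decidable, together with the classical tautology scheme \emph{(if $P\rightarrow Q$ and $\neg P\rightarrow Q$, then $\neg\neg Q$)} for the double-negation case.

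For the first inclusion, I would take $\alpha,\beta$ with $\alpha(\sim_V^{\neg\neg})^+\beta$, find $n$ such that $\forall m>n[\alpha(m)\neq\beta(m)\rightarrow \neg\neg(\alpha\sim_V\beta)]$, and split on whether $\exists m>n[\alpha(m)\neq\beta(m)]$. In the first case, the given implication directly yields $\neg\neg(\alpha\sim_V\beta)$. In the second case, $\forall m>n[\alpha(m)=\beta(m)]$, so $\alpha\sim_V\beta$, and hence $\neg\neg(\alpha\sim_V\beta)$. Since both the hypothesis and its negation yield $\neg\neg(\alpha\sim_V\beta)$, the conclusion $\neg\neg(\alpha\sim_V\beta)$ follows, i.e.\ $\alpha\sim_V^{\neg\neg}\beta$. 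This is structurally identical to step (II) in the proof of Theorem~\ref{T:vitalistable}(ii).

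For the second inclusion, I would take $\alpha,\beta$ with $\alpha(\sim_V^{almost})^+\beta$, find $n$ such that $\forall m>n[\alpha(m)\neq\beta(m)\rightarrow \alpha\sim_V^{almost}\beta]$, and prove $\forall\zeta\in[\omega]^\omega\exists k[\alpha\circ\zeta(k)=\beta\circ\zeta(k)]$. Given $\zeta\in[\omega]^\omega$, the value $\zeta(n+1)$ satisfies $\zeta(n+1)>n$, and we may decide whether $\alpha\circ\zeta(n+1)=\beta\circ\zeta(n+1)$ or not. If yes, take $k:=n+1$ and we are done. If not, then by the hypothesis $\alpha\sim_V^{almost}\beta$, and applying this to the same $\zeta$ yields some $k$ with $\alpha\circ\zeta(k)=\beta\circ\zeta(k)$. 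Either way, such a $k$ exists, so $\alpha\sim_V^{almost}\beta$. This mirrors step (II) in the proof of Theorem~\ref{T:vitalistable}(iii).

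There is no real obstacle here: both arguments are direct, and the only tools used are the decidability of equality on $\omega$ and the elementary logical move that $(P\rightarrow Q)\wedge(\neg P\rightarrow Q)$ implies $\neg\neg Q$ intuitionistically. The mild point to be careful about in the first inclusion is that one cannot, intuitionistically, decide $\exists m>n[\alpha(m)\neq\beta(m)]$ outright; one must instead use the double-negated disjunction to derive $\neg\neg(\alpha\sim_V\beta)$, exactly as in the paper's earlier proof.
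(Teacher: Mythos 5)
Your proof is correct and follows essentially the same route as the paper's: the same case split on $\exists m>n[\alpha(m)\neq\beta(m)]$ with the $(P\rightarrow Q)\wedge(\neg P\rightarrow Q)\rightarrow\neg\neg Q$ scheme and the stability of $\neg\neg$ for the first inclusion, and the same use of $\zeta(n+1)>n$ with a genuine decidable case distinction for the second. The only cosmetic difference is that you unfold $\alpha\sim_V^{\neg\neg}\beta$ to $\neg\neg(\alpha\sim_V\beta)$ throughout, whereas the paper works with $\sim_V^{\neg\neg}$ as a unit and invokes $\neg\neg\neg\neg P\leftrightarrow\neg\neg P$ at the end; these are the same argument.
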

 
 \begin{proof} Assume $\alpha (\sim_V^{\neg\neg})^+ \beta$.
 
  Find $n$ such that $\forall m>n[\alpha(m)\neq \beta(m)\rightarrow \alpha\sim_V^{\neg\neg}\beta]$. 
  
  Note: if $\exists m>n[\alpha(m)\neq \beta(m)]$, then $\alpha\sim_V^{\neg\neg} \beta$, and if $\neg\exists m>n[\alpha(m)\neq \beta(m)]$, then $\forall m>n[\alpha(m)=\beta(m)]$ and $\alpha\sim_V\beta$ and also $\alpha\sim_V^{\neg\neg}\beta$.

Conclude: $\neg\neg(\alpha\sim_V^{\neg\neg}\beta)$, and, therefore, $\alpha\sim_V^{\neg\neg}\beta$.

\medskip Assume $\alpha (\sim_V^{almost})^+ \beta$.
 
  Find $n$ such that $\forall m>n[\alpha(m)\neq \beta(m)\rightarrow \alpha\sim_V^{almost}\beta]$. 
  
  Let $\zeta$ in $[\omega]^\omega$ be given. Note: $\zeta(n+1)>n$. 
  
  \textit{Either}: $\alpha\circ\zeta(n+1)=\beta\circ\zeta(n+1)$ \textit{or}: $\alpha \sim_V^{almost}\beta$ and $\exists p[\alpha\circ\zeta(p)=\beta\circ\zeta(p)]$.
  
  We thus see: $\forall \zeta\in[\omega]^\omega\exists n[\alpha\circ\zeta(n)=\beta\circ\zeta(n)]$, that is: $\alpha\sim_V^{almost}\beta$. \end{proof}
  
  \begin{lemma}\label{L:lastformula} For every shift-invariant binary relation $R$ on $\mathcal{N}$, 
  
  $R^+\subseteq R$ if and only if $(\mathcal{N},R)\models \mathsf{\forall x \forall y[}\bigl(AP(\mathsf{x,y)\rightarrow x\sim y\bigr)\rightarrow x\sim y]}$. \end{lemma}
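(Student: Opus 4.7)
The plan begins with the observation that in the structure $(\mathcal{N}, R)$ — where $=$ retains its intended meaning of literal equality on $\mathcal{N}$ — the formula $AP(\alpha,\beta)$ is, by Theorem \ref{T:apartdef} applied to the spread $\mathcal{N}$ itself, equivalent to the apartness $\alpha\;\#\;\beta$. So the biconditional to prove unfolds as
$$R^+\subseteq R\;\;\Longleftrightarrow\;\;\forall\alpha\forall\beta[(\alpha\;\#\;\beta\rightarrow\alpha R\beta)\rightarrow\alpha R\beta].$$

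For $(\Rightarrow)$ I would not need shift-invariance at all. Assume $R^+\subseteq R$ and fix $\alpha,\beta$ with $\alpha\;\#\;\beta\rightarrow\alpha R\beta$. For every $m$, if $\alpha(m)\neq\beta(m)$ then $\alpha\;\#\;\beta$ holds by definition, so $\alpha R\beta$ by assumption. In particular $\forall m>0[\alpha(m)\neq\beta(m)\rightarrow\alpha R\beta]$, so $\alpha R^+\beta$, and then $R^+\subseteq R$ gives $\alpha R\beta$.

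For $(\Leftarrow)$ the strategy is a shift trick. Assume the formula and let $\alpha R^+\beta$; pick $n$ with $\forall m>n[\alpha(m)\neq\beta(m)\rightarrow\alpha R\beta]$. Set $\alpha':=\alpha\circ S^{n+1}$ and $\beta':=\beta\circ S^{n+1}$. The key intermediate claim is $\alpha'\;\#\;\beta'\rightarrow\alpha' R\beta'$: any witness $m$ to $\alpha'\;\#\;\beta'$ produces $k:=n+1+m>n$ with $\alpha(k)\neq\beta(k)$, hence $\alpha R\beta$, and shift-invariance upgrades this to $\alpha' R\beta'$. Applying the assumed formula to $(\alpha',\beta')$ now yields $\alpha' R\beta'$, and shift-invariance one more time transports the conclusion back to $\alpha R\beta$.

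The whole argument is short; the only real obstacle is recognising that the clause $\forall m>n[\alpha(m)\neq\beta(m)\rightarrow\alpha R\beta]$ defining $R^+$ is \emph{exactly} what becomes the premise of the formula after shifting by $n+1$ positions, so shift-invariance converts the conditional on tails into an unconditional apartness-implies-$R$ hypothesis. No continuity principle or choice axiom is needed; the proof uses only Theorem \ref{T:apartdef} and the defining property of shift-invariance.
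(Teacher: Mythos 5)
Your proposal is correct and follows essentially the same route as the paper: the forward direction instantiates the definition of $R^+$ with $n=0$, and the converse uses exactly the paper's shift trick, passing to the tails $\alpha\circ S^{n+1}$, $\beta\circ S^{n+1}$ and invoking shift-invariance twice. The preliminary identification of $AP[\alpha,\beta]$ with $\alpha\;\#\;\beta$ via Theorem \ref{T:apartdef} is a correct and slightly more explicit justification of a step the paper leaves tacit.
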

  
  \begin{proof} First assume $R^+\subseteq  R$.
  
  Assume $\alpha\;\#\;\beta\rightarrow \alpha R \beta$.
  
  Then: $\forall m>0[\alpha(m)\neq\beta(m)\rightarrow \alpha R\beta]$, so: $\alpha R^+\beta$, and, therefore: $\alpha R\beta$. 
  
  We thus see: $(\mathcal{N},R)\models \mathsf{\forall x \forall y[}\bigl(AP(\mathsf{x,y)\rightarrow x\sim y\bigr)\rightarrow x\sim y]}$. 
  
  \medskip Now assume $(\mathcal{N},R)\models \mathsf{\forall x \forall y[}\bigl(AP(\mathsf{x,y)\rightarrow x\sim y\bigr)\rightarrow x\sim y]}$. 
  
  Assume $\alpha R^+\beta$. Find $n$ such that $\forall m>n[\alpha(m)\neq \beta(m)\rightarrow \alpha R \beta]$. 
  
  Define $\gamma, \delta$ such that $\forall m[\gamma (m)=\alpha(n+1+m)\;\wedge\;\delta(m) =\beta(n+1+m)]$.
  
  Note: $\gamma \;\#\;\delta \rightarrow \alpha R \beta$, and, as
  $R$ is shift-invariant, also: $\gamma\;\#\;\delta \rightarrow \gamma R \delta$, and, therefore: $\gamma R \delta$, and also: $\alpha R\beta$. 
  
  We thus see: $R^+\subseteq R$. \end{proof}
  
  \begin{corollary}\label{C:lastform} \begin{enumerate}[\upshape (i)]\item $(\mathcal{N}, \sim_V^{\neg\neg})\models \mathsf{\forall x \forall y[}\bigl(AP(\mathsf{x,y)\rightarrow x\sim y\bigr)\rightarrow x\sim y]}$.
  
  \item $(\mathcal{N}, \sim_V^{almost})\models \mathsf{\forall x \forall y[}\bigl(AP(\mathsf{x,y)\rightarrow x\sim y\bigr)\rightarrow x\sim y]}$.
  
  \item For each $R$ in $\mathcal{E}$, $(\mathcal{N},R)\models \mathsf{\neg\forall x \forall y[}\bigl(AP(\mathsf{x,y)\rightarrow x\sim y\bigr)\rightarrow x\sim y]}$. \end{enumerate}\end{corollary}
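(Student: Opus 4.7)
My plan is to reduce all three parts to Lemma \ref{L:lastformula}, which characterises satisfaction of the formula $\forall \mathsf{x}\forall \mathsf{y}[(AP(\mathsf{x,y}) \rightarrow \mathsf{x\sim y}) \rightarrow \mathsf{x\sim y}]$ in $(\mathcal{N}, R)$ by the perhapsive closure condition $R^+ \subseteq R$, provided $R$ is shift-invariant. So for each of the three relations at hand I need two things: shift-invariance, and either the inclusion $R^+ \subseteq R$ (for (i) and (ii)) or its failure (for (iii)).

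The perhapsive closure conditions $(\sim_V^{\neg\neg})^+ \subseteq \;\sim_V^{\neg\neg}$ and $(\sim_V^{almost})^+ \subseteq \;\sim_V^{almost}$ are exactly the content of Lemma \ref{L:almostperhapsive}, so what remains to establish (i) and (ii) is shift-invariance of these two relations. For $\sim_V^{\neg\neg}$ shift-invariance is immediate from shift-invariance of $\sim_V$, since double negation respects logical equivalence. For $\sim_V^{almost}$ I would argue by reindexing witnessing sequences in $[\omega]^\omega$: given $\alpha \sim_V^{almost}\beta$ and any $\zeta \in [\omega]^\omega$, applying the hypothesis to $\zeta'$ defined by $\zeta'(n) := \zeta(n)+1$ (still strictly increasing) yields an $n$ with $\alpha\circ\zeta'(n) = \beta\circ\zeta'(n)$, i.e.~$(\alpha\circ S)\circ\zeta(n) = (\beta\circ S)\circ\zeta(n)$; the converse direction uses a mirror-image reindexing from $(\alpha\circ S)\sim_V^{almost}(\beta\circ S)$. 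Lemma \ref{L:lastformula} then delivers (i) and (ii).

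For (iii), every $R \in \mathcal{E}$ is shift-invariant by Lemma \ref{L:shiftinvariant}, and $\neg(R^+ \subseteq R)$ by Corollary \ref{C:hierarchy}. Assume towards a contradiction that $(\mathcal{N}, R) \models \forall \mathsf{x}\forall \mathsf{y}[(AP(\mathsf{x,y}) \rightarrow \mathsf{x\sim y}) \rightarrow \mathsf{x\sim y}]$. Then the forward direction of Lemma \ref{L:lastformula} produces $R^+ \subseteq R$, contradicting Corollary \ref{C:hierarchy}. Hence $(\mathcal{N}, R) \models \neg \forall \mathsf{x}\forall \mathsf{y}[(AP(\mathsf{x,y}) \rightarrow \mathsf{x\sim y}) \rightarrow \mathsf{x\sim y}]$, as required. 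The only delicate point in the whole argument is the shift-invariance verification for $\sim_V^{almost}$, which is not recorded explicitly earlier in the paper but reduces to the routine reindexing sketched above; everything else is a straightforward combination of the named lemma, Lemma \ref{L:almostperhapsive}, Lemma \ref{L:shiftinvariant}, and Corollary \ref{C:hierarchy}.
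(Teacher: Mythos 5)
Your proof is correct and follows essentially the same route as the paper, whose entire proof reads ``Use Lemmas \ref{L:almostperhapsive} and \ref{L:lastformula} and Corollary \ref{C:hierarchy}'' (with Lemma \ref{L:shiftinvariant} implicitly supplying shift-invariance for part (iii)). Your explicit verification of shift-invariance for $\sim_V^{\neg\neg}$ and $\sim_V^{almost}$ is a point the paper silently omits, and your reindexing argument for $\sim_V^{almost}$ is sound; note only that for parts (i) and (ii) one could also bypass it, since the direction of Lemma \ref{L:lastformula} going from $R^+\subseteq R$ to satisfaction of the sentence does not actually use shift-invariance in its proof.
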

  
  \begin{proof}
  Use Lemmas \ref{L:almostperhapsive} and \ref{L:lastformula} and Corollary \ref{C:hierarchy}. \end{proof}
  
\section{Notations and conventions}\label{S:notations}

We use $m,n, \ldots$ as variables over the set $\omega=\mathbb{N}$ of the natural numbers.

\smallskip For every $P\subseteq \mathbb{N}$ such that $\forall n[P(n)\;\vee\;\neg P(n)]$, for all $m$,

$m=\mu n[P(n)]\leftrightarrow \bigl(P(m)\;\wedge\;\forall n<m[\neg P(n)]\bigr)$.

\smallskip
$(m,n)\mapsto J(m,n)$ is a one-to-one surjective mapping from $\omega\times\omega$ onto $\omega$. 

$K,L:\omega\times \omega$ are its inverse functions, so $\forall n[J\bigl(K(n), L(n)\bigr)=n]$.

For each $n$, $n':=K(n)$ and $n'':=L(n)$.

\smallskip
 $(n_0, n_1, \ldots, n_{k-1})\mapsto \langle n_0, n_1, \ldots, n_{k-1}\rangle$ is a one-to-one surjective mapping from the set of finite sequences of natural numbers to the set of the natural numbers. 

$\langle n_0, n_1, \ldots, n_{k-1}\rangle$ is the \textit{code} of the finite sequence $(n_0, n_1, \ldots, n_{k-1})$.  

 $s\mapsto length(s)$ is is the function that, for each $s$, gives the length of the finite sequence coded by $s$.

 $s,n\mapsto s(n)$ is the function that, for all $s,n$, gives the value of the finite sequence coded by $s$ at $n$. If $n\ge length(s)$, then $s(n)=0$.

For all $s,k$, if $length(s)=k$, then $s=\langle s(0), s(1), \ldots s(k-1)\rangle$.

$0=\langle\;\rangle$ codes the empty sequence of natural numbers,\\ the unique finite sequence $s$ such that $length(s)=0$.

$\omega^k:=\{s\mid length(s)=k\}$.

\smallskip $[\omega]^k:=\{s\in \omega^k\mid\forall i[i+1<k\rightarrow s(i)<s(i+1)]\}$. 

\smallskip $[\omega]^{<\omega}:=\bigcup_k[\omega]^k$.

\smallskip For all $s,k,t,l$, if $s\in \omega^k $ and $t\in \omega^l$, then $s\ast t$ is the element $u$ of $\omega^{k+l}$ such that $\forall i<k[u(i)=s(i)]$ and  $\forall j<l[u(k+j)=t(j)]$.

\smallskip $s\sqsubseteq t\leftrightarrow \exists u[s\ast u =t]$.

\smallskip $s\sqsubset t\leftrightarrow (s\sqsubseteq t \;\wedge\;s\neq t)$. 

\medskip
We use $\alpha, \beta, \ldots$ as variables over \textit{Baire space}, the set $\omega^\omega:=\mathcal{N}$ of functions from $\mathbb{N}$ to $\mathbb{N}$.

$(\alpha, n)\mapsto \alpha(n)$ is the function that associates to all $\alpha, n$, the value of $\alpha$ at $n$. 

For all $\alpha, \beta$, $\alpha\circ\beta$ is the element $\gamma$ of $\mathcal{N}$ such that $\forall n[\gamma(n)=\alpha\bigl(\beta(n)\bigr)]$. 

\smallskip $2^\omega:=\mathcal{C}:=\{\alpha\mid\forall n[\alpha(n)<2]\}$ is \textit{Cantor space}. 

For all $\alpha$, for all $k$, for all $s$ in $\omega^k$, $\alpha\circ s$ is the element $t$ of $\omega^k$ satisfying\\ $\forall n<k[t(k)=\alpha\bigl(s(k)\bigr)]$. 

\medskip
For each $s, k$, if $s \in \omega^k$, then, for each $\alpha$, $s\ast\alpha$ is the element $\beta$ of $\mathcal{N}$ such that $\forall i <k[\beta(i)=s(i)]$ and $\forall i[\beta(k+i)=\alpha(i)]$.

\smallskip
For each $s$, for each $\mathcal{X}\subseteq \mathcal{N}$, $s\ast\mathcal{X}:=\{s\ast\alpha\mid\alpha \in \mathcal{X}\}$. 

\smallskip For each $\alpha$, for each $n$, $\alpha^n$ is the element of $\mathcal{N}$ satisfying $\forall m[\alpha^n(m)=\alpha\bigl(J(n,m)\bigr)]$. 

\smallskip For each $m$, $\underline m \in \mathcal{N}$ is the element of $\mathcal{N}$ satisfying $\forall n[\underline m(n)=m]$.

\smallskip $S$ is the element of $\mathcal{N}$ satisfying $\forall n[S(n)=n+1]$.

\smallskip $\forall n[\alpha'(n)=\bigl(\alpha(n)\bigr)'\;\wedge\;\alpha''(n)=\bigl(\alpha(n)\bigr)'']$. 

\smallskip
$ \overline \alpha n :=\langle \alpha(0),\alpha(1), \ldots \alpha(n-1)\rangle$. 

\smallskip
$s\sqsubset\alpha\leftrightarrow \exists n[\overline \alpha n =s]$. 

\smallskip $\alpha\perp\beta \leftrightarrow \alpha\;\#\;\beta\leftrightarrow \exists n[\alpha(n)\neq\beta(n)]$. 

\smallskip $[\omega]^\omega:=\{\zeta\in\mathcal{N}\mid \forall i[\zeta(i)<\zeta(i+1)]\}$.

\smallskip $\mathbb{Q}$, the set of the rationals, may be defined as a subset of $\omega$, with accompanying relations $=_\mathbb{Q}$, $<_\mathbb{Q}$, $\le_\mathbb{Q}$ and operations $+_\mathbb{Q}, -_\mathbb{Q}, \cdot_\mathbb{Q}$.

\smallskip $\mathcal{R}:=\{\alpha\mid \forall n[\alpha' (n)\in \mathbb{Q}\;\wedge\; \alpha''(n)\in \mathbb{Q}]\;\wedge\\\forall n[\alpha'(n)\le_\mathbb{Q}\alpha' (n+1)\le_\mathbb{Q} \alpha''(n+1)\le_\mathbb{Q}\alpha''(n)]\;\wedge\;\forall m\exists n[\alpha''(n)-_\mathbb{Q} \alpha'(n)<_\mathbb{Q} \frac{1}{2^m}]\}$.

\smallskip For all $\alpha, \beta$ in $\mathcal{R}$,

$\alpha<_\mathcal{R}\beta \leftrightarrow \exists n[\alpha''(n)<_\mathbb{Q}\beta'(n)]$ and $\alpha=_\mathcal{R}\beta \leftrightarrow \bigl(\neg (\alpha<_\mathcal{R} \beta)\;\wedge\;\neg(\beta <_\mathcal{R}\alpha)\bigr)$.

\smallskip Operations $+_\mathcal{R}, -_\mathcal{R}$ are defined straightforwardly.

\end{document}